\newcommand{\term}{\emph}
\newcommand{\field}[1]{\mathbb{#1}}
\newcommand{\N}{\mathbb{N}}
\newcommand{\R}{\field{R}}
\newcommand{\extR}{\overline \R}
\newcommand{\norm}[1]{\|#1\|}
\newcommand{\abs}[1]{|#1|}
\newcommand{\adaptabs}[1]{\left|#1\right|}
\newcommand{\inv}[1]{#1^{-1}}
\newcommand{\grad}{\nabla}
\newcommand{\freevar}{\,\boldsymbol\cdot\,}
\newcommand{\Union}\bigcup
\newcommand{\Isect}\bigcap
\newcommand{\union}\cup
\newcommand{\isect}\cap
\newcommand{\bigunion}\bigcup
\newcommand{\bigisect}\bigcap
\newcommand{\defeq}{:=}
\newcommand{\subdiff}{\partial}
\DeclareRobustCommand{\downto}{{{\mathchoice%
            {\rotatebox[origin=c]{-20}{$\to$}}
            {\rotatebox[origin=c]{-20}{$\to$}}
            {\rotatebox[origin=c]{-20}{\scalebox{0.75}{$\to$}}}
            {\rotatebox[origin=c]{-20}{\scalebox{0.6}{$\to$}}}
}}}
\DeclareRobustCommand{\upto}{{{\mathchoice%
            {\rotatebox[origin=c]{20}{$\to$}}
            {\rotatebox[origin=c]{20}{$\to$}}
            {\rotatebox[origin=c]{20}{\scalebox{0.75}{$\to$}}}
            {\rotatebox[origin=c]{20}{\scalebox{0.6}{$\to$}}}
}}}
\DeclareMathOperator*{\argmin}{arg\,min}
\DeclareMathOperator{\closure}{cl}
\DeclareMathOperator{\sign}{sign}
\DeclareMathOperator{\Dom}{dom}
\DeclareMathOperator{\diag}{diag}
\DeclareMathOperator{\reverse}{rev}
\DeclareMathOperator{\Span}{span}
\def \uminusSym{\setbox0=\hbox{$\cup$}\rlap{\hbox
        to\wd0{\hss\raise0.5ex\hbox{$\scriptscriptstyle{-}$}\hss}}\box0}
\newcommand{\iprod}[2]{\langle #1,#2\rangle}
\def\llangle{\langle\kern-3pt\langle}
\def\rrangle{\rangle\kern-3pt\rangle}
\def \weaktostarSym{\setbox0=\hbox{$\rightharpoonup$}\rlap{\hbox
        to\wd0{\hss\raise1ex\hbox{$\scriptscriptstyle{*\,}$}\hss}}\box0}
    \def \weaktostar    {\mathrel{\weaktostarSym}}
\def\linear{\mathbb{L}}
\newcommand{\setto}{\rightrightarrows}
\def\extR{\overline \R}
\def\opt#1{\bar #1}
\def\this#1{#1^k}
\def\nexxt#1{#1^{k+1}}
\def\optu{{\opt{u}}}
\def\optx{{\opt{x}}}
\def\nextu{\nexxt{u}}
\def\nextx{\nexxt{x}}
\def\nextz{\nexxt{z}}
\def\thisu{\this{u}}
\def\thisx{\this{x}}
\def\thisz{\this{z}}
\def\thisy{\this{y}}
\def\thisv{\this{v}}
\def\gap{\mathscr{G}}
\def\thisz{\this{z}}
\def\nextz{\nexxt{z}}
\DeclareMathOperator{\prox}{prox}
\DeclareMathOperator{\supp}{supp}
\def\d{\,d}
\def\dualprod#1#2{\langle #1|#2\rangle}
\newcommand{\Meas}{\mathscr{M}}
\newcommand{\ProbMeas}{\mathscr{P}}
\let\phi=\varphi
\let\epsilon=\varepsilon
\def\Id{\mathop{\mathrm{Id}}}
\def\quasitransportsubdiff{\partial\raise0.25ex\hbox{$\scriptstyle\downarrow$}}
\def\Masses{\Meas(\Omega)}
\def\MassesNonNeg{\Meas_+(\Omega)}
\def\DiscreteMeas{\mathscr{Z}}
\def\DiscreteMasses{\DiscreteMeas(\Omega)}
\def\DiscreteMassesNonNeg{\DiscreteMeas_+(\Omega)}
\def\Wave{\mathscr{D}}
\def\TwoPlansSpace{\Meas(\Omega^2)}
\def\TwoMassesNonNeg{\Meas_+(\Omega^2)}
\def\TwoPlans{\bar\Gamma}
\def\ThreePlansSpace{\Meas(\Omega^3)}
\def\ThreePlansCompatAlg{\Lambda_{01}}
\def\ThreePlansNext{\Lambda^+}
\def\Predual{C_0(\Omega)}
\def\DiffPredual{C_0^1(\Omega)}
\def\PlanPredual{C_0(\Omega^2)}
\def\tritrans{\lambda}
\def\fbmetric#1{V_{#1}}
\def\fbmetricBar#1{V}
\def\fbmetricL#1{V^{0,1}}
\def\fbmetricM#1{\check V}
\def\distsub{\omega}
\def\remainder{r}
\def\UWave{\mathscr{U}}
\def\GQ{J}
\def\extF{\bar F}
\def\extG{\bar G}
\def\extXi{\bar\Xi}
\def\MK{T}
\def\UMK{T}
\def\PPredual{P_{\Meas_*}}
\def\breveq{q}
\def\brevez{z}
\def\curvature{\mathscr{K}}
\def\ellF{\ell_F}
\def\ellCurvature{\ell_{\grad v}}
\def\marginalEnergy#1{\mathscr{E}_{#1}}
\def\constCurvature{C_{\curvature}}
\def\Ctrans{C_r}
\renewrobustcmd{\downto}{{{\mathchoice%
            {\rotatebox[origin=c]{-20}{$\to$}}
            {\rotatebox[origin=c]{-20}{$\to$}}
            {\rotatebox[origin=c]{-20}{\scalebox{0.75}{$\to$}}}
            {\rotatebox[origin=c]{-20}{\scalebox{0.6}{$\to$}}}
}}}
\renewrobustcmd{\upto}{{{\mathchoice%
            {\rotatebox[origin=c]{20}{$\to$}}
            {\rotatebox[origin=c]{20}{$\to$}}
            {\rotatebox[origin=c]{20}{\scalebox{0.75}{$\to$}}}
            {\rotatebox[origin=c]{20}{\scalebox{0.6}{$\to$}}}
}}}
\def\proofstep#1{%
    \par\medskip\noindent%
    \emph{#1:}~%
    \@ifnextchar\par{\@gobble}{}%
}
\let\@the@H@page\relax
\tikzset{external/system call={lualatex \tikzexternalcheckshellescape -halt-on-error -interaction=batchmode -jobname "\image" "\texsource"}}
\colorlet{iitodocolor}{MediumTurquoise!50!white}
\newmdenv[
    backgroundcolor=structure!6,%
    hidealllines=true,%
    innertopmargin=.5em,%
    innerbottommargin=.5em,%
    innerleftmargin=.5em,%
    innerrightmargin=.5em,%
    footnoteinside=false,
    skipabove=2pt,
    skipbelow=2pt,
]{rawalg}
\numberwithin{algorithm}{section}
\author{
    Tuomo Valkonen\thanks{%
        MODEMAT Research Center in Mathematical Modeling and Optimization, Quito, Ecuador;
        Department of Mathematics and Statistics, University of Helsinki, Finland;
        \emph{and}
        Department of Mathematics, Escuela Politécnica Nacional, Quito, Ecuador.
        \email{tuomo.valkonen@iki.fi}, \orcid{0000-0001-6683-3572}}
    }
\title{Point source localisation with unbalanced optimal transport}
\date{2025-02-17 (revised 2026-07-29)}
\begin{document}

\maketitle

\begin{abstract}
    Replacing the quadratic proximal penalty familiar from Hilbert spaces by an unbalanced optimal transport distance, we develop forward-backward type optimisation methods in spaces of Radon measures.
    We avoid the actual computation of the optimal transport distances through the use of transport three-plans and the rough concept of transport subdifferentials.
    The resulting algorithm has a step similar to the sliding heuristics previously introduced for conditional gradient methods, however, now non-heuristically derived from the geometry of the space.
    We demonstrate the improved numerical performance of the approach.
\end{abstract}

\section{Introduction}

We continue the quest started in \cite{tuomov-pointsource} to understand the challenges Banach space geometries pose to the realisation of forward-backward type optimisation methods and the proximal step. We do this by focussing on the point source localisation problem \cite{candes2014towards,lindberg2012mathematical}
\begin{equation}
    \label{eq:intro:problem}
    \min_{\mu \in \Masses}~ \frac{1}{2}\norm{A\mu-b}_Y^2 + \alpha \norm{\mu}_\Meas + \delta_{\ge 0}(\mu),
\end{equation}
where $A \in \linear(\Masses; Y)$ is a bounded, linear forward-operator from the space of Radon measures $\Masses$ on $\Omega \subset \R^n$ to a Hilbert space $Y$ of measurements $b$. The parameter $\alpha > 0$ controls the sparsity of the solution via the Radon norm regularisation term, while the final positivity constraint slightly simplifies the technical details by concentrating on sources only, avoiding sinks.

Our hunch when starting the work on \cite{tuomov-pointsource} was that the weak-$*$ topology would be the natural topology for working with measures and, therefore, any forward-backward method should try to replace the Hilbert-space quadratic penalisation in the forward-backward method
\begin{equation}
    \label{eq:intro:hilbert-alg}
    \nextx \defeq \argmin_{x}~ F(\thisx) + \iprod{\grad F(\thisx)}{x-\thisx} + G(x) + \frac{1}{2\tau}\norm{x-\thisx}^2
\end{equation}
for the minimisation of $F+G$, by a metrisation of weak-$*$ convergence.
Wasserstein distances provide such metrisations \cite{santambrogio2015optimal}, however, we were in the work leading up to \cite{tuomov-pointsource} unable to yet make them work.
Instead, we introduced “particle-to-wave” operators $\Wave \in \linear(\Masses; \Predual)$ from Radon measures to the predual space of continuous functions that vanish at infinity, and defined \mbox{(semi-)}norms and \mbox{(semi-)}inner products with the help of these operators. That way, we could make optimisation in arbitrary normed spaces appear like optimisation in Hilbert spaces. Practically, we took $\Wave$ as a convolution operator, similarly to the construction of Maximum Mean Discrepancy (MMD) norms for optimal transport \cite{sejourne2023unbalanced}.

Now, in this work we finally introduce a forward-backward method based on (conventional, non-MMD) unbalanced optimal transport of measures \cite{schmitzer2019framework,schmitzer2019unbalanced,kondratyev2016new,chizat2018unbalanced,chizat2016interpolating,liero2017optimal,benamou2003numerical,sejourne2023unbalanced}.
Practically, such an optimal transport based algorithm implements the sliding heuristics introduced in \cite{denoyelle2019sliding} for conditional gradient methods \cite{brediespikkarainen2013inverse,denoyelle2019sliding,duval2017sparse,walter2019linear,blank2017extension,bredies2021linear} in a less heuristic way, tied to the geometry of the space.
Our specific flavour of unbalanced optimal transport bears a resemblance to the flat metric of geometric measure theory, however, is based on squared terms for practical reasons.
Our primary variant, which we introduce in \cref{sec:transport-new}, still includes $\Wave$ as the marginal cost between the transport and the source and traget measures.
We will, however, also consider the Radon norm $\norm{\freevar}_\Meas$ as a marginal cost, with weaker convergence guarantees.
Indeed, we will see in \cref{sec:numerical} that numerically the $\Wave$-norm is more effective.
However, it is much easier to prove that the differential of the data term $F$ is Lipschitz with respect to the Radon norm. This is the term $\frac{1}{2}\norm{A\mu-b}^2$ in \eqref{eq:intro:problem}.

\emph{We never actually compute the optimal transport plans $\gamma \in \TwoPlansSpace$ that realise the optimal transport distances.}
Instead, the forward-backward algorithm that we develop in \cref{sec:fb,sec:sub}, is based on the “weaving” of suboptimal \term{three-plans} $\tritrans \in \ThreePlansSpace$ between the current iterate $\this\mu$, the next iterate $\nexxt\mu$, and a comparison point $\opt\mu$, typically a solution to \eqref{eq:intro:problem}.
We also employ \emph{transport subdifferential} ideas adapted from \cite{ambrosio2008gradientflows} and updated to the unbalanced setting.
We briefly review these ideas and then develop relevant subdifferential-type and transport smoothness estimates in \cref{sec:subdiff}.
Before illustrating numerical performance in \cref{sec:numerical}, we also present an extension of our method to product spaces in \cref{sec:extensions}. This allows treating problems with auxiliary variables as well as dual variables in a primal-dual splitting method.
For the weaker subdifferential convergence results of \cref{sec:fb}, we do not assume the convexity of the data term $F$. For the $O(1/N)$ function value convergence results of \cref{sec:sub} we make that assumption.
The algorithm studied in \cref{sec:fb,sec:sub} depends on bounding a remainder term.
There are many ways to do this. We treat some options in \cref{sec:controls}.

Besides the aforementioned conditional gradient methods and \cite{tuomov-pointsource}, other algorithms for \eqref{eq:intro:problem} include \cite{casas2012approximation,casas2013parabolic,flinth2020linear}.
In \cite{chizat2021sparse,chizat2018global} Wasserstein gradient flows are considered for probability measures modelling a discrete set of possible source locations.
A gradient flow of probability measures based on a balanced transport distance very similar to our unbalanced transport distance, has been recently introduced in \cite{gladin2024interactionforcetransportgradientflows}.
Although their theoretical time-discretised numerical method is a cyclic proximal point method, their practically employed numerical method, based on a fixed number of interacting particles, bears parallels to ours, but lacks its finer details, and is presented without convergence proof.
Optimal transport regularised dynamic point source location problems---not algorithms based on optimal transport---are considered in \cite{bredies2020generalized}.
As optimal transport plans can be related to geodesics with respect to a Wasserstein distance, our overall approach can also be related to optimisation methods on manifolds \cite{bergmann2019fenchel,udriste1994convex} and general spaces \cite{luke2021convergence}.
Our work could also be combined with \cite{tuomov2024tracking} for single-loop PDE-constrained and bilevel optimisation with measures.

\subsection*{Notation and elementary results}

We denote the extended reals by $\extR \defeq [-\infty,\infty]$.
We use the notation $\abs{\freevar}_p$ (and $\abs{\freevar}$ for $p=2$) to distinguish $p$-norms in $\R^n$ from norms on other, usually infinite-dimensional spaces.

The space of finite Radon measures on a locally compact Borel measurable set $\Omega \subset \R^n$ we denote by $\Masses$, while $\MassesNonNeg$ stands for the non-negative Radon measures.
We write $\norm{\freevar}_{\Masses}$ or, for short, $\norm{\freevar}_\Meas$ for the Radon norm.
The subset $\ProbMeas(\Omega) \subset \Masses$ consists of probability measures, and the subspace $\DiscreteMasses \subset \Masses$ of \term{discrete measures} $\mu=\sum_{k=1}^n \alpha_k \delta_{x_k}$ for any $n \in \N$, where the weights $\alpha_k \in \R$, and locations $x_k \in \Omega$.
Here, $\delta_x$ is the Dirac measure with mass one at the point $x \in \Omega$.

For a $\mu, \nu \in \Masses$, if $\mu$ is absolutely continuous with respect to $\nu$, denoted $\mu \ll \nu$, we write $\defeq d\mu/d\nu$ for the Radon--Nikodym derivative of $\mu$ respect to $\nu$.
In particular, $\sign \mu \defeq d\mu/d\abs{\mu}$.
We write $\mu^\pm \in \MassesNonNeg$ for the positive and negative parts of $\mu=\mu^+-\mu^-$.
The support is defined as $\supp\mu \defeq \supp\mu^+ \union \supp \mu^-$, where, for $\nu \in \MassesNonNeg$, $\supp \nu \defeq \closure \{ A \text{ measureable} \mid \nu(A) > 0\}$.
If $\mu=\sum_{i=1}^n \beta_i \delta_{x_i}$, we have $\supp\mu=\{x_i \mid \beta_i \ne 0,\, i=1,\ldots,n\}$.

For a $\mu$-integrable functions $\rho$, we write $[\rho * \mu](x) \defeq \int \rho(x-y) d\mu(y)$ for the convolution.

With $F:X \to R$  Fréchet differentiable on a normed space $X$, we write $F'(x) \in X^*$ for the Fréchet derivative at $x \in X$.
Here $X^*$ is the dual space to $X$.
We call $F$ \term{pre-differentiable} if $F'(x) \in X_*$ for $X_*$ a designated \term{predual space} of $X$, satisfying $X=(X_*)^*$.
We have $(X_*)^{**}=X^*$, so $X_*$ canonically injects into $X^*$.
For a convex function $F: X \to \extR$, we write $\subdiff F: X \setto X_*$ for the (set-valued) pre-subdifferential map, defined as $\subdiff F(x) = \{ x_*  \in X_* \mid F(\tilde x) - F(x) \ge \dualprod{x_*}{\tilde x - x} \text{ for all } \tilde x \in X\}$.
We write $\delta_C: X \to \extR$ for the $\{0,\infty\}$-valued indicator function of a set $C \subset X$.

We write $\iprod{x}{x'}$ for the inner product between two elements $x$ and $x'$ of a Hilbert space $X$, and $\dualprod{x^*}{x} \defeq \dualprod{x^*}{x}_{X^*,X} \defeq x^*(x)$ for the duality pairing between elements of a normed space $X$ and its dual or predual $X^*$.
The space $\linear(X; Y)$ stands for bounded linear operators between two vector spaces $X$ and $Y$.
The identity operator is $\Id \in \linear(X; X)$.
With $Y$ a Hilbert space for simplicity, we call $A \in \linear(X; Y)$ \term{pre-adjointable} if there exists a \term{pre-adjoint} $A_*: \linear(X_*; Y)$ whose mixed Hilbert–Banach adjoint satisfies $(A_*)^* = A$.
In other words $\dualprod{x}{A_*z} = \iprod{Ax}{z}$ for all $z \in Y$ and $x \in X$.

A predual of $\Masses$ is the space $C_c(\Omega)$ of continuous functions with compact support.
We write $C_0(\Omega)$ for continuous functions on $\Omega$ that vanish at infinity, which is also a predual space of $\Masses$ \cite[Theorem 1.200]{fonseca2007mmc}.
Moreovr, the space
\[
    \DiffPredual \defeq \{ f \in C_0(\Omega) \mid f \text{ is Fréchet differentiable with } f' \in C_0(\Omega; \R^n)\}
\]
is a dense subspace of $C_0(\Omega)$, as seen by taking convolutions with a differentiable bump function.

For a map $f: X \to Y$ and measure $\mu$ on $X$, we define the \term{push-forward} measure $f_\# \mu$ by
\[
    [f_\# \mu](A) \defeq \mu(\{x \in X \mid f(x) \in A\})
    \quad (A \subset Y \text{ measurable} ).
\]
We will typically take $f$ as a projection $\pi^{i_1,\ldots,i_k}(x_1,\ldots,x_n) \defeq (x_{i_1}, \ldots, x_{i_k})$, the diagonal constructor $\diag(x) \defeq (x,x)$, or the reversal $\reverse(x, y) = (y, x)$. Then, for $\tritrans \in \Meas(\Omega^n)$,
\[
    \pi_\#^{i_1,\ldots,i_k}\tritrans(A)=\tritrans(\{x \in \Omega^n \mid (x_{i_1}, \ldots, x_{i_k}) \in A\})
    \quad (A \subset \Omega^k \text{ measurable}).
\]
whereas, for $\mu \in \Masses$,
\[
    \diag_\#\mu(A) = \mu(\{x \in \Omega \mid (x, x) \in A\})
    \quad (A \subset \Omega^2 \text{ measurable}).
\]

Finally, for any differentiable function $w: \Omega \to \extR$ on a set $\Omega \subset X$, we define the Bregman divergence generated by $w$ as
\[
    B_w(x, y) \defeq w(y) - w(x) + \dualprod{w'(x)}{y-x}.
\]
If $w$ is convex and non-differentiable, we indicate the choice of  $x^* \in \subdiff w(x)$, explicitly in $B_w^v(x, y) \defeq w(y) - w(x) + \dualprod{x^*}{y-x}$. In this case, $B_v^w \ge 0$.

\section{An approach to unbalanced transport}
\label{sec:transport-new}

We now introduce our approach to unbalanced optimal transport.
We start by recalling basic definitions of balanced optimal transport in \cref{sec:transport-new:basic}.
We then define our variant of unbalanced optimal transport in \cref{sec:transport-new:definitions}, and study metrisation of weak-$*$ convergence in \cref{sec:transport-new:weakstar}.

\subsection{Balanced optimal transport theory}
\label{sec:transport-new:basic}

Balanced optimal transport theory treats the cost of transporting the total mass of one probability measure to another \cite{santambrogio2015optimal}.
Let $c: \Omega^2 \to [0, \infty)$ be some symmetric \term{transport cost} that models the cost of transporting a unit mass between two points of $\Omega \subset \R^n$.
For example, $c=c_p$ for some $p \ge 1$ and
\[
    c_p(x, y) \defeq \frac{1}{p}\abs{x-y}_p^p.
\]
Given $\gamma \in \TwoMassesNonNeg$, we then define the Monge–Kantorovich \term{transport cost} between $\mu, \nu \in \MassesNonNeg$ as
\[
	\MK_c(\mu, \nu) \defeq \inf_{\gamma \in \Gamma(\mu, \nu)} \int_{\Omega^2} c(x, y) \d\gamma(x, y)
\]
over the set of \term{transport plans}
$
    \Gamma(\mu, \nu) \defeq \{ \gamma \in \TwoMassesNonNeg \mid \pi^0_\#\gamma=\mu,\, \pi^1_\#\gamma=\nu\}.
$
These plans indicate the direct paths from $x$ to $y$, along which mass is transported, and the corresponding amount. They are restricted to transport all mass of $\mu$ exactly to $\nu$. Indeed, $\Gamma(\mu, \nu)=\emptyset$ if $\norm{\mu}_\Meas \ne \norm{\nu}_\Meas$.

The $p$-Wasserstein distance is now defined through
\[
    W_p(\mu, \nu) \defeq \MK_{c_p}(\mu,\nu)^{1/p}.
\]
Restricted to probability measures, this distance metricises the topology of weak-$*$ convergence.
For more details, we refer to \cite{santambrogio2015optimal}.
If we want to use $W_p$ to measure the distance between two iterates $\this\mu$ and $\nexxt\mu$ in an optimisation algorithm, such as replacing the quadratic penalty in \eqref{eq:intro:hilbert-alg}, we are limited to working with probability measures.
Therefore, we need a theory of unbalanced optimal transport that does not impose this restriction of equal masses. Such theories have been introduced in, e.g., \cite{schmitzer2019framework,schmitzer2019unbalanced,kondratyev2016new,chizat2018unbalanced,chizat2016interpolating,liero2017optimal,benamou2003numerical,sejourne2023unbalanced}.

\subsection{Basic definitions}
\label{sec:transport-new:definitions}

Let $c$ be as above, and let $E: \Masses^2 \to [0, \infty)$ be some (possibly nonsymmetric) but convex and lower semicontinuous \term{marginal cost}.
For $\mu_0,\mu_1 \in \Masses$, also let $\TwoPlans(\mu_0,\mu_1) \subset \TwoPlansSpace$ be a closed and convex set, such that $0 \in \TwoPlans(\mu_0,\mu_1)$.
We call it the set of admissible (unbalanced) transport plans between these two measures.
We then define the \term{unbalanced transport cost}\footnote{We do not call this a “distance”, since we will concentrate on $c(x,y)=c_2(x,y)=\frac{1}{2}\abs{x-y}^2$ without taking the corresponding square root of the integral.}
between $\mu_0$ and $\mu_1$ as
\begin{gather*}
    \UMK_{c,E,\TwoPlans}(\mu_0, \mu_1) \defeq \inf_{\gamma \in \TwoPlans(\mu_0,\mu_1)} V_{c,E}(\mu_0,\mu_1; \gamma),
\shortintertext{where}
    V_{c,E}(\mu_0,\mu_1; \gamma)
    \defeq
    \int_{\Omega^2} c(x, y) \d \abs{\gamma}(x, y)
    +
    E(\mu_0 - \pi_\#^0 \gamma, \mu_1 - \pi_\#^1 \gamma).
\end{gather*}

\paragraph{The marginal cost}

We consider the squared Radon norm
\begin{equation}
    \label{eq:unbalanced:emeas}
    E_\Meas(\nu_0, \nu_1) \defeq \frac{1}{2}\norm{\nu_1-\nu_0}_\Meas^2
\end{equation}
as well as Bregman divergences
\begin{equation}
    \label{eq:unbalanced:ep}
    E_J(\nu_0, \nu_1) = B_J^\omega(\nu_0, \nu_1)
    \defeq
    \begin{cases}
        J(\nu_1)-J(\nu_0) - \dualprod{\omega}{\nu_1-\nu_0}, & \omega \in \subdiff J(\nu_0), \\
        \infty, & \omega \not\in \subdiff J(\nu_0),
    \end{cases}
\end{equation}
for some convex, proper, and weakly-$*$ lower semicontinuous $J: \Meas(\Omega) \to \extR$.
If $J$ is differentiable, we simply write $B_J(\nu_0, \nu_1) \defeq B_J^{J'(\nu_0)}(\nu_0, \nu_1)$.

In particular, following \cite{tuomov-pointsource}, let $\Wave \in \linear(\Masses; \Predual)$ be self-adjoint and positive semi-definite, i.e.,
\[
    \dualprod{\Wave x}{y}_{\Predual,\Masses} = \dualprod{x}{\Wave y}_{\Predual,\Masses}
    \quad\text{and}\quad
    \dualprod{\Wave x}{x}_{\Predual,\Masses} \ge 0
    \quad\text{for all}\quad
    x, y \in \Masses.
\]
Then the semi-inner product and semi-norm
\[
    \iprod{x}{x}_{\Wave} \defeq \dualprod{\Wave x}{x}_{\Predual,\Masses}
    \quad\text{and}\quad
    \norm{x}_{\Wave} \defeq \sqrt{\iprod{x}{x}_{\Wave}}
\]
are well-defined \cite{tuomov-pointsource}.
We may, for example, take $\Wave\mu = \rho * \mu$ for a symmetric and self-adjoint convolution kernel $\rho$.
For $J_\Wave \defeq \frac{1}{2}\norm{\freevar}_\Wave^2$, we then have
\[
    E_\Wave(\nu_0, \nu_1) \defeq B_{J_\Wave}(\nu_0, \nu_1) = \frac{1}{2}\norm{\nu_1-\nu_0}_\Wave^2.
\]

For both $\star=\Meas,\Wave$, we have
\begin{equation}
    \label{eq:unbalanced:v-star}
    V_{c,E_\star} (\mu_0, \mu_1; \gamma)
    =
    \int_{\Omega^2} c(x, y) \d \abs{\gamma}(x, y)
    +
    \frac{1}{2}\norm{\mu_1 - \mu_0 - (\pi_\#^1-\pi_\#^0)\gamma}_\star^2.
\end{equation}
These are invariant with respect to diagonal additions ($\diag_\#\mu$ for a $\mu \in \Masses$) to the transport $\gamma$.

\begin{remark}[Flat metrics]
    The marginal term in \eqref{eq:unbalanced:v-star} makes $\UMK_{c,E,\TwoPlans}$ reminiscent of the flat metric in the theory of currents \cite{federer1969gmt,morgan2000gmt}.
    However, it is squared in our approach.
\end{remark}

\paragraph{The set of admissible plans}

For $c(x, y)=\norm{x-y}$, we can take $\TwoPlans(\mu_0,\mu_1) = \TwoPlansSpace$.
However, if, for example, $c(x,y)=c_2(x,y)=\norm{x-y}_2^2/2$, as we will be using from \cref{sec:fb} onwards, $\TwoPlans(\mu_0,\mu_1)$ needs to impose further restrictions for $\UMK_{c,E,\TwoPlans}(\mu_0, \mu_1)$ to be meaningful:

\begin{example}In $\Omega=\R$, let $\mu_0=\delta_0$ and $\mu_1=\delta_1$.
    Take $\gamma^n=\sum_{k=0}^{n-1} \delta_{(k/n, (k+1)/n)} \in \Meas(\R^2)$.
    Then $(\pi_\#^1-\pi_\#^0)\gamma^n = \delta_1-\delta_0$.
    Thus, for both $\star=\Meas,\Wave$, we have $E_\star(\mu_0-\pi_\#^0\gamma^n, \mu_1-\pi_\#^1\gamma^n)=0$.
    On the other hand
    \[
        \int_{\Omega^2} \abs{x-y} \d\gamma^n(x, y) = 1,
        \quad\text{but}\quad
        \int_{\Omega^2} \abs{x-y}^2 \d\gamma^n(x, y) = \frac{1}{n}.
    \]
    Therefore, $\UMK_{c_1,E,\TwoPlans}(\delta_0, \delta_1)=1$, but $\UMK_{c_2,E,\TwoPlans}(\delta_0, \delta_1)=0$.
\end{example}

To avoid this issue, we can take, for example
\[
    \TwoPlans(\mu_0, \mu_1) \defeq \{
        \gamma \in \Meas(\Omega^2)
        \mid
        \pi_\#^0 \gamma \ll \abs{\mu_0},\, \pi_\#^1 \gamma \ll \abs{\mu_1}
        \text{ and }
        \norm{\gamma} \le \norm{\mu_0} + \norm{\mu_1}
    \}.
\]
The norm bound is not required for the previous example, however, when it is included, the next lemma establishes the existence of a minimising transport plan.
Such existence is, however, not critical to us, as we will mainly be working with $V_{c,E}$ and explicit, possibly suboptimal, transport plans.

\begin{lemma}\label{lemma:unbalanced:existence}
    Let $\mu_0, \mu_1 \in \Masses$.
    Suppose $0 \le  c \in \PlanPredual$; that $E: \TwoPlansSpace \to [0, \infty)$ is convex and lower semicontinuous; and that $0 \in \TwoPlans(\mu_0, \mu_1) \subset \TwoPlansSpace$ is convex, closed, and bounded.
    Then there exists $\gamma \in \TwoPlans(\mu_0,\mu_1)$ such that $\UMK_{c,E,\TwoPlans}(\mu_0, \mu_1)=V_{c,E}(\mu_0, \mu_1;\gamma)$.
\end{lemma}

\begin{proof}Let $F(\gamma) \defeq V_{c,E}(\mu_0,\mu_1; \freevar) + \delta_{\TwoPlans(\mu_0, \mu_1)}$.
    By assumption, $0 \in \Dom F$, so $F$ is proper.
    By the boundedness of $\TwoPlans(\mu_0, \mu_1)$, it is also coercive.
    It is also lower semicontinuous by the closedness of $\TwoPlans(\mu_0, \mu_1)$, the assumed lower semicontinuity of $E$, and the linearity of the transport term.
    Now the convexity of $F$ and the direct method of the calculus of variations establishes the existence of a minimiser of $F$; see, e.g., \cite[Lemma 1.20, Theorem 2.1, and Remark 2.2]{clasonvalkonen2020nonsmooth}.
\end{proof}

\subsection{\texorpdfstring{Weak-$*$ convergence}{Weak-star convergence}}
\label{sec:transport-new:weakstar}

We next characterise weak-$*$ convergence via the convergence of the unbalanced transport costs $\UMK_{c,E,\TwoPlans}$. Our first lemma shows that weak-$*$ convergence implies the convergence of the unbalanced transport costs when the marginal cost has this property.

\begin{lemma}\label{lemma:unbalanced:weak-to-w}
    Suppose $0 \le c$, $0 \le E$, and $0 \in \TwoPlans(\nu,\mu)$ for all $\nu,\mu \in \Masses$.
    If $\this\mu-\this\nu \weaktostar 0$ implies $E(\this\nu, \this\mu) \to 0$, then $\this\mu-\this\nu \weaktostar 0$ implies $\UMK_{c,E,\TwoPlans}(\this\nu, \this\mu) \to 0$.
\end{lemma}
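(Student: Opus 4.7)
The plan is to exhibit a concrete admissible three-, actually two-plan whose cost already tends to zero, and then appeal to sandwiching by nonnegativity. Specifically, the zero measure $\gamma = 0 \in \TwoPlans$ is always admissible in the infimum defining $\UMK_{c,E}$, and for this choice all the transport mass vanishes, so the expression $V_{c,E}(\this\nu, \this\mu; 0)$ collapses to the pure marginal term.

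The steps I would carry out are as follows. First, observe that since $0 \le c$ and $0 \le E$, the integrand and the marginal term in $V_{c,E}$ are both nonnegative, so $\UMK_{c,E}(\this\nu, \this\mu) \ge 0$ for every pair of iterates. Second, plug in $\gamma = 0$: then $\abs{\gamma} = 0$ so $\int_{\Omega^2} c \d\abs{\gamma} = 0$, and $\pi^0_\# \gamma = \pi^1_\# \gamma = 0$, so the marginal-defect measures reduce to $\this\nu$ and $\this\mu$ themselves. Hence
\[
    V_{c,E}(\this\nu, \this\mu; 0) = E(\this\nu, \this\mu).
\]
Third, combining the two bounds with the definition of $\UMK_{c,E}$ as an infimum over all plans gives
\[
    0 \le \UMK_{c,E}(\this\nu, \this\mu) \le V_{c,E}(\this\nu, \this\mu; 0) = E(\this\nu, \this\mu).
\]
Finally, apply the hypothesis that $\this\mu - \this\nu \weaktostar 0$ forces $E(\this\nu, \this\mu) \to 0$, and squeeze.

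Honestly there is no real obstacle here; the only thing to be careful about is noting that the infimum is taken over all signed transport plans $\gamma \in \TwoPlans$, not only nonnegative ones, so $\gamma = 0$ is indeed a legitimate competitor (and the definition of $V_{c,E}$ uses $\abs{\gamma}$ precisely so that taking $\gamma = 0$ makes the cost term vanish). One might also note explicitly that this proof does not require anti-diagonal coercivity of $E$ or existence of a minimiser (as provided by \cref{lemma:unbalanced:existence}); the bound holds for the infimum directly, regardless of whether it is attained.
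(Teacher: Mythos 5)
Your proposal is correct and is essentially identical to the paper's own proof: both bound $0 \le \UMK_{c,E}(\this\nu, \this\mu) \le V_{c,E}(\this\nu, \this\mu; 0) = E(\this\nu, \this\mu)$ and conclude by the hypothesis on $E$ and squeezing. No gaps.
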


\begin{proof}Let $\{(\this\mu,\this\nu)\}_{k \in \N}$ be such that  $\this\mu-\this\nu \weaktostar 0$.
    Then $E(\this\nu,\this\mu) \to 0$.
    Since
    $
        0 \le \UMK_{c,E,\TwoPlans}(\this\nu, \this\mu) \le V_{c,E}(\this\nu, \this\mu; 0) = E(\this\nu, \this\mu),
    $
    it follows, as claimed, that $\UMK_{c,E,\TwoPlans}(\this\nu, \this\mu) \to 0$.
\end{proof}

The opposite implication requires stronger additional assumptions.

\begin{lemma}\label{lemma:unbalanced:w-to-weak}
    Let $\Omega \subset \R^n$ be a bounded set.
    Suppose $\epsilon \norm{x-y}^p \le c(x, y)$ for some $\epsilon>0$ and $p>1$ for all $x, y \in \Omega$; $0 \le E$; and that $E(\this\nu,\this\mu) \to 0$ (with $\{(\this\nu, \this\mu)\}_{k \in \N}$ bounded) implies $\this\mu-\this\nu \weaktostar 0$.
    Then $\UMK_{c,E,\TwoPlans}(\this\nu, \this\mu) \to 0$ (resp.~with $\{(\this\nu, \this\mu)\}_{k \in \N}$ bounded) implies $\this\mu - \this\nu \weaktostar 0$.
\end{lemma}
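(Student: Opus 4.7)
The plan is to select a near-optimal transport plan $\this\gamma$ for each $k$, decompose the difference $\this\mu - \this\nu$ into a piece governed by the marginal cost $E$ and a piece governed by the transport cost $c$, and drive each to zero in the weak-$*$ sense. Concretely, for each $k$ I would pick $\this\gamma \in \TwoPlans$ with $V_{c,E}(\this\nu, \this\mu; \this\gamma) \le \UMK_{c,E}(\this\nu, \this\mu) + 1/k$; since both summands in $V_{c,E}$ are non-negative and $\UMK_{c,E} \to 0$, this yields simultaneously $\int c \d|\this\gamma| \to 0$ and $E(\this\nu - \pi^0_\# \this\gamma, \this\mu - \pi^1_\# \this\gamma) \to 0$. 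Following the diagonal-adjustment argument used in the proof of \cref{lemma:unbalanced:existence} (modifying $\this\gamma$ by $\diag_\# \this\theta$, which leaves $V_{c,E}$ invariant), I would further arrange that $\{\this\gamma\}_{k \in \N}$ is bounded in Radon norm.

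For any $\phi \in \DiffPredual$---which is globally Lipschitz with constant $L \defeq \norm{\phi'}_\infty$---I would then use the identity
\[
    \int \phi \d(\this\mu - \this\nu)
    = \int \phi \d\bigl[(\this\mu - \pi^1_\# \this\gamma) - (\this\nu - \pi^0_\# \this\gamma)\bigr]
    + \int_{\Omega^2} (\phi(y) - \phi(x)) \d\this\gamma(x, y).
\]
The first term tends to zero: the pair $(\this\nu - \pi^0_\# \this\gamma, \this\mu - \pi^1_\# \this\gamma)$ is bounded in $\Masses^2$ (from the Radon bounds on $\this\nu$, $\this\mu$, and $\this\gamma$) with $E$ on it tending to zero, so the standing hypothesis on $E$ forces the difference to converge weak-$*$ to zero, and hence its pairing with $\phi$ to zero. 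For the second term, the Lipschitzness of $\phi$, the lower bound $\epsilon \norm{y - x}^p \le c(x, y)$, and H\"older's inequality with conjugate exponents $p$ and $p/(p-1)$ give
\[
    \adaptabs{\int_{\Omega^2} (\phi(y) - \phi(x)) \d\this\gamma(x, y)}
    \le L \int \norm{y - x} \d|\this\gamma|
    \le L \epsilon^{-1/p} \Bigl(\int c \d|\this\gamma|\Bigr)^{1/p} \norm{\this\gamma}_\Meas^{(p-1)/p} \to 0.
\]

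Combining, $\int \phi \d(\this\mu - \this\nu) \to 0$ for every $\phi \in \DiffPredual$. Density of $\DiffPredual$ in $C_0(\Omega)$ together with the norm-boundedness of $\{\this\mu - \this\nu\}_{k \in \N}$ then extends the convergence to all $\phi \in C_0(\Omega)$, establishing $\this\mu - \this\nu \weaktostar 0$. The subtle step is the boundedness of $\{\this\gamma\}_{k \in \N}$: the diagonal-adjustment trick only absorbs mass exactly on the diagonal, so one must argue that near-minimisers can be chosen without large total variation concentrated just off the diagonal, a delicate point that I would handle in the spirit of \cref{lemma:unbalanced:existence}.
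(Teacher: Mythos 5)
Your proposal follows essentially the same route as the paper's proof: choose near-minimising plans $\this\gamma$ so that both the transport term and the marginal term of $V_{c,E}$ tend to zero, kill the transport contribution $(\pi_\#^1-\pi_\#^0)\this\gamma$ against Lipschitz test functions via H\"older's inequality and the lower bound $\epsilon\norm{x-y}^p \le c(x,y)$, use the standing hypothesis on $E$ for the marginal remainder, and conclude by density of smooth functions in $C_0(\Omega)$. The boundedness of $\norm{\this\gamma}_{\TwoPlans}$ that you flag as delicate is also needed (implicitly, through the factor $\norm{\this\gamma}_{\TwoPlans}^{1-1/p}$ in the H\"older estimate) in the paper's own argument, so your explicit attention to that point is, if anything, a refinement rather than a deviation.
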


\begin{proof}Let $\{(\this\mu,\this\nu)\}_{k \in \N}$ be bounded with $\UMK_{c,E,\TwoPlans}(\this\nu, \this\mu) \to 0$.
    Then for some $\this\gamma \in \TwoPlans(\this\mu,\this\nu)$, we have
    \begin{equation}
        \label{eq:unbalanced:w-to-weakstar-e:1}
        E(\this\nu-\pi_\#^0\this\gamma, \this\mu-\pi_\#^1\this\gamma) \to 0
        \quad\text{and}\quad
        \int_{\Omega^2} c d\abs{\this\gamma} \to 0.
    \end{equation}
    Let $\phi \in \Predual$. Since $\Omega$ is bounded, $\phi$ can be extended to a continuous function on the closure of $\Omega$.
    Because the latter is a compact set, $\phi$ is Lipschitz on it with some factor $L_\phi$.
    By Hölder's inequality,
    \[
        \begin{split}
        \abs{\dualprod{(\pi_\#^1-\pi_\#^0)\this\gamma}{\phi}}
        &
        =
        \adaptabs{\int_{\Omega^2} \phi(y)-\phi(x) \d\this\gamma(x,y)}
        \le
        L_\phi \int_{\Omega^2} \norm{x-y} d\abs{\this\gamma}(x, y)
        \\
        &
        \le
        L_\phi
        \norm{\this\gamma}_{\TwoPlans}^{1-1/p}
        \left(\int_{\Omega^2} \norm{x-y}^p d\abs{\this\gamma}(x, y)\right)^{1/p}
        \\
        &
        \le
        L_\phi \epsilon^{-1/p}
        \norm{\this\gamma}_{\TwoPlans}^{1-1/p}
        \left(\int_{\Omega^2} c(x, y) d\abs{\this\gamma}(x, y)\right)^{1/p}.
        \end{split}
    \]
    By the second part of \eqref{eq:unbalanced:w-to-weakstar-e:1}, it now follows that $\dualprod{(\pi_\#^1-\pi_\#^0)\this\gamma}{\phi} \to 0$.
    We, thus,  obtain the weak-$*$ convergence of $(\pi_\#^1-\pi_\#^0)\this\gamma$ to zero.
    In particular, $\{(\pi_\#^1-\pi_\#^0)\this\gamma\}_{k \in \N}$ must be bounded.
    Our assumptions and the first part of \eqref{eq:unbalanced:w-to-weakstar-e:1} then imply
    $
        (\this\mu-\this\nu)-(\pi_\#^1-\pi_\#^0)\this\gamma \weaktostar 0.
    $
    Since $(\pi_\#^1-\pi_\#^0)\this\gamma \weaktostar 0$, necessarily $\this\mu-\this\nu \weaktostar 0$.
\end{proof}

We now concentrate on the special case $c=c_2$ for our marginal energies of primary interest.

\begin{corollary}
    $\UMK_{c_2,E_\Meas,\TwoPlans}(\this\nu, \this\mu) \to 0$ with $\{(\this\nu, \this\mu)\}_{k \in \N}$ bounded implies $\this\mu - \this\nu \weaktostar 0$.
\end{corollary}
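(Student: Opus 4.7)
The plan is to apply \cref{lemma:unbalanced:w-to-weak} directly with $c = c_2$ and $E = E_\Meas$, reducing the statement to the verification of the three hypotheses of that lemma. I expect no serious obstacle: this corollary is essentially a specialisation where each hypothesis is either built into the definition or follows from a one-line observation.

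First I would verify the lower bound on the transport cost. By definition, $c_2(x,y) = \frac{1}{2}\norm{x-y}^2$, so the hypothesis $\epsilon \norm{x-y}^p \le c(x,y)$ of \cref{lemma:unbalanced:w-to-weak} holds with $p = 2$ and $\epsilon = 1/2$. Second, the non-negativity $0 \le E_\Meas$ is immediate from $E_\Meas(\nu_0,\nu_1) = \frac{1}{2}\norm{\nu_1 - \nu_0}_\Meas^2$.

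The only remaining hypothesis is the implication that $E_\Meas(\this\nu,\this\mu) \to 0$ with $\{(\this\nu,\this\mu)\}_{k\in\N}$ bounded implies $\this\mu - \this\nu \weaktostar 0$. But $E_\Meas(\this\nu,\this\mu) \to 0$ means exactly $\norm{\this\mu - \this\nu}_\Meas \to 0$, i.e., convergence to zero in the Radon norm, which in turn implies weak-$*$ convergence to zero; boundedness is not even needed here. With this, \cref{lemma:unbalanced:w-to-weak} applies and yields the claim.

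Since the proof is a three-line verification, I would write it compactly as a single short paragraph rather than a structured multi-step argument, perhaps noting that the boundedness hypothesis of the corollary is inherited from the lemma but is not actually used in the $E_\Meas$ case.
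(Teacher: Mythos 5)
Your proposal is correct and takes exactly the same route as the paper, which simply states that the hypotheses of \cref{lemma:unbalanced:w-to-weak} hold with $p=2$ and $\epsilon=1/2$; you merely spell out the verification (including the observation that $E_\Meas(\this\nu,\this\mu)\to 0$ gives Radon-norm, hence weak-$*$, convergence) that the paper leaves implicit.
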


\begin{proof}The conditions of \cref{lemma:unbalanced:w-to-weak} hold with $p=2$ and $\epsilon=1/2$.
\end{proof}

\begin{corollary}
    \label{cor:unbalanced:weak-wave}
    Suppose $0 \in \TwoPlans(\nu,\mu)$ for all $\nu,\mu \in \Masses$.
    Let $0 \not\equiv \rho \in C_0(\R^n) \isect L^2(\R^n)$ be symmetric and positive semi-definite and presentable as the autoconvolution $\rho=\rho^{1/2} * \rho^{1/2}$ for some $\rho^{1/2} \in L^2(\R^n) \isect C_0(\R^n)$.
    On a bounded domain $\Omega \subset \R^n$, let $\Wave \in \linear(\Masses; \Predual)$ be defined by $\Wave \mu = \rho * \mu$ for $\mu \in \Meas(\Omega)$.
    Then
    \begin{enumerate}[label=(\roman*)]
        \item\label{item:wave:constr:weak-to-wave-new}
        If  $\rho^{1/2} \in C_c(\R^n)$, then $\this\mu \weaktostar \mu$ weakly-$*$ in $\Meas(\Omega)$ implies $\UMK_{c_2, E_\Wave,\TwoPlans}(\this\mu, \mu) \to 0$.
        \item\label{item:wave:constr:wave-to-weak-new}
        If $\rho^{1/2}$ is locally Lipschitz, and
        $\Span \{ x \mapsto \rho^{1/2}(x - y) \mid y \in \Omega \}$ is dense in $\Predual$, then $\UMK_{c_2, E_\Wave,\TwoPlans}(\this\mu, \mu) \to 0$ with $\{\this\mu\}_{k \in \N} \subset \Meas(\Omega)$ bounded implies $\this\mu \weaktostar \mu$ weakly-$*$ in $\Meas(\Omega)$.
    \end{enumerate}
\end{corollary}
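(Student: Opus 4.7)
My plan is to reduce both parts of the corollary to the preceding \cref{lemma:unbalanced:weak-to-w,lemma:unbalanced:w-to-weak} by exploiting the identity
\begin{equation*}
    E_\Wave(\nu_0, \nu_1) = \tfrac{1}{2}\bignorm{\rho^{1/2} * (\nu_1 - \nu_0)}_{L^2(\R^n)}^2,
\end{equation*}
which I would derive from the autoconvolution factorisation $\rho = \rho^{1/2} * \rho^{1/2}$ and Fubini, using that $\rho^{1/2}$ may be taken symmetric: positive semi-definiteness of $\rho$ forces $(\widehat{\rho^{1/2}})^2 = \hat\rho \ge 0$, so $\widehat{\rho^{1/2}}$ is real and hence $\rho^{1/2}$ is even. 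Both parts thereby reduce to controlling the convolution operator $\eta \mapsto \rho^{1/2} * \eta$ on bounded subsets of $\Meas(\Omega)$.

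For part \ref{item:wave:constr:weak-to-wave-new} I would verify the hypothesis of \cref{lemma:unbalanced:weak-to-w}: any sequence $\eta_k := \this\mu - \this\nu \weaktostar 0$ must satisfy $\norm{\rho^{1/2}*\eta_k}_{L^2} \to 0$. The Banach–Steinhaus theorem gives $\sup_k \norm{\eta_k}_\Meas < \infty$, and since for each $x \in \R^n$ the function $y \mapsto \rho^{1/2}(x-y)$ lies in $C_c(\R^n) \subset \Predual$, we have pointwise convergence $(\rho^{1/2}*\eta_k)(x) \to 0$. Because $\rho^{1/2}$ has compact support and $\Omega$ is bounded, all convolutions $\rho^{1/2}*\eta_k$ are supported in a fixed bounded set $K = \Omega - \supp \rho^{1/2}$ and are uniformly bounded by $\norm{\rho^{1/2}}_\infty \sup_k \norm{\eta_k}_\Meas$. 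Dominated convergence then yields $\rho^{1/2}*\eta_k \to 0$ in $L^2$.

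For part \ref{item:wave:constr:wave-to-weak-new} I would invoke \cref{lemma:unbalanced:w-to-weak} with $c = c_2$, so that $\epsilon \norm{x-y}^p \le c(x,y)$ holds for $p=2$ and $\epsilon = 1/2$. It then suffices to show that a bounded sequence $\{\eta_k\} \subset \Meas(\Omega)$ with $\rho^{1/2} * \eta_k \to 0$ in $L^2$ already satisfies $\eta_k \weaktostar 0$. Local Lipschitz continuity of $\rho^{1/2}$ combined with $\Meas$-boundedness of $\{\eta_k\}$ renders $\{\rho^{1/2}*\eta_k\}$ equicontinuous and uniformly bounded on every bounded subset of $\R^n$. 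An equicontinuous, $L^2$-null family on a bounded set must then converge uniformly on compact subsets: were $|(\rho^{1/2}*\eta_k)(x_k)| \ge \delta$ for some $x_k$ in a compact set, equicontinuity would force $|\rho^{1/2}*\eta_k| \ge \delta/2$ on a ball of fixed radius around $x_k$, contributing a fixed positive amount to the $L^2$-norm—a contradiction. Hence $\rho^{1/2}*\eta_k \to 0$ uniformly on $\Omega$, and by symmetry of $\rho^{1/2}$,
\begin{equation*}
    \dualprod{\eta_k}{\rho^{1/2}(\freevar - y)} = (\rho^{1/2} * \eta_k)(y) \to 0 \quad\text{uniformly for } y \in \Omega.
\end{equation*}
A standard three-$\epsilon$ argument based on the assumed density of $\Span\{x \mapsto \rho^{1/2}(x - y) \mid y \in \Omega\}$ in $\Predual$ together with $\Meas$-boundedness of $\{\eta_k\}$ then extends this to $\dualprod{\eta_k}{\phi} \to 0$ for every $\phi \in \Predual$, i.e., $\eta_k \weaktostar 0$.

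The main technical step is the upgrade from $L^2$-convergence to uniform-on-compacts convergence via equicontinuity in part \ref{item:wave:constr:wave-to-weak-new}; this is precisely where the local Lipschitz assumption on $\rho^{1/2}$ enters essentially, as pointwise convergence of the convolutions is what is needed to test against the spanning family $\{\rho^{1/2}(\freevar - y)\}$. Part \ref{item:wave:constr:weak-to-wave-new} is relatively routine once the $L^2$ identity for $E_\Wave$ is in hand.
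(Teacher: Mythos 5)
Your proposal is correct and follows essentially the same route as the paper: both reduce the corollary to \cref{lemma:unbalanced:weak-to-w,lemma:unbalanced:w-to-weak} and only need to verify their hypotheses for $E_\Wave$. The sole difference is that the paper outsources that verification to \cite[Theorem 2.4]{tuomov-pointsource}, whereas you prove it inline via the identity $\norm{\eta}_\Wave^2 = \bignorm{\rho^{1/2}*\eta}_{L^2}^2$ together with dominated convergence (for \ref{item:wave:constr:weak-to-wave-new}) and the equicontinuity upgrade from $L^2$ to locally uniform convergence (for \ref{item:wave:constr:wave-to-weak-new}) --- a self-contained substitute for the cited result.
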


\begin{proof}Let $E(\mu, \nu) \defeq \frac{1}{2}\norm{\nu-\mu}_\Wave$.
    Then \cite[Theorem 2.4]{tuomov-pointsource} verifies the conditions of \cref{lemma:unbalanced:weak-to-w,lemma:unbalanced:w-to-weak} regarding $E$.
    The rest follow from those lemmas.
\end{proof}

\section{Estimates and expansions}
\label{sec:subdiff}

We now study subdifferential-type lower estimates that incorporate unbalanced transport, as well as transport-relative smoothness of functions.
We first review existing “transport subdifferentials” in \cref{sec:subdiff:balanced}, then develop relevant estimates for the unbalanced transport costs $V_{c,E}$ in \cref{sec:subdiff:cost}.
Based on this, in \cref{sec:subdiff:subdiff} we suggest a definition of an unbalanced transport subdifferential.
In the final \cref{sec:subdiff:smoothness}, we discuss relevant concepts of smoothness, that we will be using.

\subsection{Balanced transport subdifferentials}
\label{sec:subdiff:balanced}

In \cite{ambrosio2008gradientflows}, a theory is presented for (extended) Fréchet subdifferentials of functions of probability measures with respect to three-transport plans.
Specifically, with $\mu_1, \mu_2 \in \ProbMeas(\Omega)$, denote the set of optimal transports from $\mu_1$ to $\mu_2$ by
\[
    \Gamma_o(\mu_1, \mu_2) \defeq \left\{
    \gamma \in \Gamma(\mu_1, \mu_2)
        \,\middle|\,
        \MK_{c_2}(\mu_1, \mu_2) = \int c_2(x, y) \d\gamma(x,y)
    \right\}.
\]
Then \cite[Definition 10.3.1]{ambrosio2008gradientflows} defines the (extended) Fréchet subdifferential of $F: \ProbMeas(\Omega) \to (-\infty, \infty]$ at $\mu_1$ to be the set $\subdiff F(\mu_1)$ of transport plans $\gamma \in \TwoMassesNonNeg$ satisfying $\pi_\#^0\gamma=\mu_1$ and\footnote{%
    For simplicity we consider here only the exponent $p=2$, and a bounded set $\Omega$.
    The definition of $\Gamma_o$ on \cite[page 14]{ambrosio2008gradientflows} has a typing mistake.
    The element of $\ProbMeas(\Omega^3)$ should be $\mathbf{\mu}$ instead of $\gamma$.
    For consistency, our indexing also differs from \cite{ambrosio2008gradientflows}.
}
\begin{equation}
    \label{eq:subdiff:frechet-transport}
    F(\mu_2)-F(\mu_1) \ge \inf \left\{
        \int_{\Omega^3} \iprod{x}{z-y} d\tritrans(x,y,z)
        + o(W_2(\mu_1, \mu_2))
        \,\middle|\,
        \begin{array}{l}
            \tritrans \in \ProbMeas(\Omega^3),\,
            \pi_\#^{1,0}\tritrans=\gamma,\\
            \pi_\#^{1,2} \tritrans \in \Gamma_o(\mu_1, \mu_2)
        \end{array}
    \right\}.
\end{equation}
According to  \cite[Theorem 10.3.6]{ambrosio2008gradientflows}, for geodesically convex $F$ the $o$-term can be omitted.

On the other hand, with $\mu_0,\mu_1,\mu_2 \in \ProbMeas(\Omega)$, let $\tritrans \in \ProbMeas(\Omega^3)$ be such that $\pi_\#^{0,2}\tritrans \in \Gamma_0(\mu_0, \mu_2)$ and  $\pi_\#^{0,1}\tritrans \in \Gamma_0(\mu_0, \mu_1)$.
Then, using Pythagoras' identity, we expand
\[
	\begin{split}
		W_2^2(\mu_0, \mu_2) - W_2^2(\mu_0, \mu_1)
		&
        =
        \frac{1}{2}\int_{\Omega^2} \abs{z-x}^2 d\pi_\#^{0,2}\tritrans(x, z)
        - \frac{1}{2}\int_{\Omega^2} \abs{y-x}^2 d\pi_\#^{0,1}\tritrans(x, y)
        \\
        &
		=
		\frac{1}{2}\int_{\Omega^3} \abs{z-x}^2 - \abs{y-x}^2 d\tritrans(x, y, z)
        \\
		&
		=
		\int_{\Omega^3} \iprod{y-x}{z-y} + \frac{1}{2}\abs{z-y}^2 d\tritrans(x, y, z).
    \end{split}
\]
Hence,
\[
    W_2^2(\mu_0, \mu_2)
    -
    W_2^2(\mu_0, \mu_1)
    \ge
    \int_{\Omega^3} \iprod{y-x}{z-y} d\tritrans(x, y, z)
    +W_2^2(\mu_1, \mu_2).
\]
In analogy with standard convex subdifferentials of $c_2$, this suggests a different definition of a “transport subdifferential”: $g(y, x)=y-x$ should be a transport subdifferential of $W_2^2(\nu, \freevar)$.
We will base our approach on this latter idea, extending it to the setting of unbalanced transport.
However, to avoid introducing high computational costs, we will not work with the optimal squared distances $W_2^2$, and more generally, $\UMK_{c,E,\TwoPlans}$, but with $V_{c,E}$.

\subsection{Transport costs}
\label{sec:subdiff:cost}

With an eye on replacing the Pythagoras' or three-point identity satisfied by Hilbert space norms, we now derive three-point identities and inequalities for unbalanced optimal transport distances.
For our first result, we assume to be given an energy $E: \Masses^2 \to \R$ that satisfies the bound
\begin{equation}
    \label{eq:unbalanced:energy}
    E(\mu, \opt\mu) \ge \dualprod{\omega}{\opt\mu-\nu} + E(\mu, \nu) + E(\nu, \opt\mu)
    \quad (\omega \in \subdiff E(\mu, \freevar)(\nu))
\end{equation}
and is convex in the second parameter.
This is the case for $E=B_J$ a Bregman divergence with a (for simplicity) smooth generator $J$; see \cref{lemma:bregman-three-point}.
In particular, $E_\Wave$ satisfies \eqref{eq:unbalanced:energy}, while $E_\Meas$ does not.

We wish to apply $V_{c_2,E}$ of \cref{sec:transport-new:definitions} to the marginal projections $\pi_\#^{i,j}\tritrans$ of the three-transport $\tritrans$.
However, for the argument of the next theorem to work, we need to swap the total variation measure $\abs{\pi_\#^{i,j}\tritrans}$ of the projection for the projection $\pi_\#^{i,j}\abs{\tritrans}$ of the total variation measure.
Towards this end, we introduce for $(i, j) \in \{(0,1),(0,2),(1,2)\}$ and all $\mu, \nu \in \Masses$ and $\tritrans \in \ThreePlansSpace$, the distances
\[
    \begin{split}
    \bar V_{c_2, E}^{i,j}(\mu, \nu; \tritrans)
    &
    \defeq
    V_{c_2, E}(\mu, \nu; \pi_\#^{i,j}\tritrans)
    + \int_{\Omega^2} c_2 d(\pi_\#^{i,j}\abs{\tritrans} - \abs{\pi_\#^{i,j}\tritrans}).
    \\
    &
    =
    \int_{\Omega^2} c_2(x, y) \d \pi_\#^{i,j}\abs{\tritrans}(x, y)
    +
    E(\mu - \pi_\#^i \tritrans, \nu - \pi_\#^j \tritrans).
    \end{split}
\]
If $\tritrans$ is a positive measure, $\bar V_{c_2, E}^{i,j}(\mu, \nu; \tritrans)=V_{c_2, E}(\mu, \nu; \pi_\#^{i,j}\tritrans)$.
For $\star=\Meas,\Wave$ (see \eqref{eq:unbalanced:v-star}), we can expand
\[
    \bar V_{c_2, E_\star}^{i,j}(\mu, \nu; \tritrans)
    =
    \int_{\Omega^3} \frac{1}{2}\abs{x-y}^2 \d \abs{\tritrans}(x, y, z)
    +
    \frac{1}{2}\norm{\nu-\mu - (\pi_\#^j-\pi_\#^i) \tritrans}_\star^2.
\]
In \cref{sec:sub}, we will apply the next theorem with $\mu_0=\this\mu$ and $\mu_1=\nexxt\mu$ two consecutive iterates of our proposed algorithm, and $\mu_2$ a comparison measure, e.g., an optimal solution.

\begin{theorem}\label{thm:unbalanced:twocost-three-point}
    Let $E$ be as above and $J: \Meas(\Omega) \to \extR$ be convex, proper, weak-$*$ lower semicontinuous.
    Pick $\mu_0,\mu_1, \mu_2 \in \Masses$ as well as $\tritrans \in \ThreePlansSpace$.
    Then for any $\omega \in \subdiff E(\mu_0 - \pi_\#^0\tritrans_{}, \freevar)(\mu_1 - \pi_\#^1\tritrans_{})$, we have
    \begin{equation}
        \label{eq:unbalanced:twocost-three-point:0}
        \begin{split}
        \bar V_{c_2, E}^{0,2}(\mu_0, \mu_2; \tritrans) - \bar V_{c_2, E}^{0,1}(\mu_0, \mu_1; \tritrans)
        &
        \ge
        \int_{\Omega^3} \iprod{y-x}{z-y} \d\abs{\tritrans}(x, y, z)
        \\
        \MoveEqLeft[-1]
        +\dualprod{\omega}{\mu_2- \mu_1-(\pi_\#^2-\pi_\#^1)\tritrans}
        + \bar V_{c_2, E}^{1,2}(\mu_1, \mu_2; \tritrans).
        \end{split}
    \end{equation}
\end{theorem}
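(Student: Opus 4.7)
The plan is to split the difference $\bar V^{0,2}-\bar V^{0,1}$ into its transport-cost part (involving integrals of $c_2$) and its marginal-energy part (involving $E$), handle each with an elementary three-point identity, and then add the resulting inequalities.

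For the transport-cost part, I will use the pointwise Pythagoras-type expansion
\[
    c_2(x,z) - c_2(x,y)
    = \tfrac12\bigl(|z-y|^2 + 2\iprod{y-x}{z-y}\bigr)
    = c_2(y,z) + \iprod{y-x}{z-y},
\]
valid for all $x,y,z\in\R^n$. Integrating this identity against the non-negative measure $|\tritrans|\in\ThreePlansSpaceNonNeg$ on $\Omega^3$, and observing that for any $\phi\in C(\Omega^2)$ one has $\int_{\Omega^3}\phi(x_i,x_j)\d|\tritrans| = \int_{\Omega^2}\phi\d\pi_\#^{i,j}|\tritrans|$, I obtain
\[
    \int_{\Omega^2} c_2\d\pi_\#^{0,2}|\tritrans|
    - \int_{\Omega^2} c_2\d\pi_\#^{0,1}|\tritrans|
    = \int_{\Omega^3}\iprod{y-x}{z-y}\d|\tritrans|(x,y,z)
    + \int_{\Omega^2} c_2\d\pi_\#^{1,2}|\tritrans|.
\]
This already produces the inner-product term and the cost part of $\bar V^{1,2}$ that appear on the right-hand side of \eqref{eq:unbalanced:twocost-three-point:0}.

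For the marginal-energy part, I set $\nu_i \defeq \mu_i - \pi_\#^i\tritrans$ for $i=0,1,2$, so that $\bar V^{i,j}$ contains the summand $E(\nu_i,\nu_j)$. Applying the assumed three-point inequality \eqref{eq:unbalanced:energy} with $\mu=\nu_0$, $\nu=\nu_1$, $\opt\mu=\nu_2$, and noting that $\nu_2-\nu_1=\mu_2-\mu_1-(\pi_\#^2-\pi_\#^1)\tritrans$, I get, for every $\omega\in\subdiff E(\nu_0,\freevar)(\nu_1)$,
\[
    E(\nu_0,\nu_2) - E(\nu_0,\nu_1)
    \ge \dualprod{\omega}{\mu_2-\mu_1-(\pi_\#^2-\pi_\#^1)\tritrans}
    + E(\nu_1,\nu_2).
\]

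Adding this inequality to the equality derived for the $c_2$-integrals, and regrouping the terms on the right so that the cost integral $\int c_2\d\pi_\#^{1,2}|\tritrans|$ combines with $E(\nu_1,\nu_2)$ into precisely $\bar V^{1,2}(\mu_1,\mu_2;\tritrans)$, yields exactly \eqref{eq:unbalanced:twocost-three-point:0}. The argument is essentially bookkeeping; the only mild subtlety is to integrate the Pythagoras identity against the \emph{total variation} measure $|\tritrans|$ (so that $|\tritrans|\ge 0$ and pushforwards through $\pi^{i,j}$ behave as expected), consistent with the definition of $\bar V^{i,j}$ in the statement. No convexity of $E$ in the second argument beyond what is built into \eqref{eq:unbalanced:energy} is needed for this step.
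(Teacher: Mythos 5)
Your proof is correct and follows essentially the same route as the paper's: the paper likewise splits the difference into a cost part $I$ (handled by the pointwise expansion $\tfrac12|z-x|^2-\tfrac12|y-x|^2=\iprod{y-x}{z-y}+\tfrac12|z-y|^2$ integrated against $\abs{\tritrans}$ and pushed forward) and a marginal part $D$ (handled by the assumed bound \eqref{eq:unbalanced:energy} applied to $\nu_i=\mu_i-\pi_\#^i\tritrans$), then sums. The bookkeeping and the regrouping into $\bar V^{1,2}_{c_2,E}$ match the paper exactly.
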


\begin{proof}We expand
    \begin{gather}
        \label{eq:unbalanced:twocost-three-point:split}
        \bar V_{c_2, E}^{0,2}(\mu_0, \mu_2; \tritrans)
        - \bar V_{c_2, E}^{0,1}(\mu_0, \mu_1; \tritrans)
        =
        D + I
    \shortintertext{for}
        \label{eq:unbalanced:twocost-three-point:I}
        \begin{split}
        I
        &
        \defeq
        \frac{1}{2}\int_{\Omega^2} \abs{z-x}^2 \d\pi_\#^{0,2}\abs{\tritrans}(x, z)
        - \frac{1}{2}\int_{\Omega^2} \abs{y-x}^2 \d\pi_\#^{0,1}\abs{\tritrans}(x, y)
        \\
        &
        =
        \frac{1}{2}\int_{\Omega^3} [\abs{z-x}^2 - \abs{y-x}^2] \d\abs{\tritrans}(x, y, z)
        \\
        &
        =
        \int_{\Omega^3} \iprod{y-x}{z-y} + \frac{1}{2}\abs{z-y}^2 \d\abs{\tritrans}(x, y, z)
        \\
        &
        =
        \int_{\Omega^3} \iprod{y-x}{z-y} \d\abs{\tritrans}(x, y, z)
        +  \frac{1}{2} \int_{\Omega^2} \abs{z-y}^2 \d\pi_\#^{1,2}\abs{\tritrans}(y,z)
        \end{split}
    \shortintertext{and}
        \label{eq:unbalanced:twocost-three-point:D}
        \begin{split}
            D
            &
            \defeq
            E(\mu_0 - \pi_\#^0\tritrans, \mu_2 - \pi_\#^2\tritrans)
            - E(\mu_0 - \pi_\#^0\tritrans, \mu_1 - \pi_\#^1\tritrans)
            \\
            &
            \ge
            E(\mu_1 - \pi_\#^1\tritrans, \mu_2 - \pi_\#^2\tritrans)
            + \dualprod{\omega}{(\mu_2 - \pi_\#^2\tritrans)-(\mu_1 - \pi_\#^1\tritrans)}.
        \end{split}
    \end{gather}
    Combined, \cref{eq:unbalanced:twocost-three-point:split,eq:unbalanced:twocost-three-point:I,eq:unbalanced:twocost-three-point:D} yield the claim.
\end{proof}

\begin{remark}
    In the previous theorem, we can replace $c_2$ by any Bregman divergence $B_j$ for a convex and smooth $j: \Omega \to \R$, and $\iprod{y-x}{z-y}$ by $\iprod{D_2 B_j(x, y)}{z-y}$.
\end{remark}

The following variant only considers two-transport, and omits the comparison measure $\mu_2$.
We then do not require \eqref{eq:unbalanced:energy}, so the result applies to $E_\Meas$ of \eqref{eq:unbalanced:emeas}. We will use the result in \cref{sec:fb} to obtain a weaker convergence result than the one that we obtain in \cref{sec:sub} by assuming \cref{eq:unbalanced:energy}.

\begin{theorem}\label{thm:unbalanced:twocost-three-point:reverse}
    Let $E: \Masses^2 \to \R$ be convex in the second parameter and satisfy $E(\nu,\nu)=0$ for all $\nu$.
    Then for any $\mu_0,\mu_1 \in \Masses$; $\gamma \in \TwoPlansSpace$; and $\omega \in \subdiff E(\pi_\#^0\gamma - \mu_0, \freevar)(\pi_\#^1\gamma - \mu_1)$, we have
    \[
        \begin{split}
        0
        &
        \ge
        \int_{\Omega^{2}} -\abs{y-x}^2\d\abs{\gamma}(x,y)
        -\dualprod{\omega}{\mu_{1}- \mu_{0}-(\pi_\#^1-\pi_\#^0)\gamma}
        + V_{2c_2, E}(\mu_0, \mu_1; \gamma)
        \\
        \MoveEqLeft[-1]
        + E(\mu_0 - \pi_\#^0\gamma, \freevar + \mu_0 - \pi_\#^0\gamma)^*(\omega).
        \end{split}
    \]
\end{theorem}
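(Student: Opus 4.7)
The approach is Fenchel--Young duality, built on the key observation that $2c_2(x,y) = \abs{x-y}^2$. This forces the explicit $-\int \abs{y-x}^2\d\abs{\gamma}$ term to cancel against the cost integral hidden inside $V_{2c_2,E}(\mu_0,\mu_1;\gamma)$. (The domain $\Omega^3$ in the statement is $\Omega^2$, as $\gamma \in \TwoPlans$ is a two-plan.) Introducing the shorthand $a \defeq \mu_0 - \pi_\#^0\gamma$ and $b \defeq \mu_1 - \pi_\#^1\gamma$, the right-hand side of the claim collapses to
\[
    E(a,b) + \dualprod{\omega}{(\mu_0 - \mu_1) - (\pi_\#^1 - \pi_\#^0)\gamma} + E(a, \freevar + a)^*(\omega),
\]
and the remaining task is to prove this quantity is nonpositive.

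The next step is to handle the shifted Fenchel conjugate via the change of variables $\nu \defeq \eta + a$:
\[
    E(a,\freevar + a)^*(\omega)
    = \sup_\nu\bigl\{\dualprod{\omega}{\nu - a} - E(a,\nu)\bigr\}
    = E(a,\freevar)^*(\omega) - \dualprod{\omega}{a}.
\]
This re-expresses the unusual object as an ordinary Fenchel conjugate of the convex function $E(a,\freevar)$ plus a linear term in $\omega$.

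Finally, I would invoke Fenchel--Young. The subdifferential hypothesis identifies $\omega$ as a (signed) subgradient of the second argument of $E$ at the corresponding point, so Young's inequality is an equality, which may be written as $E(a,b) + E(a,\freevar)^*(\omega) = \dualprod{\omega}{b}$. Substituting this into the reduced expression and recognising $b - a = (\mu_1 - \mu_0) - (\pi_\#^1 - \pi_\#^0)\gamma$ then causes every term to cancel, yielding $0$, which is certainly $\le 0$. The main obstacle is bookkeeping: the subdifferential hypothesis is stated with the sign-flipped arguments $\pi_\#^i\gamma - \mu_i$, whereas $V_{c,E}$ and the shifted conjugate use the opposite signs $\mu_i - \pi_\#^i\gamma$. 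One therefore has to leverage a parity property of $E$, which is automatic for the model cases $E_\Meas$ and $E_\Wave$ since both depend only on $\nu_1-\nu_0$, in order to turn the subdifferential hypothesis into Young's equality in the form needed above and to verify that the remaining dual product equals the claimed transport discrepancy.
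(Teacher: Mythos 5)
Your proposal is correct and follows essentially the same route as the paper's proof: the Fenchel--Young equality $E(a,b)+E(a,\freevar)^*(\omega)=\dualprod{\omega}{b}$ for $a=\mu_0-\pi_\#^0\gamma$, $b=\mu_1-\pi_\#^1\gamma$, combined with the shift identity $E(a,\freevar+a)^*(\omega)=E(a,\freevar)^*(\omega)-\dualprod{\omega}{a}$ and the cancellation of the cost integral against the one inside $V_{2c_2,E}$. Your bookkeeping remarks (the sign-flipped arguments in the subdifferential hypothesis, $\Omega^3$ versus $\Omega^2$, and the orientation of $(\pi_\#^1-\pi_\#^0)\gamma$ in the dual pairing, whose exact cancellation requires the $(\pi_\#^0-\pi_\#^1)$ convention) correctly flag typographical inconsistencies that the paper's own two-line proof silently glosses over.
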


\begin{proof}By the Fenchel–Young equality,
    \[
        E(\mu_0 - \pi_\#^0\gamma, \mu_1 - \pi_\#^1\gamma)
        +
        E(\mu_0 - \pi_\#^0\gamma, \freevar)^*(\omega)
        =
        \dualprod{\omega}{\mu_1 - \pi_\#^1\gamma}.
    \]
    By the properties of Fenchel conjugates (e.g., \cite[Lemma 5.7\,(ii)]{clasonvalkonen2020nonsmooth}), this rearranges as
    \[
        0
        =
        E(\mu_0 - \pi_\#^0\gamma, \mu_1 - \pi_\#^1\gamma)
        +
        E(\mu_0 - \pi_\#^0\gamma, \freevar + \mu_0 - \pi_\#^0\gamma)^*(\omega)
        +\dualprod{\omega}{\mu_0-\mu_1-(\pi_\#^0-\pi_\#^1)\gamma}.
    \]
    This and the construction of $V_{2c_2, E}$ give the claim.
\end{proof}

\begin{remark}
    If $E(\mu,\nu)=J(\mu-\nu)$, we have $E(\mu_0 - \pi_\#^0\gamma, \freevar + \mu_0 - \pi_\#^0\gamma)^*(\omega)=J^*(-\omega)$.
\end{remark}

\subsection{Transport subdifferentials}
\label{sec:subdiff:subdiff}

\begin{theorem}\label{thm:unbalanced:subdiff:general}
    Let $G: \Masses \to \extR$ be convex, proper, and lower semicontinuous.
    Then, for any $\mu_2,\mu_1 \in \Masses$ as well as $\tritrans \in \ThreePlansSpace$, for all differentiable $w \in \subdiff G(\mu_1)$, we have
    \begin{equation}
        \label{eq:unbalanced:subdiff:general}
        G(\mu_2) - G(\mu_1)
        \ge
        \int_{\Omega^3} \iprod{\grad w(y)}{z-y} \d\tritrans(x, y, z) + \dualprod{w}{\mu_2-\mu_1 - (\pi_\#^2-\pi_\#^1)\tritrans}
        + r(w,\tritrans),
    \end{equation}
    where
    $
        r(w,\tritrans) \defeq  \int_{\Omega^3} B_w(y, z) \d\tritrans(x, y, z).
    $
\end{theorem}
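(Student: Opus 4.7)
The plan is to bootstrap from the ordinary subdifferential inequality for the convex function $G$ and rewrite the one resulting dual product as a sum of three-plan integrals using a pointwise Bregman expansion of the differentiable function $w$.

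First, since $w \in \subdiff G(\mu_1)$ and $G$ is convex, proper, and lower semicontinuous, the definition of the pre-subdifferential gives
\[
    G(\mu_2) - G(\mu_1) \ge \dualprod{w}{\mu_2 - \mu_1}.
\]
This is the only place where convexity and lower semicontinuity of $G$ enter; everything afterwards is an identity manipulating the right-hand side.

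Next I would split the dual product using the three-plan $\tritrans$ by adding and subtracting:
\[
    \dualprod{w}{\mu_2 - \mu_1}
    =
    \dualprod{w}{\mu_2 - \mu_1 - (\pi_\#^2 - \pi_\#^1)\tritrans}
    +
    \dualprod{w}{(\pi_\#^2 - \pi_\#^1)\tritrans}.
\]
The first summand already matches a term in \eqref{eq:unbalanced:subdiff:general}, so the task reduces to rewriting the second summand. Using the definition of the push-forward applied to the projections $\pi^1, \pi^2$, one has
\[
    \dualprod{w}{(\pi_\#^2 - \pi_\#^1)\tritrans}
    =
    \int_{\Omega^3} \bigl( w(z) - w(y) \bigr) \d\tritrans(x, y, z).
\]

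Then I would invoke the Bregman identity for the differentiable function $w$, namely the pointwise exact Taylor-type expansion
\[
    w(z) - w(y) = \iprod{\grad w(y)}{z - y} + B_w(y, z),
\]
which is simply the definition of $B_w$ for a Fréchet differentiable function. Integrating against $\tritrans$ and using the definition of $r(w, \tritrans)$ yields
\[
    \dualprod{w}{(\pi_\#^2 - \pi_\#^1)\tritrans}
    =
    \int_{\Omega^3} \iprod{\grad w(y)}{z - y} \d\tritrans(x, y, z)
    + r(w, \tritrans).
\]
Assembling the two splittings gives precisely the claimed inequality.

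There is no genuine obstacle: the argument is an identity-plus-subgradient-inequality. The only subtlety worth being careful about is that $w$ is asked to be differentiable (so $\grad w(y)$ is well-defined pointwise and $B_w(y,z)$ makes sense), and that the dual pairing $\dualprod{w}{\freevar}$ between $w \in \Predual$ (or a suitable predual space) and Radon measures is exactly the integral against $w$, which is what lets us move from $\dualprod{w}{(\pi_\#^2 - \pi_\#^1)\tritrans}$ into an integral on $\Omega^3$ via the push-forward identity $\int w \d \pi_\#^i \tritrans = \int w(x_i) \d\tritrans(x_0, x_1, x_2)$.
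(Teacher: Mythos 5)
Your proposal is correct and follows exactly the paper's argument: the convex subdifferential inequality, the splitting of $\dualprod{w}{\mu_2-\mu_1}$ via the marginals of $\tritrans$, and the pointwise Bregman expansion $w(z)-w(y)=\iprod{\grad w(y)}{z-y}+B_w(y,z)$ integrated against $\tritrans$. The paper compresses these steps into a single displayed chain, but the content is identical.
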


\begin{proof}By the definition of the convex subdifferential, we have
    \[
        \begin{split}
        G(\mu_2) - G(\mu_1)
        &
        \ge
        \dualprod{w}{\mu_2-\mu_1}
        =
        \int_{\Omega^3} w(z)-w(y) \d\tritrans(x, y, z) + \dualprod{w}{\mu_2-\mu_1 - (\pi_\#^2-\pi_\#^1)\tritrans}
        \\
        &
        =
        \int_{\Omega^3} \iprod{\grad w(y)}{z-y} \d\tritrans(x, y, z) + \dualprod{w}{\mu_2-\mu_1 - (\pi_\#^2-\pi_\#^1)\tritrans}
        + r(w,\tritrans).
        \qedhere
        \end{split}
    \]
\end{proof}

\begin{remark}[Intepretation: transport subdifferentials]
    Taking for simplicity $\tritrans \ge 0$, we can interpret \eqref{eq:unbalanced:twocost-three-point:0} as $(g, \omega)$ for $g(y, x) \defeq y-x$ being a ($V_{c_2,E}$-strong) \term{unbalanced transport subdifferential} of $V_{c_2,E}(\mu_0, \freevar; \freevar)$.
    Likewise, if $w$ were convex and $\tritrans \ge 0$, so that $r(w,\tritrans) \ge 0$, we could interpret $(g, w)$ for $g(x, y) \defeq \grad w(x)$ as an unbalanced transport subdifferential of $G$.
    In both cases, $g$ is the transport component of the subdifferential, and $w$ or $\omega$ is the marginal component, only evaluated against $\mu_2- \mu_1-(\pi_\#^2-\pi_\#^1)\tritrans$.

    In general, we cannot expect $w$ to be convex.
    Then, to define a weaker form of an unbalanced transport subdifferential, similarly to \eqref{eq:subdiff:frechet-transport}, it would be possible to assume $r(w,\tritrans)$ to be small by considering proximal subdifferentials or restricting the set of three-plans $\tritrans$ such that $(\pi_\#^2-\pi_\#^1)\tritrans \approx 0$.
    The algorithm that we present in \cref{sec:fb} will require such restrictions, however, we refrain at this stage from proposing an explicit definition of a transport subdifferential.
\end{remark}

\subsection{Concepts of smoothness}
\label{sec:subdiff:smoothness}

As we intend to derive forward-backward type methods for \eqref{eq:intro:problem}, we will need to make more precise the various flavours of smoothness required from $F$.
For our main concept smoothness, we say that a convex and pre-differentiable $F: \Masses \to \R$ is $(L,\ell)$-\term{smooth} with respect to $E$ and $c$, if for all $\mu,\nu \in \Masses$ and $\gamma \in \TwoPlansSpace$, we have
\[
    B_F(\mu + (\pi_\#^1-\pi_\#^0)\gamma, \nu)
    \le
    V_{\ell c, L E}(\mu, \nu; \gamma).
\]

\begin{example}\label{ex:estimates:smoothness}
    Let $F(\mu)=\frac{1}{2}\norm{A\mu-b}^2$ for some $A \in \linear(\Masses; Y)$ and a Hilbert space $Y$. Then
    \[
        B_F(\mu + (\pi_\#^1-\pi_\#^0)\gamma, \nu)
        =
        \frac{1}{2}\norm{A(\nu-\mu-(\pi_\#^1-\pi_\#^0)\gamma)}^2.
    \]
    On the other hand, for $E(\mu, \nu)=\frac{1}{2}\norm{\nu-\mu}_\Wave^2$ and a $\Wave \in \linear(\Masses; \Predual)$, we have
    \[
        V_{\ell c_2, L E}(\mu, \nu; \gamma)
        =
        \ell \int_{\Omega^2} c_2(x, y) \d\abs{\gamma}(x, y)
        +
        \frac{L}{2}\norm{\nu-\mu-(\pi_\#^1-\pi_\#^0)\gamma}_\Wave^2
    \]
    Therefore, $F$ is  $(L,\ell)$-smooth with respect to $E$ and $c_2$ if
    $
        A_*A \le L \Wave
        \quad\text{and}\quad
        \ell \ge 0.
    $
\end{example}

We also need $F$ to satisfy for any (bounded) $\mu \in \Masses$ and $\gamma \in \TwoPlansSpace$, the \emph{lower curvature bound}
\begin{equation}
    \label{eq:unbalanced:lower-curvature}
    \ellF \abs{\gamma}(c_2)
    \ge
    - B_F(\mu+(\pi_\#^1-\pi_\#^0)\gamma,\mu)
\end{equation}
The next lemma provides an example of the satisfaction of this property when $\mu$ is “observed” by $m$ linear “sensors” $a_i$, and the quality of the observation is measured by the fidelity functions $\phi_i$.
With $Y=\R^m$, this includes $F$ of \cref{ex:estimates:smoothness}, which satisfies \eqref{eq:unbalanced:lower-curvature} also through convexity.

\begin{lemma}\label{lemma:unbalanced:example-lower-curvature}
    The lower curvature bound \eqref{eq:unbalanced:lower-curvature} holds in the following cases:
    \begin{enumerate}[label=(\alph*)]
        \item $F$ is convex, in which case $\ellF=0$.
        \item Provided $\norm{\gamma} \le m_\gamma$, also $F(\mu)=\sum_{i=1}^m \phi_i(a_i(\mu))$, where $a_i \in \Predual$ is $L_{a_i}$-Lipschitz, and $\phi_i: \R \to \R$ has $L_{\phi_i'}$-Lipschitz derivative, ($i=1,\ldots,m$).
        In this case $\ellF = m_\gamma\sum_{i=1}^m L_{\phi_i'} L_{a_i}^2$.

        If the $L_{a_i}=L$ and $L_{\phi_i*}=L'$ are equal, we can take $\ellF = 2m_\gamma N_\psi L' L^2$,  where the maximum number of overlapping supports
        $
            N_\psi \defeq \max\{ \#P \mid y \in \Omega,\, P \subset \{1,\ldots,m\},\, a_k(y) \ne 0 \text{ for all } k \in P\}.
        $
    \end{enumerate}
\end{lemma}

\begin{proof}Let $\mu \in \Masses$, $\gamma \in \TwoPlansSpace$.
    Abbreviate $\Delta \defeq (\pi_\#^1-\pi_\#^0)\gamma$.
    If $F$ is convex, $B_F(\mu+\Delta,\mu) \ge 0$ by the definitions of the Bregman divergence and the convex subdifferential.

    Otherwise, by rearrangements, the Lipschitz assumptions, and Jensen's inequality, we deduce
    \begin{equation}
        \label{eq:unbalanced:tlip}
        \begin{split}
        \dualprod{F'(\mu + (\pi_\#^1-\pi_\#^0)\gamma) &- F'(\mu)}{(\pi_\#^1-\pi_\#^0)\gamma}
        =
        \sum_{i=1}^m [\phi_i'(a_i(\mu+\Delta))-\phi_i'(a_i(\mu))]a_i(\Delta)
        \\
        &
        \le
        \sum_{i=1}^m L_{\phi_i'} \abs{a_i(\Delta)}^2
        =
        \sum_{i=1}^m L_{\phi_i'} \adaptabs{\int_{\Omega^2}[a_i(y)-a_i(x)]\d\gamma(x,y)}^2
        \\
        &
        \le
        \sum_{i=1}^m L_{\phi_i'} \norm{\gamma}_\Meas \int_{\Omega^2}\abs{a_i(y)-a_i(x)}^2\d\abs{\gamma}(x,y)
        \\
        &
        \le
        \sum_{i=1}^m L_{\phi_i'} L_{a_i}^2 \norm{\gamma}_\Meas \int_{\Omega^2}\abs{y-x}^2\d\abs{\gamma}(x,y)
        =
        \Theta\norm{\gamma}_\Meas \abs{\gamma}(c_2),
        \end{split}
    \end{equation}
    where $\Theta=2\sum_{i=1}^m L_{\phi_i'} L_{a_i}^2$.
    Observing that at most $2N_\psi$ components of the sum inside the first integral above are nonzero for each $(x, y)$, we can refine this to $\Theta = 4N_\psi L' L^2$.

    Applying \eqref{eq:unbalanced:tlip} to $\mu+\Delta$ in place of $\mu$, and $t\gamma$ in place of $\gamma$, we then estimate
    \[
        \begin{split}
        -B_F(\mu+\Delta,\mu)
        &
        =
        F(\mu+\Delta) -  F(\mu) + \dualprod{F'(\mu+\Delta)}{-\Delta}
        =
        \int_0^1 \dualprod{F'(\mu+\Delta -t\Delta)-F'(\mu+\Delta)}{\Delta} \d t
        \\
        &
        \le
        \int_0^1 t \Theta \abs{\gamma}(c_2)\norm{\gamma}_\Meas \d t
        \le
        \frac{1}{2}\Theta m_\gamma \abs{\gamma}(c_2).
        \end{split}
    \]
    Inserting the expression for $\Theta$, we obtain the claim.
\end{proof}

Finally, it will be useful to have estimates of Lipschitz factors of the gradients of the values of $F'$.

\begin{lemma}\label{lemma:unbalanced:lip-diff-value-sensorA}
    With $F$ as in  \cref{lemma:unbalanced:example-lower-curvature}, assume each $a_i$ is Lipschitz-differentiable with the factors $L_{\grad a_i}$, and each $\phi_i'$ is bounded by some constant $M_i$.
    Let $v=F'(\mu)$ for some $\mu \in \Masses$.
    Then $\grad v$ is $\ell_{\grad v}$-Lipschitz with $\ell_{\grad v}=\sum_{i=1}^m L_{\grad a_i} M_i$.
    If the $L_{\grad a_i}=L$ and $M_i=M$ are independent of the index $i$, this can be improved to $\ell_{\grad v}=N_\psi L M$.
\end{lemma}
\begin{proof}We have $v = \sum_{i=1}^m \phi_i'(a_i(\mu))a_i$.
    Thus, $\grad v(x) = \sum_{i=1}^m \phi_i'(a_i(\mu))\grad a_i$, from where the claims follow.
\end{proof}

\section{Sliding forward-backward splitting}
\label{sec:fb}

With a particular interest in the instance \eqref{eq:intro:problem}, we now develop a forward-backward splitting method based on unbalanced optimal transport for the general problem
\begin{equation}
    \label{eq:fb:problem}
    \min_{\mu \in \Masses} F(\mu) + G(\mu),
\end{equation}
where $F$ satisfies several smoothness properties, while $G$ is a general convex, and possibly nonsmooth function.
In this section we do not assume the convexity of $F$.
We require that the differentials $F'(\mu) \in \DiffPredual$.
The assumption on $F'(\mu)$ could similarly be relaxed, and the differentiability could be relaxed to a bounded linear approximation property (encoded by bounds on integrals of Bregman divergences $B_{F'(\mu)}$, etc.).

In the next \cref{sec:fb:preparation}, we introduce the general form of our algorithm.
Then in \cref{sec:fb:assumptions}, we state and provide examples of the satisfaction of the basic subdifferential convergence theory of \cref{sec:fb:sub-convergence}.
We treat function value convergence in \cref{sec:sub}, and some fine details of the algorithms in \cref{sec:controls}.

\subsection{The algorithm}
\label{sec:fb:preparation}

On each step of our algorithm, we will, roughly speaking, take a forward step with respect to $F$, and a proximal step with respect to $G$, in the $V_{c_2,E}$ “metric”.
The step is determined by the marginal and transport components of the transport subdifferentials of \cref{sec:subdiff:subdiff}, although we do not make the connection explicit here.
Moreover, similarly to how a standard forward step evaluates the differential at the previous iterate, we also evaluate differentials at intermediate iterates.

\begin{rawalg}
    Let an energy $E: \Masses \times \Masses \to \R$, step length parameters $\theta, \tau > 0$, and tolerances $\{\nexxt\epsilon\}_{k \in \N} > 0$ be given.
    On each step $k \in \N$, we seek
    \begin{subequations}
    \label{eq:fb:oc0}
    \begin{equation}
        \label{eq:fb:oc0:vars}
        \left\{\begin{array}{ll}
        \text{a measure}& \nexxt\mu \in \Masses
        \quad\text{and}
        \\
        \text{a transport plan}&\nexxt\gamma \in \TwoPlansSpace
        \quad
        \text{(unbalanced from $\this\mu$ to $\nexxt\mu$),\quad and}
        \\
        \text{subderivatives}&
        \nexxt w \in \subdiff G(\nexxt\mu)
        \quad\text{and}
        \\
        &
        \nexxt{\distsub} \in \subdiff E(\this\mu-\pi_\#^0\nexxt\gamma, \freevar)(\nexxt\mu-\pi_\#^1\nexxt\gamma)
        \end{array}\right.
    \end{equation}
    that satisfy for the \emph{transported iterate}
    \begin{gather}
        \nonumber
        \this{\breve\mu} \defeq \this\mu + (\pi_\#^1-\pi_\#^0)\nexxt\gamma
        \\
    \shortintertext{as well as the derivatives}
        \nonumber
        \thisv \defeq F'(\this\mu)
        \quad\text{and}\quad
        \this{\breve v} \defeq F'(\this{\breve\mu}),
        \\
    \shortintertext{\term{the transport step}}
        \label{eq:fb:oc0:transport}
        y = x - \theta \tau \sign \nexxt\gamma(x, y)\grad \thisv(x)
        \quad\text{for}\quad \nexxt\gamma \text{-a.e. } (x, y),
        \\
    \shortintertext{\term{the marginal (insertion/weight-optimisation) step}}
        \label{eq:fb:oc0:marginal}
        -\nexxt\epsilon
        \le
        \nexxt{\tilde\epsilon} \defeq \tau[\this{\breve v} + \nexxt w] + \nexxt{\distsub}
        \le
        \nexxt\epsilon,
    \end{gather}%
    \end{subequations}%
    and, for some factors $\check C, \ellCurvature \ge 0$, the \term{remainder condition}
    \begin{gather}
        \label{eq:fb:remainder}
        \begin{split}
        \check C \nexxt\epsilon
        \ge
        \nexxt{\check\remainder}
        &
        \defeq
        - \dualprod{\nexxt{\tilde\epsilon}}{\this\mu- \nexxt\mu}
        - \tau\ellCurvature\abs{\nexxt\gamma}(c_2)
        \\
        \MoveEqLeft[-1]
        + \int_{\Omega^3}
            \tau \iprod{\grad \this v(x)}{x-y}
            +\nexxt\omega(x)-\nexxt\omega(y)
        \d\nexxt\gamma(x,y).
        \end{split}
    \end{gather}
\end{rawalg}

\begin{remark}[No transport]
   The condition \cref{eq:fb:oc0:transport} holds for $\nexxt\gamma=0$.
   In this case, in \eqref{eq:fb:remainder}, also $\nexxt{\check\remainder} = - \dualprod{\nexxt{\tilde\epsilon}}{\this\mu - \nexxt\mu}$ can be controlled by solving the marginal step to sufficiently high quality.
\end{remark}

\begin{remark}[Explanation of the conditions]
    \label{rem:fb:oc-explanation}
    Let us unpack \eqref{eq:fb:oc0} and \eqref{eq:fb:remainder}:
    \begin{enumerate}\item Part \eqref{eq:fb:oc0:vars} establishes the spaces of variables, and the symbols for select subderivatives of the functions $G$ and $E$.
        \item Part \eqref{eq:fb:oc0:transport} is a \term{transport step}.
        It initialises $\nexxt\gamma=\sum_i \beta_i \delta_{(x_i, y_i)}$ for some $\beta_i \in \R$, enforcing that each source point $x_i$ and target point $y_i$ realise the spatial gradient descent (or ascent) update
        \[
            y_i = x_i - \sign\beta_i\theta\tau \grad \thisv(x_i).
        \]
        To weights $\beta_i$ is mainly constrained by \cref{eq:fb:remainder}.
        In practise, we start with $\gamma$ such that $\pi_\#^0\nexxt\gamma=\sum_{i=1} \beta_i \delta_{x_i}=\this\mu$, solve \eqref{eq:fb:oc0:marginal}, and reduce the weights $\beta_i$ and repeat the process, if needed.

        \item Part \eqref{eq:fb:oc0:marginal} is an inexact forward-backward \term{marginal step} with respect to the energy $E$ to form $\nexxt\mu$ starting from $\this{\breve\mu}$. Indeed, the conditions expand as  $\norm{\nexxt{\tilde\epsilon}}_\infty \le \nexxt\epsilon$ with
        \[
            \nexxt{\tilde\epsilon} \in \tau[F'(\this{\breve\mu}) + \subdiff G(\nexxt\mu)] + \subdiff E(\this\mu-\pi_\#^0\nexxt\gamma, \freevar)(\nexxt\mu-\pi_\#^1\nexxt\gamma)
        \]
        In practise, given $\this{\breve \mu}=\sum_{i=1}^n \alpha_i \delta_{y_i}$, we form $\nexxt\mu$ through the addition (“insertion”) of new spikes $\sum_{i=n+1}^m \alpha_i \delta_x$, and the subsequent optimisation of the weights $\alpha_i$ through a finite-dimensional variant of \eqref{eq:fb:oc0:marginal}.
        This procedure is similar to \cite{tuomov-pointsource} and conditional gradient methods. The exact insertion mechanism depends on the energy $E$, and on the function $G$, as we elaborate below.

        \item Part \eqref{eq:fb:remainder} is a \emph{remainder control} condition. The first term of $\nexxt{\check\remainder}$ further controls the inexactness of the marginal step.
        The remaining terms are related to the curvature of the problem along $\nexxt\gamma$.
        They can be given various alternative forms through \eqref{eq:fb:oc0:marginal}, some more suggestive of the term “curvature” than the form given here.
        We, however, find, this form the easiest to control, as we will discuss in \cref{sec:controls}.
        The factor $\ellCurvature$ will also appear in our step length conditions.
        Although its notation and use suggest that it would be a uniform Lipschitz factor of $\grad\this v$ of $k \in \N$, our theory does not enforce this: any deviation is compensated for by \eqref{eq:fb:remainder}.
    \end{enumerate}
\end{remark}

\begin{algorithm}[t!]
    \caption{Rough algorithm for the point source localisation problem}
    \label{alg:fb:rough}
    \begin{algorithmic}[1]
        \Require Fréchet-differentiable $F: \Masses \to \R$,  $G=\alpha \norm{\freevar}_\Meas + \delta_{\ge 0}$ for some $\alpha > 0$, and either $E=E_\Meas$ or $E=E_\Wave$ for a self-adjoint and positive semi-definite $\Wave \in \linear(\Masses; \DiffPredual)$.
        (Adaptation to $G=\alpha\norm{\freevar}_\Meas$ only depends on replacing \cref{alg:fb:insert-and-remove,alg:fb:insert-and-remove-radon}; see \cite{tuomov-pointsource}.)
        Tolerance multipliers $\check C>0$ and $\kappa \in (0, 1)$, and base tolerances $\{\nexxt\epsilon\}_{k \in \N}$.
        Step length parameters $\tau,\theta>0$.
        Maximum number $N_\gamma \in \N$ of transport attempts.
        \Ensure
        \cref{ass:fb:all,ass:fb:subdiff} (subdifferential convergence, \cref{thm:fb:subdiff-conv}); or
        \\
        \cref{ass:fb:all,ass:sub:energy,ass:sub:rebalancing} (function value convergence, \cref{thm:sub:convergence:function-ergodic} or \cref{cor:sub:bounded}).
        \State Choose an initial $\mu^0 \in \DiscreteMasses$.
        \For{$k \in \N$}
            \State
                Set $\this v \defeq F'(\this\mu)$.\label{step:fb:rough:v}
            \State
                Decompose $\sum_{i=1}^n \alpha_i\delta_{x_i} \defeq \this\mu$.
            \State
                Form\label{step:fb:rough:gamma-init}
                $
                    \nexxt\gamma=\sum_{i=1}^n \beta_i\delta_{(x_i, y_i)}
                $
                with
                $
                    \beta_i \defeq \alpha_i,
                $
                and
                $
                    y_i \defeq x_i - s_i\theta\tau \grad \thisv(x_i).
                $
                \Comment{Ensures \eqref{eq:fb:oc0:transport}}
            \Repeat\label{step:fb:rough:transport-adaptation}
                \State
                    Form
                    $
                        \this{\breve\mu} \defeq \this\mu + (\pi_\#^1-\pi_\#^0)\nexxt\gamma
                        =\sum_{i=1}^n\left( \beta_i \delta_{y_i} + (\beta_i-\alpha_i)\delta_{x_i}\right).
                    $
                \State
                    Set $\this{\breve v} \defeq F'(\this{\breve\mu})$.\label{step:fb:rough:vbreve}
                \If{$E=E_\Wave$}
                    \State
                    $
                        \nexxt\mu \defeq
                        \textproc{insert\_and\_adjust}(\this{\breve\mu}, \this{\breve v}, \alpha, \tau, \kappa \nexxt{\epsilon}, \Wave)
                    $
                    \Comment{\cref{alg:fb:insert-and-remove} for \eqref{eq:fb:oc0:marginal}}
                \ElsIf{$E=E_\Meas$}
                    \State
                    $
                        \nexxt\mu \defeq
                        \textproc{insert\_and\_adjust\_radon}(\this{\breve\mu}, \this{\breve v}, \alpha, \tau, \kappa \nexxt{\epsilon})
                    $
                    \Comment{\cref{alg:fb:insert-and-remove-radon} for \eqref{eq:fb:oc0:marginal}}
                \EndIf
                \If{\eqref{eq:fb:remainder} (or \eqref{eq:sub:remainder}, or \eqref{eq:sub:remainder-tight}, depending on mode of convergence sought) does not hold}
                    \State Reduce the weights $\beta_i$ in $\nexxt\gamma$.\label{step:fb:rough:reduce}
                    \Comment{\cref{sec:controls}}
                \EndIf
                \If{some $\beta_i$ was changed above, and this loop has been executed $N_\gamma$ times}
                    \State Set $\beta_i=0$ for all $i \in \{1,\ldots,n\}$.
                    \Comment{Failsafe reduction to non-transporting step}
                \EndIf
            \Until{this iteration of the loop did not change any of the weights $\beta_i$.}
            \State
                Prune spikes with zero weights from the representation of $\nexxt\mu$.
                \label{step:fb:rough:prune}
            \State If desired, perform spike merging and other heuristics on $\nexxt\mu$.
            \Comment{See \cite{tuomov-pointsource}.}
        \EndFor
    \end{algorithmic}
\end{algorithm}

\begin{algorithm}[t]
    \caption{Point insertion and weight adjustment for $\Wave$-marginal term \cite{tuomov-pointsource}}
    \label{alg:fb:insert-and-remove}
    \begin{algorithmic}[1]
        \Require $\breve\mu \in \DiscreteMasses$, $\breve v \in \Predual$, $\alpha, \tau,\epsilon > 0$ on a domain $\Omega \subset \R^n$. A self-adjoint and positive semi-definite $\Wave \in \linear(\Masses; \DiffPredual)$.
        \Function{insert\_and\_adjust}{$\breve\mu$, $\breve v$, $\alpha$, $\tau$, $\epsilon$,  $\Wave$}
            \State Decompose $\sum_{x \in S} \alpha_x \delta_x \defeq \breve\mu$.
            \State Initialise $\mu \defeq \breve\mu$.
            \Repeat
                \State Form $\vec\eta \defeq ([\tau \breve v - \Wave\breve\mu](x))_{x \in S} \in \R^{\#S}$ and $D \defeq ([\Wave\delta_y](x))_{x, y \in S} \in \R^{\#S \times \#S}$.
                \State%
                Find $\vec\beta=(\beta_x)_{x \in S} \in \R^{\#S}$ solving $\min f$
                to the accuracy
                $
                    \inf_{g \in \subdiff f(\vec\beta)} \norm{g}_\infty \le \epsilon/(1 + \norm{\vec\beta}_1)
                $
                for
                \abovedisplayskip=5pt%
                \belowdisplayskip=-5pt%
                \[
                    f(\vec\beta) \defeq
                    \frac{1}{2}\iprod{\vec \beta}{D \vec \beta}
                    + \iprod{\vec \eta}{\vec \beta} + \tau\alpha \norm{\vec \beta}_1 + \delta_{\ge 0}(\vec\beta).
                \]
                \label{step:fb:insert-and-remove:findim}
                \State Let $\mu \defeq \sum_{x \in S} \beta_x \delta_x$
                \State Find $\opt x$ (approximately) minimising $\tau\breve v + \Wave(\mu-\breve\mu)$.
                \Comment{For example, branch-and-bound.}
                \State Let $S \defeq S \union \{\opt x\}$
                \Comment{$\optx$ will only be inserted into $\mu$ if the next bounds check fails.}
            \Until $\tau\breve v(\opt x ) + \theta + [\Wave(\mu-\breve\mu)](\opt x) \ge -\epsilon$
            \State \Return $\mu$
        \EndFunction
    \end{algorithmic}
\end{algorithm}

\begin{algorithm}[t]
    \caption{Point insertion and weight adjustment for Radon marginal term}
    \label{alg:fb:insert-and-remove-radon}
    \begin{algorithmic}[1]
        \Require $\breve\mu \in \DiscreteMasses$, $\breve v \in \Predual$, $\tau,\alpha,\epsilon > 0$ on a domain $\Omega \subset \R^n$.
        \Function{insert\_and\_adjust\_radon}{$\breve\mu$, $\breve v$, $\alpha$, $\tau$, $\epsilon$}
            \State Find $\opt x$ (approximately) minimising $\breve v$.
                \Comment{For example, branch-and-bound.}
            \State Decompose $\sum_{x \in S} \theta_x \delta_x \defeq \breve\mu + 0\delta_{\opt x}$.
            \State Form $\vec\eta \defeq (\tau \breve v(x))_{x \in S} \in \R^{\#S}$.
            \State Find $\vec\beta=(\beta_x)_{x \in S} \in \R^{\#S}$ solving $\min f$
            to the accuracy\label{step:fb:insert-and-remove-radon:sub}
            $
                \inf_{g \in \subdiff f(\vec\beta)} \norm{g}_\infty \le \epsilon/(1 + \norm{\vec \beta}_1)
            $
            for
            \abovedisplayskip=5pt
            \belowdisplayskip=5pt
            \[
                f(\vec\beta) \defeq
                \frac{1}{2}\norm{\vec \beta - \vec\theta}_1^2
                + \iprod{\vec \eta}{\vec \beta} + \tau\alpha \norm{\vec \beta}_1 + \delta_{\ge 0}(\vec\beta).
            \]
            \State \Return $\mu \defeq \sum_{x \in S} \beta_x \delta_x$
        \EndFunction
    \end{algorithmic}
\end{algorithm}

We present these observations as \cref{alg:fb:rough}. We restrict attention to  $G=\alpha \norm{\freevar}_\Meas + \delta_{\ge 0}$, although the extension to  $G=\alpha \norm{\freevar}_\Meas$ merely requires altering the subroutine for the marginal step \eqref{eq:fb:oc0:marginal}.
By construction, the algorithm ensures \cref{eq:fb:oc0,eq:fb:remainder} on each step $k \in \N$.
Moreover, by the failsafe mechanism in the algorithm, the loop on \cref{step:fb:rough:transport-adaptation} terminates in at most $N_\gamma$ steps, irrespective of how $\nexxt\gamma$ is reduced to satisfy \eqref{eq:fb:remainder}.
\Cref{alg:fb:rough} remains rough in the sense that it does not indicate the exact way to reduce the weights $\beta_i$ on \cref{step:fb:rough:reduce} to satisfy \eqref{eq:fb:remainder}.
A simple approach is to set all weights to zero if the initial attempt fails.
We treat another possibility in \cref{sec:controls}, after proving convergence.

When $E=E_\Wave$ for a self-adjoint and positive semi-definite $\Wave \in \linear(\Masses; \DiffPredual)$, we use \cref{alg:fb:insert-and-remove} from \cite{tuomov-pointsource} for \eqref{eq:fb:oc0:marginal}.
That method inserts, possibly multiple, new spikes $\beta_{\bar x}\delta_{\bar x}$ into $\this{\breve\mu}$ to form $\nexxt\mu$, and, as a finite-dimensional convex subproblem, optimises the weights of the spikes, until the condition is satisfied.
The algorithm, including modifications to satisfy \eqref{eq:fb:oc0:marginal} for $G=\alpha\norm{\freevar}_\Meas$ without the positivity constraint, are further discussed in \cite{tuomov-pointsource}.

When $E=E_\Meas$, the subderivative $\nexxt\omega$ has enough degrees of freedom to satisfy \eqref{eq:fb:oc0:marginal} with (at most) a single insertion, as the next lemma shows.

\begin{lemma}Let  $G=\alpha \norm{\freevar}_\Meas + \delta_{\ge 0}$ and $E=E_\Meas$.
    Then, \cref{alg:fb:insert-and-remove-radon} solves \eqref{eq:fb:oc0:marginal}, provided $\this{\breve\mu} \in \DiscreteMassesNonNeg $ (as ensured by \cref{alg:fb:rough}).
\end{lemma}

\begin{proof}[Sketch of proof]
    We consider exact minimisers $\bar x$ in \cref{alg:fb:insert-and-remove-radon}; inexact minimisers can be treated by more careful analysis of the tolerances.
    We have
    \begin{equation}
        \label{eq:fb:omega-radon}
        \begin{split}
        \nexxt\omega
        &
        \in \subdiff E(\this\mu-\pi_\#^0\nexxt\gamma, \freevar)(\nexxt\mu-\pi_\#^1\nexxt\gamma)
        \\
        &
        =
        \{
            \norm{\nexxt\mu-\this{\breve\mu}}_\Meas \phi
            \mid
            \phi \in \DiffPredual,\,
            \norm{\phi}_\infty \le 1,\,
            \dualprod{\nexxt\mu-\this{\breve\mu}}{\phi}=\norm{\nexxt\mu-\this{\breve\mu}}_\Meas
        \}.
        \end{split}
    \end{equation}
    Write $n \defeq \norm{\nexxt\mu-\this{\breve\mu}}_\Meas$, and, for brevity $\epsilon \defeq \nexxt\epsilon$.
    By assumption, $\this{\breve\mu} \ge 0$. Moreover, $\nexxt w \in \subdiff G(\nexxt\mu) \ne \emptyset$ exists if and only if $\nexxt\mu \ge 0$.
    Therefore, writing $A^c \defeq \Omega \setminus A$ for the $\Omega$-complement of a set $A$, four cases can occur when we expand \eqref{eq:fb:oc0:marginal}:
    \begin{enumerate}[label=(\alph*),nosep]
        \item\label{item:sub:radon-alg-just:insertion}
        Insertion or weight increase: $x \in \supp\nexxt \mu \isect \supp(\nexxt\mu-\this{\breve\mu})^+$. Then $-\epsilon \le \tau(\this{\breve v}(x) + \alpha) + n \le \epsilon$.

        \item\label{item:sub:radon-alg-just:decrease}
        Weight decrease: $x \in \supp(\nexxt\mu-\this{\breve\mu})^-$:
        Then $0 \in \tau\this{\breve v}(x) + (-\infty, \tau\alpha] - n + [-\epsilon,\epsilon]$, i.e., $n \le \tau(\this{\breve v}(x) + \alpha) + \epsilon$.

        \item\label{item:sub:radon-alg-just:nochange-nomass}
        No change, no mass: $x \in \supp(\nexxt\mu-\this{\breve\mu})^c \isect (\supp\nexxt \mu)^c$:
        Then $0 \in \this{\breve v}(x) + (-\infty, \alpha] + [-n, n] + [-\epsilon,\epsilon]$, i.e., $\tau(\alpha + \this{\breve v}(x)) \ge -n-\epsilon$.

        \item\label{item:sub:radon-alg-just:nochange-mass}
        No change, mass: $x \in \supp(\nexxt\mu-\this{\breve\mu})^c \isect \supp\nexxt \mu$:
        Then $0 \in \this{\breve v}(x) + \alpha + [-n, n] + [-\epsilon,\epsilon]$, i.e., $-n-\epsilon \le \tau(\alpha + \this{\breve v}(x)) \le n + \epsilon$.
    \end{enumerate}
    Thus, inserting a new spike at a minimiser $\opt x$ of $\this{\breve v}$, as done by \cref{alg:fb:insert-and-remove-radon}, we must by \cref{item:sub:radon-alg-just:insertion} have $n \in -\tau(\alpha + \this{\breve v}(\opt x)) + [-\epsilon,\epsilon]$.
    Then, by \cref{item:sub:radon-alg-just:nochange-nomass}, \eqref{eq:fb:oc0:marginal} holds at any point outside $\supp\nexxt \mu$, while either \cref{item:sub:radon-alg-just:decrease} or \cref{item:sub:radon-alg-just:nochange-mass} can be satisfied in $\supp\nexxt\mu \setminus\{\opt x\}$ by controlling the weights.
    This is handled by the finite-dimensional weight optimisation step of \cref{alg:fb:insert-and-remove-radon}, which solves \eqref{eq:fb:oc0:marginal} on $S=\supp\nexxt\mu$, once the latter is determined.
\end{proof}

\begin{remark}[Admissible transport plans]
    \label{rem:fb:admissible}
    \Cref{alg:fb:rough} ensures $\pi_\#^0\nexxt\gamma \ll \abs{\this\mu}$, more precisely $0 \le d\pi_\#^0\nexxt\gamma/d\this\mu \le 1$.
    If we further enforce (around \cref{step:fb:rough:reduce}) $\beta_i=0$ if $x_i \not \in \supp\nexxt\mu$, then also $\pi_\#^1\nexxt\gamma \ll \abs{\nexxt\mu}$.
    Therefore, $\nexxt\gamma \in \TwoPlans(\this\mu, \nexxt\mu) \subsetneq \TwoPlansSpace$ for the admissible set of transport plans (see \cref{sec:transport-new:definitions})
    \[
        \TwoPlans(\mu, \nu) \defeq \{
            \gamma \in \TwoPlansSpace
            \mid
            \pi_\#^0\gamma \ll \mu,\,
            \pi_\#^1\gamma \ll \nu,\,
            \norm{\nexxt\gamma} \le \norm{\this\mu}
        \}
    \]
    However, as we do not work with the marginalised transport costs $\UMK_{c_2,E,\TwoPlans}$, it is also not necessary to work with admissible transport plans.
\end{remark}

\subsection{Assumptions}
\label{sec:fb:assumptions}

In the next subsection we prove a very weak but easily obtained form of convergence for \cref{alg:fb:rough}, in implicit form \cref{eq:fb:oc0,eq:fb:remainder}.
The following collects our core assumptions, shared with \cref{sec:sub}:

\begin{assumption}\label{ass:fb:all}
    We have:
    \begin{enumerate}[label=(\roman*)]
        \item\label{item:fb:all:e}
        \textbf{Convex energy:}
        $E: \Masses \times \Masses \to [0, \infty]$ is convex in the second parameter and $E(\nu,\nu)=0$ for all $\nu \in \Masses$.
        \item\label{item:fb:all:g}
        \textbf{Convex regulariser:}
        $G: \Masses \to \extR$ is convex, proper, and lower semicontinuous.
        \item\label{item:fb:all:f}
        \textbf{Smooth data term:}
        $F: \Masses \to \R$ is Fréchet differentiable with $F'(\mu) \in \DiffPredual$ for all $\mu$, and $(L,\ell)$-smooth with respect to $B_J$ and $c_2$, i.e., for all $\mu,\nu \in \Masses$ and $\gamma \in \TwoPlansSpace$ we have
        \[
            B_F(\mu + (\pi_\#^1-\pi_\#^0)\gamma, \nu)
            \le
            V_{\ell c_2, L E}(\mu, \nu; \gamma).
        \]

        \item\label{item:fb:all:tolerances}
        \textbf{Vanishing tolerances:}
        The tolerances $\{\nexxt\epsilon\}_{k \in \N} \subset [0, \infty)$ satisfy
        $
            \lim_{N \to \infty} \frac{1}{N}\sum_{k=0}^{N-1} \nexxt\epsilon = 0.
        $
    \end{enumerate}
\end{assumption}

An advantage of $E_\Meas$ over $E_\Wave$ is that for the former, \cref{item:fb:all:f} holds if $F'$ is $L$-Lipschitz with respect to the Radon norm. The $\Wave$-marginal energy depends on a more difficult condition:

\begin{example}\label{ex:fb:assumptions-quadratic}
    Let $F(\mu)=\frac{1}{2}\norm{A\mu-b}_Y^2$ with $A \in \linear(\Masses; Y)$ for a Hilbert space $Y$, and $E=E_\Wave$.
    Then the $(L, \ell)$-smoothness of \cref{ass:fb:all}\,\cref{item:fb:all:f} is treated in \cref{ex:estimates:smoothness} and holds when $A_*A \le L \Wave$ and $\ell \ge 0$.
\end{example}

In this section, we require the following additions to \cref{ass:fb:all}.

\begin{assumption}\label{ass:fb:subdiff}
    The following hold:
    \begin{enumerate}[label=(\roman*)]
        \item
        \label{item:fb:subdiff:curvature}
        \textbf{Curvature lower bound:}
        For some $\ellF \ge 0$, for all $k \in \N$, we have
        $
        \ellF \abs{\nexxt\gamma}(c_2)
        \ge
        - B_F(\this{\breve \mu},\this\mu)
        $
        (e.g., $F$ is convex).

        \item\label{item:fb:subdiff:tau}
        \textbf{Step length bounds:}
        The step length parameters $\tau,\theta>0$ and the factors $L,\ell,\ellF,\ellCurvature$ satisfy $\tau L < 1$ and $\theta \tau[\ell+\ellCurvature+\ellF] < 2$.

        \item\label{item:fb:subdiff:e-f}
        \textbf{Marginal continuity:}
        $F'$ is continuous with respect to $E$ in the sense that,
        for any sequence $\{(\this\mu, \nexxt\gamma)\}_{k \in \N} \subset \Masses \times \TwoPlansSpace$,
        the convergence $E(\this\mu - \pi_\#^0\nexxt\gamma, \nexxt\mu - \pi_\#^1\nexxt\gamma) \to 0$
        implies $F'(\this\mu + (\pi_\#^1-\pi_\#^0)\nexxt\gamma) - F'(\nexxt\mu) \to 0$.

        \item\label{item:fb:subdiff:e-fenchel}
        \textbf{Inverse continuity of conjugate energy:}
        For any sequence $\{\this\omega\}_{k \in \N} \subset \Predual$, the convergence $\marginalEnergy{k}^*(\this\omega) \to 0$ implies $\norm{\this\omega}_{\Predual} \to 0$.

        \item\label{item:fb:subdiff:f}
        \textbf{Objective lower bound:}
        $\inf [F+G] > -\infty$.
    \end{enumerate}
\end{assumption}

Besides convex $F$, examples of the curvature lower bound are provided in \cref{lemma:unbalanced:example-lower-curvature}.

\begin{example}For $E(\nu,\mu)=\frac{1}{2}\norm{\freevar}_\Wave^2$, where $\Wave \in \linear(\Masses; \Predual)$ is self-adjoint and positive semi-definite, we have $\marginalEnergy{k}(\mu)=\frac{1}{2}\norm{\mu}_\Wave^2$.
    If $F(\mu)=\frac{1}{2}\norm{A\mu-b}_Y^2$ in a Hilbert space $Y$, then \cref{ass:fb:subdiff}\,\cref{item:fb:subdiff:e-f} holds under the assumption $A_*A \le L \Wave$ already seen in \cref{ex:fb:assumptions-quadratic}.
    Let then $c \defeq \norm{\Wave}_{\linear(\Masses; \Predual)}$.
    By the properties of conjugates (e.g., \cite[Lemmas 5.4 and 5.7]{clasonvalkonen2020nonsmooth}), we have
    \[
        \frac{2}{c}\norm{\omega}_{\Predual}^2
        =
        \frac{c}{2}\norm{\omega/c}_{\Predual}^2
        =
        \left(\frac{c}{2}\norm{\freevar}_{\Masses}^2\right)^*(\omega)
        \le
        \left(\frac{1}{2}\norm{\freevar}_{\Wave}^2\right)^*(\omega)
        = \marginalEnergy{k}^*(\omega).
    \]
    Thus, also \cref{ass:fb:subdiff}\,\cref{item:fb:subdiff:e-fenchel} holds.
\end{example}

\begin{example}For $E(\nu,\mu)=\frac{1}{2}\norm{\mu-\nu}_\Meas^2$, we have $\marginalEnergy{k}(\mu)=\frac{1}{2}\norm{\mu}_\Meas^2$ and $\marginalEnergy{k}^*(\omega)=\frac{1}{2}\norm{\omega}_{\Predual}^2$; see \cite[Lemma 5.4]{clasonvalkonen2020nonsmooth}.
    \Cref{ass:fb:subdiff}\,\cref{item:fb:subdiff:e-f,item:fb:subdiff:e-fenchel} clearly hold if $F'$ is, e.g., Lipschitz in the Radon norm.
\end{example}

Recalling the definition of $V$ in \cref{sec:transport-new:definitions}, we will measure the distance between iterates with
\begin{equation}
    \label{eq:fb:metric-x-y}
    \fbmetricM{k}(\mu, \nu; \gamma)
    \defeq
    V_{(2\inv\theta -\tau[\ell+\ellCurvature+\ellF])c_2, (1-\tau L)E}(\mu, \nu; \gamma).
\end{equation}
To measure the convergence of subdifferentials, we use the Fenchel pre-conjugate $\marginalEnergy{k}^*: \Predual \to \extR$ of
\begin{equation}
    \label{eq:fb:marginalenergy}
    \marginalEnergy{k}(\mu) \defeq E(\this\mu - \pi_\#^0\nexxt\gamma, \mu + \this\mu - \pi_\#^0\nexxt\gamma),
\end{equation}
already introduced in \cref{thm:unbalanced:twocost-three-point:reverse}.
The following bound is immediate:

\begin{lemma}\label{lemma:fb:v01-lower-bound}
    Suppose \cref{ass:fb:all}\,\cref{item:fb:all:e} and \ref{ass:fb:subdiff}\,\cref{item:fb:subdiff:tau,item:fb:subdiff:curvature} hold.
    Then $\marginalEnergy{k}^* \ge 0$ and
    \[
        \fbmetricM{k}(\this\mu, \nexxt\mu; \nexxt\gamma) \ge
        \epsilon V_{c_2,E}(\this\mu, \nexxt\mu; \nexxt\gamma)
        \ge \epsilon \MK_{c_2,E}(\this\mu, \nexxt\mu)
        \ge 0
    \]
    for $\epsilon \defeq \min\{1 - \tau L, 2\inv\theta - \tau[\ell + \ellF+\ellCurvature]\}>0$.
\end{lemma}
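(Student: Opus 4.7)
The plan is to unfold the definition of $\fbmetricM{k}$, absorb the curvature terms using the hypothesis \eqref{eq:fb:oc0:curvature}, and then factor out $\epsilon$ from both the transport integral and the marginal energy term to recover $V_{c_2,E}$. Finally, passing from $V_{c_2,E}$ to $\UMK_{c_2,E}$ is immediate by the definition of the latter as an infimum over transport plans, and non-negativity is inherited from $c_2 \ge 0$ and $E \ge 0$.

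In more detail, I first write out, using the definition of $V$,
\[
    V_{(2\inv\theta -\tau[\ell+\ellRemainder])c_2,\,(1-\tau L)E}(\this\mu,\nexxt\mu;\nexxt\gamma)
    = (2\inv\theta -\tau[\ell+\ellRemainder])\,\abs{\nexxt\gamma}(c_2)
    + (1-\tau L)\,E(\this\mu - \pi_\#^0\nexxt\gamma, \nexxt\mu - \pi_\#^1\nexxt\gamma),
\]
and then use \eqref{eq:fb:oc0:curvature} together with $\this{\breve\mu}=\this\mu+(\pi_\#^1-\pi_\#^0)\nexxt\gamma$ to bound
\[
    -\tau\bigl[\curvature_F(\this\mu,\nexxt\gamma)+\curvature_F(\nexxt\mu,\reverse_\#\nexxt\gamma)-B_F(\this{\breve\mu},\this\mu)\bigr]
    \ge -2\tau\ellCurvature\,\abs{\nexxt\gamma}(c_2).
\]
Adding these two estimates gives
\[
    \fbmetricM{k}(\this\mu,\nexxt\mu;\nexxt\gamma)
    \ge (2\inv\theta -\tau[\ell+\ellRemainder+2\ellCurvature])\,\abs{\nexxt\gamma}(c_2)
    + (1-\tau L)\,E(\this\mu - \pi_\#^0\nexxt\gamma, \nexxt\mu - \pi_\#^1\nexxt\gamma).
\]

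By \cref{ass:fb:subdiff}\,\cref{item:fb:subdiff:tau}, both coefficients are at least $\epsilon>0$, so factoring out $\epsilon$ and using $E\ge 0$ (from \cref{ass:fb:all}\,\cref{item:fb:all:e}) together with $c_2\ge 0$ yields
\[
    \fbmetricM{k}(\this\mu,\nexxt\mu;\nexxt\gamma)
    \ge \epsilon\bigl[\abs{\nexxt\gamma}(c_2) + E(\this\mu - \pi_\#^0\nexxt\gamma, \nexxt\mu - \pi_\#^1\nexxt\gamma)\bigr]
    = \epsilon V_{c_2,E}(\this\mu,\nexxt\mu;\nexxt\gamma).
\]
The bound $V_{c_2,E}(\this\mu,\nexxt\mu;\nexxt\gamma) \ge \UMK_{c_2,E}(\this\mu,\nexxt\mu)$ is the definition of $\UMK$, and $\UMK_{c_2,E}\ge 0$ follows from the non-negativity of the integrand and of $E$. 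There is no real obstacle here; the proof is essentially an algebraic rearrangement, and the only place where hypotheses are used is in controlling the curvature term and in ensuring $\epsilon>0$.
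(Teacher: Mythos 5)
Your proof is correct and is exactly the "immediate" computation the paper has in mind (the paper states this lemma without proof, prefacing it with "The following bound is immediate"): unfold the definition \eqref{eq:fb:metric-x-y}, absorb the curvature terms via \eqref{eq:fb:oc0:curvature}, lower-bound both coefficients by $\epsilon=\min\{1-\tau L,\,2\inv\theta-\tau[\ell+\ellRemainder+2\ellCurvature]\}>0$ using \cref{ass:fb:subdiff}\,\cref{item:fb:subdiff:tau} and the non-negativity of $c_2$ and $E$, and conclude with the definition of $\UMK_{c_2,E}$ as an infimum. No gaps.
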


\begin{proof}As $\marginalEnergy{k}(0)=0$ by \cref{ass:fb:all}\,\cref{item:fb:all:e}, we have, from the definition of the Fenchel conjugate, that $\marginalEnergy{k}^* \ge 0$.
    The bounds on $\fbmetricM{k}$ follow from the step length conditions.
\end{proof}

\subsection{Monotonicity and subdifferential convergence}
\label{sec:fb:sub-convergence}

We can now prove a transport-adapted version of a standard quasi-monotonicity result.

\begin{lemma}[Value quasi-monotonicity]
    \label{lemma:fb:monotonicity}
    Suppose \cref{ass:fb:all}\,\cref{item:fb:all:e,item:fb:all:g,item:fb:all:f} and \ref{ass:fb:subdiff}\,\cref{item:fb:subdiff:curvature} hold, and that $k \in \N$ and $(\this\mu, \nexxt\gamma) \in \Masses \times \TwoPlansSpace$ are given.
    If \cref{eq:fb:oc0} holds, then
    \begin{equation}
        \label{eq:fb:monotonicity}
        \tau [F+G](\nexxt\mu)
        + \marginalEnergy{k}^*(\nexxt\omega)
        + \fbmetricM{k}(\this\mu, \nexxt\mu; \nexxt\gamma)
        \le
        \tau [F+G](\this\mu)
        + \nexxt{\check\remainder}.
    \end{equation}
\end{lemma}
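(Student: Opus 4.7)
The plan is to chain two first-order estimates (smoothness of $F$ and convexity of $G$) and then use the algorithmic optimality conditions (fb:oc0:marginal) and (fb:oc0:transport) to reassemble the target objects $\marginalEnergy{k}^*(\nexxt\omega)$, $\fbmetricM{k}$, and $\nexxt{\check\remainder}$. The nonnegativity $\marginalEnergy{k}^*\ge 0$ is immediate: by \cref{ass:fb:all}\,\cref{item:fb:all:e} we have $\marginalEnergy{k}(0)=E(\this\mu-\pi_\#^0\nexxt\gamma,\this\mu-\pi_\#^0\nexxt\gamma)=0$, so the Fenchel pre-conjugate satisfies $\marginalEnergy{k}^*(\omega)\ge\dualprod{\omega}{0}-\marginalEnergy{k}(0)=0$ for every admissible $\omega$.

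For the main inequality, I would first apply the $(L,\ell)$-smoothness of $F$ at $\this\mu$ along $\nexxt\gamma$ to obtain
\[
    F(\nexxt\mu)\le F(\this{\breve\mu})+\dualprod{\this{\breve v}}{\nexxt\mu-\this{\breve\mu}}+V_{\ell c_2,LE}(\this\mu,\nexxt\mu;\nexxt\gamma),
\]
then rewrite $F(\this{\breve\mu})-F(\this\mu)=\dualprod{\this{\breve v}}{(\pi_\#^1-\pi_\#^0)\nexxt\gamma}-B_F(\this{\breve\mu},\this\mu)$ via the Bregman identity. Summing with the subgradient inequality $G(\nexxt\mu)-G(\this\mu)\le\dualprod{\nexxt w}{\nexxt\mu-\this\mu}$ and multiplying by $\tau$ yields
\[
    \tau[F+G](\nexxt\mu)-\tau[F+G](\this\mu)
    \le \tau\dualprod{\this{\breve v}+\nexxt w}{\nexxt\mu-\this\mu}
    -\tau B_F(\this{\breve\mu},\this\mu)
    +\tau V_{\ell c_2,LE}(\this\mu,\nexxt\mu;\nexxt\gamma).
\]

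Next I would use the marginal step \cref{eq:fb:oc0:marginal} to substitute $\tau(\this{\breve v}+\nexxt w)=\nexxt{\tilde\epsilon}-\nexxt\omega$, split $\nexxt\mu-\this\mu=(\nexxt\mu-\this{\breve\mu})+(\pi_\#^1-\pi_\#^0)\nexxt\gamma$, and invoke the Fenchel--Young equality
\[
    \dualprod{\nexxt\omega}{\nexxt\mu-\this{\breve\mu}}=\marginalEnergy{k}^*(\nexxt\omega)+E(\this\mu-\pi_\#^0\nexxt\gamma,\nexxt\mu-\pi_\#^1\nexxt\gamma),
\]
valid since $\nexxt\omega\in\subdiff\marginalEnergy{k}(\nexxt\mu-\this{\breve\mu})$. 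This moves $\marginalEnergy{k}^*(\nexxt\omega)$ and a copy of $E(\cdots)$ to the left-hand side and leaves only terms tested against $(\pi_\#^1-\pi_\#^0)\nexxt\gamma$ plus transport-cost contributions on the right.

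The final step handles those residual transport terms via \cref{lemma:fb:rearrangements} applied to $\mu=\this\mu$ and the three-plan $\tritrans=p_\#\nexxt\gamma$ with $p(x,y)=(x,y,x)$. For this choice, $\pi_\#^{0,1}\tritrans=\nexxt\gamma$, $\pi_\#^{1,2}\tritrans=\reverse_\#\nexxt\gamma$, $\pi_\#^{0,2}\tritrans=\diag_\#\pi_\#^0\nexxt\gamma$ (so $\curvature_F(\this\mu,\pi_\#^{0,2}\tritrans)=0$), $(\pi_\#^2-\pi_\#^1)\tritrans=(\pi_\#^0-\pi_\#^1)\nexxt\gamma$, and the triangle-rearranged product collapses to $\int\iprod{y-x}{z-y}\d\abs{\tritrans}=-2\abs{\nexxt\gamma}(c_2)$; crucially, by \cref{eq:fb:r-check-eq} the associated remainder equals $\nexxt{\check\remainder}$. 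Substituting the identity supplied by the lemma eliminates $\dualprod{\this{\breve v}+\nexxt w}{(\pi_\#^1-\pi_\#^0)\nexxt\gamma}$ in favour of $\curvature_F(\this\mu,\nexxt\gamma)+\curvature_F(\nexxt\mu,\reverse_\#\nexxt\gamma)$, $-2\inv\theta\abs{\nexxt\gamma}(c_2)$, $-\tau\ellRemainder\abs{\nexxt\gamma}(c_2)$, and $\nexxt{\check\remainder}$. Collecting the $(1-\tau L)E$, $(2\inv\theta-\tau[\ell+\ellRemainder])c_2$, and curvature components, together with the $B_F(\this{\breve\mu},\this\mu)$ contributions coming from the smoothness step, then reproduces $\fbmetricM{k}(\this\mu,\nexxt\mu;\nexxt\gamma)$ as defined in \cref{eq:fb:metric-x-y}. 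The main obstacle is purely algebraic bookkeeping: keeping track of how the $\tau V_{\ell c_2,LE}$, the Fenchel--Young $E(\cdot,\cdot)$, the Bregman $B_F(\this{\breve\mu},\this\mu)$, and the transport terms from \cref{lemma:fb:rearrangements} combine exactly into the single object $\fbmetricM{k}$ without a leftover residue, which they do precisely because the coefficients inside $\fbmetricM{k}$ have been calibrated to match this decomposition.
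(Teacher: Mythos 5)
Your proposal is correct and follows essentially the same route as the paper: the nonnegativity of $\marginalEnergy{k}^*$ from $\marginalEnergy{k}(0)=0$, the descent estimate from $(L,\ell)$-smoothness plus the subgradient inequality for $G$, the Fenchel--Young equality for the shifted marginal energy (which is exactly what \cref{thm:unbalanced:twocost-three-point:reverse} packages), and \cref{lemma:fb:rearrangements} applied to the reversal three-plan $p_\#\nexxt\gamma$ with $p(x,y)=(x,y,x)$ together with \eqref{eq:fb:r-check-eq}. The only differences are cosmetic --- you invoke Fenchel--Young directly instead of citing the reverse three-point theorem and order the additions slightly differently --- and your identification of how the coefficients recombine into $\fbmetricM{k}$ is accurate.
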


\begin{proof}
    \begin{equation}
        \label{eq:fb:rearrangements0}
        \begin{split}
        \int_{\Omega^2}&\frac{1}{\theta} \iprod{y-x}{x-y} \d\abs{\nexxt\gamma}(x, y)
        =
        \int_{\Omega^2}
            - \tau \iprod{\grad \this v(x)}{x-y}
        \d\nexxt\gamma(x,y)
        \\
        &
        =
        \dualprod{\nexxt\omega}{(\pi_\#^0-\pi_\#^1)\nexxt\gamma}
        - \dualprod{\nexxt{\tilde\epsilon}}{\this\mu-\nexxt\mu}
        - \tau\ellCurvature\abs{\nexxt\gamma}(c_2)
        - \nexxt{\check\remainder}
        \\
        &
        =
        \dualprod{\nexxt\omega}{\nexxt\mu-\this\mu-(\pi_\#^1-\pi_\#^0)\nexxt\gamma}
        - \tau\dualprod{\this{\breve v}+\nexxt{w}}{\this\mu - \nexxt\mu}
        - \tau\ellCurvature\abs{\nexxt\gamma}(c_2)
        - \nexxt{\check\remainder}.
        \end{split}
    \end{equation}
    Combining \cref{thm:unbalanced:twocost-three-point:reverse,eq:fb:rearrangements0}, we then deduce
    \begin{equation}
        \label{eq:fb:monotonicity:0}
        \begin{split}
        0
        &
        \ge
        V_{2\inv\theta c_2, E}(\this\mu, \nexxt\mu; \nexxt\gamma)
        + \marginalEnergy{k}^*(\nexxt\omega)
        \\
        \MoveEqLeft[-1]
        -\frac{1}{\theta}\int_{\Omega^{2}} |x-y|^2\d\abs{\nexxt\gamma}(x,y)
        -\dualprod{\nexxt\omega}{\nexxt\mu - \this\mu-(\pi_\#^1-\pi_\#^0)\nexxt\gamma}
        \\
        &
        =
        V_{[2\inv\theta - \tau\ellCurvature]c_2, E}(\this\mu, \nexxt\mu; \nexxt\gamma)
        + \marginalEnergy{k}^*(\nexxt\omega)
        - \tau\dualprod{\this{\breve v} + \nexxt w}{\this\mu - \nexxt\mu}
        - \nexxt{\check\remainder}.
        \end{split}
    \end{equation}

    Using the definition of the Bregman divergence $B_F$ of $F$, and the convexity/subdifferentiability of $G$ (\cref{ass:fb:all}\,\cref{item:fb:all:g}), we get
    \[
        [F+G](\this\mu) - [F+G](\nexxt\mu)
        \ge
        \dualprod{\this{\breve v} + \nexxt w}{\this\mu-\nexxt\mu}
        + B_F(\this{\breve \mu},\this\mu)
        - B_F(\this{\breve \mu},\nexxt\mu).
    \]
    By \cref{ass:fb:all}\,\cref{item:fb:all:f} and \cref{ass:fb:subdiff}\,\cref{item:fb:subdiff:curvature}, we also have
    \begin{gather}
        \nonumber
        B_F(\this{\breve \mu},\nexxt\mu)
        \le
        V_{\ell c_2, LE}(\this\mu,\nexxt\mu; \nexxt\gamma)
        \quad\text{and}\quad
        B_F(\this{\breve \mu},\this\mu)
        \ge -\ellF\abs{\nexxt\gamma}(c_2),
    \shortintertext{so we get}
        \label{eq:fb:monotonicity:fg-est}
        \begin{split}
        [F+G](\this\mu) - [F+G](\nexxt\mu)
        &
        \ge
        \dualprod{\this{\breve v} + \nexxt w}{\this\mu-\nexxt\mu}
        - V_{[\ell +\ellF]c_2, LE}(\this\mu,\nexxt\mu; \nexxt\gamma).
        \end{split}
    \end{gather}
    We finish by adding this inequality multiplied by $\tau$ to \eqref{eq:fb:monotonicity:0}, and using the definition of $\fbmetricM{k}$ from \eqref{eq:fb:metric-x-y}.
\end{proof}

\begin{theorem}[Subdifferential convergence]
    \label{thm:fb:subdiff-conv}
    Suppose \cref{ass:fb:all,ass:fb:subdiff} hold, and that $\{(\this\mu, \nexxt\gamma)\}_{k \in \N}$ are generated through the satisfaction of \cref{eq:fb:oc0,eq:fb:remainder}.
    Then $\inf_{w \in \subdiff G(\nexxt\mu)} \norm{F'(\nexxt\mu)+w}_{\Predual} \to 0$.
\end{theorem}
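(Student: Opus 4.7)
The plan is to combine the value quasi-monotonicity of \cref{lemma:fb:monotonicity} with the marginal step condition \eqref{eq:fb:oc0:marginal} to majorise $\norm{\nexxt v + \nexxt w}_\Predual$ by three controllable terms, and then force each of them to vanish via the sequential continuity hypotheses of \cref{ass:fb:subdiff}. Throughout $\nexxt v \defeq F'(\nexxt\mu)$, and the target infimum is majorised by $\norm{\nexxt v + \nexxt w}_\Predual$ because $\nexxt w \in \subdiff G(\nexxt\mu)$ by \eqref{eq:fb:oc0:vars}.

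\emph{Telescoping.} Summing \eqref{eq:fb:monotonicity} over $k = 0, \ldots, N-1$, and using $\marginalEnergy{k}^* \ge 0$ (from $\marginalEnergy{k}(0) = 0$), $\fbmetricM{k} \ge 0$ from \cref{lemma:fb:v01-lower-bound}, $\nexxt{\check\remainder} \le \check C \nexxt\epsilon$ from \eqref{eq:fb:oc0:remainder}, and $\inf [F+G] > -\infty$ from \cref{ass:fb:subdiff}\,\cref{item:fb:subdiff:f}, I would obtain
\[
    \sum_{k=0}^{N-1}\bigl[\marginalEnergy{k}^*(\nexxt{\distsub}) + \fbmetricM{k}(\this\mu, \nexxt\mu; \nexxt\gamma)\bigr]
    \le \tau\bigl([F+G](\mu^0) - \inf[F+G]\bigr) + \check C \sum_{k=0}^{N-1}\nexxt\epsilon.
\]
Dividing by $N$ and invoking \cref{ass:fb:all}\,\cref{item:fb:all:tolerances}, the Cesaro averages of the two non-negative quantities on the left vanish. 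Since $\fbmetricM{k}$ dominates a positive multiple of $V_{c_2,E}(\this\mu,\nexxt\mu;\nexxt\gamma)$ by \cref{lemma:fb:v01-lower-bound}, and $V_{c_2,E}$ majorises $E(\this\mu-\pi_\#^0\nexxt\gamma, \nexxt\mu-\pi_\#^1\nexxt\gamma)$ by construction, the marginal energies also vanish in Cesaro mean.

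\emph{Assembling the predual bound.} Rearranging \eqref{eq:fb:oc0:marginal} gives $\tau(\nexxt v + \nexxt w) = \nexxt{\tilde\epsilon} - \nexxt{\distsub} + \tau(\nexxt v - \this{\breve v})$, and the triangle inequality in $\Predual$ yields
\[
    \tau \norm{\nexxt v + \nexxt w}_\Predual
    \le \nexxt\epsilon + \norm{\nexxt{\distsub}}_\Predual + \tau\norm{\nexxt v - \this{\breve v}}_\Predual.
\]
The first term is bounded pointwise by \eqref{eq:fb:oc0:marginal}; \cref{ass:fb:subdiff}\,\cref{item:fb:subdiff:e-fenchel} sequentially controls the second by $\marginalEnergy{k}^*(\nexxt{\distsub}) \to 0$; and \cref{ass:fb:subdiff}\,\cref{item:fb:subdiff:e-f} sequentially controls the third by $E(\this\mu-\pi_\#^0\nexxt\gamma, \nexxt\mu-\pi_\#^1\nexxt\gamma) \to 0$, after rewriting $\nexxt v - \this{\breve v} = F'(\nexxt\mu) - F'(\this\mu + (\pi_\#^1-\pi_\#^0)\nexxt\gamma)$.

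\emph{Full-sequence convergence and main obstacle.} The three bounding sequences $\nexxt\epsilon$, $\norm{\nexxt{\distsub}}_\Predual$, $\norm{\nexxt v - \this{\breve v}}_\Predual$ all have vanishing Cesaro means by the preceding steps, hence so does $\norm{\nexxt v + \nexxt w}_\Predual$, and the target infimum has vanishing Cesaro mean. The delicate step is promoting this to pointwise vanishing along the full index $k$, which the statement requires: the three controlling quantities are not individually summable under \cref{ass:fb:all}\,\cref{item:fb:all:tolerances} alone. I would close this gap by combining the continuity implications of \cref{ass:fb:subdiff}\,\cref{item:fb:subdiff:e-f,item:fb:subdiff:e-fenchel} — which transfer decay along \emph{any} index sequence, not merely a distinguished one — with the non-negativity of the telescoped quantities: given any subsequence of iterates, a further subsequence realises the Cesaro decay pointwise, forcing the three bounds to zero there, and hence also $\norm{\nexxt v + \nexxt w}_\Predual$. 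Because this subsequence argument applies to every tail, the whole sequence converges to zero, yielding the claim.
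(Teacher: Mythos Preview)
Your telescoping and the predual decomposition $\tau(\nexxt v + \nexxt w) = \nexxt{\tilde\epsilon} - \nexxt{\distsub} + \tau(\nexxt v - \this{\breve v})$ match the paper exactly. The genuine gap is the final subsequence argument. Cesaro convergence to zero of a non-negative sequence does \emph{not} imply that every subsequence admits a further subsequence converging to zero: take $a_k = 1$ when $k$ is a perfect square and $a_k=0$ otherwise; then $\tfrac{1}{N}\sum_{k<N} a_k \to 0$, yet the subsequence $(a_{j^2})_{j \in \N}$ is identically $1$ and has no sub-subsequence tending to zero. Your step ``given any subsequence, a further subsequence realises the Cesaro decay pointwise'' therefore fails, and with only Cesaro control on $\marginalEnergy{k}^*(\nexxt{\distsub})$, $\fbmetricM{k}$, and $\nexxt\epsilon$ you cannot conclude full-sequence convergence of $\norm{\nexxt v + \nexxt w}_\Predual$.

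The paper does not go through Cesaro means at all. It argues that the partial sums $\sum_{k=0}^{N-1}\bigl[\marginalEnergy{k}^*(\nexxt{\distsub}) + \fbmetricM{k}(\this\mu, \nexxt\mu; \nexxt\gamma)\bigr]$ are bounded by a constant independent of $N$. Since both summands are non-negative (by \cref{lemma:fb:monotonicity} and \cref{lemma:fb:v01-lower-bound}), the series converges, and hence each term tends to zero along the \emph{full} index. The continuity hypotheses \cref{ass:fb:subdiff}\,\cref{item:fb:subdiff:e-f,item:fb:subdiff:e-fenchel} then transfer this directly to $\nexxt{\distsub}$ and $\nexxt v - \this{\breve v}$, with no subsequence extraction. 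Note that obtaining the uniform bound on the partial sums (and also $\nexxt\epsilon \to 0$ for the $\nexxt{\tilde\epsilon}$ term) actually requires $\sum_k \nexxt\epsilon < \infty$, which is stronger than the Cesaro condition literally stated in \cref{ass:fb:all}\,\cref{item:fb:all:tolerances}; the paper uses this implicitly.
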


\begin{proof}Let $N \in \N$. We apply \cref{lemma:fb:monotonicity} for all $k=0,\ldots,N-1$.
    Summing \eqref{eq:fb:monotonicity} over $k=0,\ldots,N-1$ and using $\nexxt{\check\remainder} \le \check C \nexxt\epsilon$ from \eqref{eq:fb:remainder} , we obtain
    \[
        \tau [F+G](\mu^N)
        + \sum_{k=0}^{N-1}\left(
            \marginalEnergy{k}^*(\nexxt\omega)
            + \fbmetricM{k}(\this\mu, \nexxt\mu; \nexxt\gamma)
        \right)
        \le
        \tau [F+G](\mu^0)
        + \check C \sum_{k=0}^{N-1} \nexxt\epsilon.
    \]
    By \cref{ass:fb:subdiff}\,\cref{item:fb:subdiff:f}, $[F+G](\mu^N) \ge \inf[F+G] > -\infty$.
    Minding \cref{ass:fb:all}\,\cref{item:fb:all:tolerances}, it follows for some constant $C$, independent of $N$, that
    \[
        \sum_{k=0}^{N-1}\left(
            \marginalEnergy{k}^*(\nexxt\omega)
            + \fbmetricM{k}(\this\mu, \nexxt\mu; \nexxt\gamma)
        \right)
        \le C.
    \]
    By \cref{lemma:fb:v01-lower-bound}, we have $\fbmetricM{k}(\this\mu, \nexxt\mu; \nexxt\gamma)\ge 0$ and $\marginalEnergy{k}^*(\nexxt\omega) \ge 0$.
    Thus, letting $N \to \infty$ above, we see that both converge to zero.
    By \cref{lemma:fb:v01-lower-bound}, this implies $V_{c_2, E}(\this\mu, \nexxt\mu; \nexxt\gamma) \to 0$, hence $E(\this\mu-\pi_\#^0\nexxt\gamma, \nexxt\mu-\pi_\#^1\nexxt\gamma) \to 0$.
    \Cref{ass:fb:subdiff}\,\cref{item:fb:subdiff:e-f} then implies $F'(\nexxt\mu)-F'(\this{\breve\mu}) \to 0$.
    By \eqref{eq:fb:oc0:marginal} we now have $\nexxt\omega + \tau[F'(\nexxt\mu) + \nexxt w] = \tau[F'(\nexxt\mu)-F'(\this{\breve\mu})] + \nexxt{\tilde\epsilon} \to 0$.
    As \cref{ass:fb:subdiff}\,\cref{item:fb:subdiff:e-fenchel} establishes $\nexxt\omega \to 0$, it follows that $F'(\nexxt\mu) + \nexxt w \to 0$. This implies the claim.
\end{proof}

The following corollary can be useful for verifying various assumptions in an inductive manner.

\begin{corollary}
    \label{cor:fb:bounded}
    Suppose \cref{ass:fb:all,ass:fb:subdiff} hold.
    Pick $N \in \N$, and for initial $\mu^0 \in \Masses$, generate $\{(\nexxt\mu, \nexxt\gamma)\}_{k=0}^{N-1}$ through the satisfaction of \cref{eq:fb:oc0,eq:fb:remainder}.
    If $\inf F \ge i_F > -\infty$ and $G \ge \phi(\norm{\mu})$ for a coercive $\phi: [0, \infty) \to [0, \infty)$, then $\sup_{k=0,\ldots,N-1} \norm{\nexxt\mu} \le m_\mu$ for a constant $m_\mu$ independent of $N$.
\end{corollary}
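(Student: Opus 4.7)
The plan is to chain the value quasi-monotonicity of \cref{lemma:fb:monotonicity} into a uniform upper bound on $[F+G](\nexxt\mu)$, and then invert the coercive lower bound on $G$.

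First, I would apply \cref{lemma:fb:monotonicity} at each step $k=0,\ldots,N-1$, and drop the two nonnegative terms on its left-hand side, namely $\marginalEnergy{k}^*(\nexxt\omega) \ge 0$ (established inside the same lemma) and $\fbmetricM{k}(\this\mu, \nexxt\mu; \nexxt\gamma) \ge 0$ (from \cref{lemma:fb:v01-lower-bound}, whose hypotheses hold by \cref{ass:fb:all}\,\cref{item:fb:all:e} and \cref{ass:fb:subdiff}\,\cref{item:fb:subdiff:tau}). Combining with the algorithmic estimate $\nexxt{\check\remainder} \le \check C \nexxt\epsilon$ from \eqref{eq:fb:oc0:remainder}, this produces the scalar recursion
\[
    \tau [F+G](\nexxt\mu) \le \tau [F+G](\this\mu) + \check C \nexxt\epsilon.
\]

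Second, I would iterate this recursion from $j=0$ up to $j=k \le N-1$ to telescope the value bounds, yielding
\[
    [F+G](\nexxt\mu) \le [F+G](\mu^0) + \frac{\check C}{\tau} \sum_{j=0}^{k-1} \nexxt[j+1]\epsilon
    \le [F+G](\mu^0) + \frac{\check C}{\tau} \sum_{j=0}^{N-1} \nexxt[j]\epsilon.
\]
Combining the assumed pointwise lower bounds $F \ge i_F$ and $G(\mu) \ge \phi(\norm{\mu})$ then gives
\[
    \phi(\norm{\nexxt\mu}) \le [F+G](\mu^0) - i_F + \frac{\check C}{\tau} \sum_{j=0}^{N-1} \nexxt[j]\epsilon
    =: C_\mu,
\]
and the coercivity of $\phi$ transforms the sublevel set $\{t \ge 0 \mid \phi(t) \le C_\mu\}$ into a bounded set, whose supremum serves as $m_\mu$.

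The key subtlety---and the main obstacle I would expect to handle carefully---lies in justifying that $C_\mu$ can be taken independently of $N$. The Cesàro-type condition of \cref{ass:fb:all}\,\cref{item:fb:all:tolerances} does not by itself make $\sum_j \nexxt[j]\epsilon$ finite, so one either invokes the (typical) summability of the tolerance sequence used in practice, or argues that the quantity $C_\mu$ depends only on the a priori chosen tolerance sequence $\{\nexxt\epsilon\}_{k \in \N}$ as a fixed datum of the algorithm, rather than on the truncation index $N$. Under the first, stronger reading the bound $m_\mu$ is genuinely uniform; under the second, it is uniform in the sense that it does not grow with any additional algorithmic parameter beyond what is already fixed.
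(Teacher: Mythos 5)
Your argument is essentially the paper's own proof: the paper likewise reduces the claim to the telescoped estimate $\tau[F+G](\mu^N) \le \tau[F+G](\mu^0) + \check C\sum_{k=0}^{N-1}\nexxt\epsilon$ (obtained by chaining \cref{lemma:fb:monotonicity} exactly as in the proof of \cref{thm:fb:subdiff-conv}) and then applies the lower bounds on $F$ and $G$ together with the coercivity of $\phi$. The subtlety you flag about the $N$-independence of $\sum_{k=0}^{N-1}\nexxt\epsilon$ is genuine but equally present in the paper's one-line proof, which cites only the Ces\`aro condition of \cref{ass:fb:all}\,\cref{item:fb:all:tolerances}; your "summable tolerances" reading is the one under which the constant $m_\mu$ is truly uniform, and it is the one realised by the tolerance sequences the paper actually uses.
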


\begin{proof}The proof of \cref{thm:fb:subdiff-conv} establishes $\tau [F+G](\mu^N) \le \tau [F+G](\mu^0)  + \check C \sum_{k=0}^{N-1} \nexxt\epsilon$.
    The claim now follows after using \cref{ass:fb:all}\,\cref{item:fb:all:tolerances} and the bounds on $F$ and $G$.
\end{proof}

\begin{remark}[Line search and spike-specific step lengths]
    With an appropriate redefinition of $\fbmetricM{k}$, the proofs of \cref{lemma:fb:monotonicity,thm:fb:subdiff-conv,cor:fb:bounded} go through if $\theta=\theta_{x,k}$ depends on both the source point $x$ and the iteration $k$.
    Thus, it will be possible to spike-specific transport line search, in particular, to satisfy the remainder condition \eqref{eq:fb:remainder}.
    This will no longer be the case in \cref{sec:sub}.
\end{remark}

\section{Sublinear convergence of function values}
\label{sec:sub}

We continue from \cref{sec:fb} to show $O(1/N)$ convergence rates under additional technical requirements.
In particular, this section only applies to energies $E$ that satisfy the three-point inequality \cref{eq:unbalanced:energy}, hence the conclusion of  \cref{thm:unbalanced:twocost-three-point}.
Thus, this section does not apply to the Radon-norm energy $E_\Meas$.
Under suitable second-order growth assumptions, the work here could be extended to linear convergence.

We will work with three-plans $\nexxt\tritrans \in \ThreePlansSpace$ to transport between the triples $(\this\mu, \nexxt\mu, \opt\mu)$ for a reference point $\opt\mu$, usually a minimiser of $F+G$.
The idea is that the marginal $\pi_\#^0\nexxt\tritrans$ models the mass transported from $\this\mu$,
the marginal $\pi_\#^1\nexxt\tritrans$ models the mass transported to $\nexxt\mu$, and the marginal $\pi_\#^2\nexxt\tritrans$ models the connection to a reference point $\opt\mu$, typically a solution of the problem \eqref{eq:fb:problem}; see \cref{fig:weaving}.
The marginal $\pi_\#^{0,1}\nexxt\tritrans$ will include $\nexxt\gamma$, but also will contain additional terms to allow “weaving” $\this\tritrans$ into $\nexxt\tritrans$ in a way that works with the convergence proofs. This is the topic of \cref{sec:sub:weaving}, before the convergence proofs of \cref{sec:sub:convergence}.

\begin{figure}[t]
    \centering
    \includegraphics{illustr/weaving.pdf}
    \caption{%
        Illustration of the “weaving” of the three three-plan $\this\tritrans$ into $\nexxt\tritrans$.
        The green areas indicate the iterates $\this\mu$, the orange area the (unknown) solution $\hat\mu$, and the arrows the three-plans.
        The purple areas serve to illustrate the triangle of arrows formed by each $\this\tritrans$.
        The marginals $\this\gamma=\pi_\#^{0,1}\this\tritrans$.
    }
    \label{fig:weaving}
    \tikzexternaldisable
\end{figure}

\subsection{Distances, remainders, and weaving of three-plans}
\label{sec:sub:weaving}

We denote the set of three-plans compatible with a transport plan $\gamma \in \TwoPlansSpace$ by
\begin{equation}
    \label{eq:fb:ThreePlansCompatAlg}
    \ThreePlansCompatAlg(\gamma) \defeq \left\{
        \tritrans_0 + p_\#\tilde\gamma \in \ThreePlansSpace
        \middle|
        \begin{array}{l}
        \pi_\#^{0,1}\tritrans_0 = \gamma;\
        \tilde\gamma \in \TwoPlansSpace;\
        p(x,z)=(x,z,z)
        \text{ and }
        \\
        \sign\pi_\#^{0,1}\tritrans_0(x, y) =  \sign\tritrans_0(x,y,z)
        \text{ for } \abs{\tritrans_0}\text{-a.e. } (x,y,z)
        \end{array}
    \right\}.
\end{equation}
The restriction $\nexxt\tritrans \in \ThreePlansCompatAlg(\nexxt\gamma)$ forces the marginal $\pi_\#^{0,1}\nexxt\tritrans$ to equal $\nexxt\gamma$ plus a “return” of mass to $\opt\mu$.
It also ensures that $\sign\nexxt\gamma$ matches $\sign\nexxt\tritrans$.
The return-of-mass is required to satisfy the transition constraint  $\nexxt\tritrans \in \ThreePlansNext(\tritrans^k)$ for
\begin{equation}
    \label{eq:fb:ThreePlansNext}
    \ThreePlansNext(\tritrans) \defeq
    \left\{ \tilde\tritrans \in \ThreePlansSpace \,\middle|\,
        \begin{array}{l}
            \pi_\#^{0,2} \tilde\tritrans = \pi_\#^{1,2}\tritrans + \diag_\#\nu \text{ for some } \nu \in \Masses
            \\
            \pi_\#^{0,2} \abs{\tilde\tritrans} \le \pi_\#^{1,2}\abs{\tritrans} + \diag_\#\bar\nu \text{ for some } \bar\nu \in \Masses
        \end{array}
    \right\}.
\end{equation}
This enforces the agreement of the marginal $\pi_\#^{0,2}\nexxt\tritrans$ with the marginal $\pi_\#^{1,2}\this\tritrans$, up to a diagonal term, irrelevant in the definition of $V_{c_2,E}$.
Both marginals model transport from $\this\mu$ to $\opt\mu$.

The next lemma shows that these restrictions can be satisfied.

\begin{lemma}\label{lemma:fb:triplan-existence}
    There exists a $\nexxt\tritrans \in \ThreePlansCompatAlg(\nexxt\gamma) \isect \ThreePlansNext(\this\tritrans)$.
    Moreover, $\inf_{\mu \in \Masses} \norm{\pi_\#^{1,2}\nexxt\tritrans-\diag\mu} \le \norm{\nexxt\gamma}$.
\end{lemma}

\begin{proof}We decompose $\nexxt\gamma = \nexxt\gamma_+ - \nexxt\gamma_-$ and $\this\tritrans=\this\tritrans_+ - \this\tritrans_-$, where the component measures are non-negative.
    We will construct non-negative measures $\nexxt\tritrans_\pm \in \ThreePlansCompatAlg(\nexxt\gamma_\pm) \isect \ThreePlansNext(\this\tritrans_\pm)$, and then set $\nexxt\tritrans= \nexxt\tritrans_+ - \nexxt\tritrans_-$.
    Because the supports of $\nexxt\gamma_\pm$ are disjoint, as are those of $\this\tritrans_\pm$, it follows that the supports of $\nexxt\tritrans_\pm$ are disjoint.
    It follows that also $\nexxt\tritrans \in \ThreePlansCompatAlg(\nexxt\gamma) \isect \ThreePlansNext(\this\tritrans)$.
    For the rest of the proof, we may therefore assume, w.log, that $\nexxt\gamma \ge 0$ and $\this\tritrans \ge 0$.

    Abbreviate $\bar\gamma \defeq \pi_\#^{1,2}\this\tritrans$.
    By the Lebesgue decomposition and Radon–Nikodym theorems, we can write $\pi_\#^0\nexxt\gamma = f\pi_\#^0\bar\gamma + \nu_s$, where $\nu_s \perp \pi_\#^0\bar\gamma$ and $f \ge 0$ is Borel measurable.
    Let $\tilde\gamma \defeq \min\{1, \bar f\}\bar\gamma + \diag_\#\nu$ for $\bar f(x,y) \defeq f(x)$ and $\nu \defeq \pi_\#^0\nexxt\gamma - \min\{1,f\}\pi_\#^0\bar\gamma$.
    Then $\pi_\#^0\tilde\gamma=\pi_\#^0\nexxt\gamma$, so by \cite[Lemma 5.3.2]{ambrosio2008gradientflows}, there exists $0 \le \tritrans_0 \in \ThreePlansSpace$ with $\pi_\#^{0,2}\tritrans_0=\tilde\gamma$ and $\pi_\#^{0,1}\tritrans_0=\nexxt\gamma$.
    Let then $\tritrans \defeq \tritrans_0 + p_\#\max\{0, 1-\bar f\}\bar\gamma$ for $p(x,z)=(x,z,z)$.
    This readily gives the claimed norm bound.
    Moreover, by positivity, this construction satisfies $\tritrans \in \ThreePlansCompatAlg(\nexxt\gamma)$.
    Finally, we have
    $
        \pi_\#^{0,2}\tritrans=\bar\gamma + \diag_\#\nu,
    $
    which, by non-negativity, satisfies the definition of $\ThreePlansNext(\this\tritrans)$ in \eqref{eq:fb:ThreePlansNext}.
\end{proof}

\begin{remark}[Admissible transport plans]
    Define the admissible set of transport plans (see \cref{sec:transport-new:basic})
    \[
        \TwoPlans^{k+1}(\mu,\nu) \defeq \{
            \gamma \in \TwoPlansSpace
            \mid
            \pi_\#^0 \gamma \ll \abs{\mu} + \abs{\opt\mu},\,
            \pi_\#^1 \gamma \ll \abs{\nu} + \abs{\opt\mu},\,
            \norm{\gamma} \le \norm{\nexxt\gamma} + \norm{\pi_\#^{1,2}\this\tritrans}
        \}.
    \]
    If, as in \cref{rem:fb:admissible}, we enforce $\pi_\#^0\nexxt\gamma \ll \abs{\nexxt\mu}$, it is possible to see from the proof of \cref{lemma:fb:triplan-existence} that on iteration $k$, we have $\pi_\#^{0,1}\nexxt\tritrans \in \TwoPlans^{k+1}(\this\mu,\nexxt\mu)$, $\pi_\#^{0,2}\nexxt\tritrans \in \TwoPlans^{k+1}(\this\mu,\opt\mu)$, and $\pi_\#^{1,2}\nexxt\tritrans \in \TwoPlans^{k+1}(\nexxt\mu,\opt\mu)$ provided that the previous iteration already satisfies $\pi_\#^{1,2}\this\tritrans \in \TwoPlans^k(\this\mu,\opt\mu)$.
\end{remark}

The following will be useful to transform integrals with respect to $\tritrans$:

\begin{lemma}\label{lemma:sub:integrals}
    Let $\tritrans \in \ThreePlansCompatAlg(\gamma)$ and $f: \Omega^3 \to \R$ be $\abs{\tritrans}$-integrable with $f(x, y, d)=0$ when $d=0$. Then
    \[
        \int_{\Omega^3} f(x,y,z-y) \d\tritrans(x,y,z) = \int_{\Omega^3} f(x,y,z-y) \d\tritrans_0(x,y,z),
    \]
    and likewise for the total variation measures.
    Moreover,
    \[
        \int_{\Omega^3} f(x,y,z-y) \gamma(x,y) \d\abs{\tritrans}(x,y,z) = \int_{\Omega^3} f(x,y,z-y) \d\tritrans(x,y,z).
    \]
    If $f(x, y, d)=g(x, y)$, then, moreover,
    \[
        \int_{\Omega^3} g(x,y) \d\abs{\tritrans_0}(x,y,z) = \int_{\Omega^2} g(x,y) \d\abs{\gamma}(x,y).
    \]
\end{lemma}

\begin{proof}Let $p$ and $\tilde\gamma$ be as in \eqref{eq:fb:ThreePlansCompatAlg}.
    The first claim holds because $y=z$, hence $f(x,y,z-y)=0$, for $p_\#\tilde\gamma$-a.e. $(x,y,z)$.
    For the second claim, we observe using the first claim twice, and \eqref{eq:fb:ThreePlansCompatAlg}. that
    \[
        \begin{split}
        \int_{\Omega^3} f(x, y, z-y) \sign\gamma(x,y)\d\abs{\tritrans}(x,y,z)
        &
        =
        \int_{\Omega^3} f(x, y, z-y) \sign\pi_\#^{0,1}\tritrans_0(x,y)\d\abs{\tritrans_0}(x,y,z)
        \!\!\!\!
        \\
        \MoveEqLeft[6]
        =
        \int_{\Omega^3} f(x, y, z-y) \d\tritrans_0(x,y,z)
        =
        \int_{\Omega^3} f(x, y, z-y) \d\tritrans(x,y,z).
        \end{split}
    \]
    For the final claim, we use
    \begin{multline*}
        \int_{\Omega^2} g(x,y) \d\abs{\tritrans_0}(x,y,z)
        =\int_{\Omega^2} g(x,y) \sign\tritrans_0(x,y,z)\d\tritrans_0(x,y,z)
        \\
        =\int_{\Omega^2} g(x,y) \sign\pi_\#^{0,1}\tritrans_0(x,z)\d\pi_\#^{0,1}\tritrans_0(x,y)
        =\int_{\Omega^2} g(x,y) \d\abs{\pi_\#^{0,1}\tritrans_0}(x,y)
        =\int_{\Omega^2} g(x,y) \d\abs{\nexxt\gamma}(x,y).
        \qedhere
    \end{multline*}
\end{proof}

We need to modify the remainder condition \eqref{eq:fb:remainder} on $\nexxt{\check\remainder}$.

\begin{rawalg}
    Using the variables defined in \cref{eq:fb:oc0}, we introduce the generalised remainder
    \begin{equation}
        \label{eq:fb:r-def}
        \begin{split}
        \nexxt{\remainder}(\mu, \tritrans)
        &
        \defeq
        - \dualprod{\nexxt{\tilde\epsilon}}{\mu- \nexxt\mu}
        - \tau\ellCurvature\pi_\#^{0,1}\abs{\tritrans}(c_2)
        \\
        \MoveEqLeft[-1]
        +\int_{\Omega^3}\tau\iprod{\grad \this v(x)}{z-y}
        +\nexxt\omega(z)-\nexxt\omega(y)
        \d\tritrans(x, y, z).
        \end{split}
    \end{equation}
    For some $C_{\opt\mu} > 0$ and a comparison point $\opt\mu \in \Masses$ of interest (typically a minimiser), for a $\nexxt\tritrans \in \ThreePlansCompatAlg(\nexxt\gamma) \isect \ThreePlansNext(\this\tritrans)$, we then consider the \emph{remainder conditions}
    \begin{align}
        \label{eq:sub:remainder}
        &
        C_{\opt\mu} \nexxt\epsilon \ge \nexxt{\remainder}(\opt\mu, \nexxt\tritrans)
        &&
        \text{to replace \eqref{eq:fb:remainder} for ergodic convergence,}
        \\
        \label{eq:sub:remainder-tight}
        &
        C_{\opt\mu} \nexxt\epsilon \ge
        (k+1)\check\remainder^{k+1} + \nexxt{\remainder}(\opt\mu, \nexxt\tritrans)
        &&
        \text{to replace \eqref{eq:fb:remainder} for non-erg.~convergence}.
    \end{align}
\end{rawalg}

\begin{remark}
    \label{rem:fb:r-check-eq}
    Let
    $
        \nexxt\tritrans_{\reverse} \defeq q_\# \nexxt\gamma
        \in \ThreePlansCompatAlg(\nexxt\gamma)
    $
    for
    $
        q(x,y) \defeq (x,y,x).
    $
    Then $\nexxt{\check\remainder} = \nexxt\remainder(\this\mu, \nexxt\tritrans_{\reverse})$.
\end{remark}

On the other hand, we will not require the curvature lower bound \cref{ass:fb:subdiff}\,\cref{item:fb:subdiff:curvature} and the corresponding factor $\ell_F$, because we will for simplicity assume $F$ convex. This is contained in the following assumption that we require besides \cref{ass:fb:all}:

\begin{assumption}\label{ass:sub:energy}
    We have:
    \begin{enumerate}[label=(\roman*)]
        \item
        \label{item:sub:energy:f}
        \textbf{Convex data term:}
        $F$ is convex.

        \item
        \textbf{Step length bounds:}
        \label{item:sub:energy:tau}
        The step lengths parameters $\tau,\theta>0$ and the factors $L,\ell$, and $\ellCurvature$ satisfy $\tau L \le 1$ and $\theta \tau [\ell+\ellCurvature] \le 1$.

        \item
        \textbf{Sub-Pythagorean energy:}
        \label{item:sub:energy:e}
        $E$ satisfies \eqref{eq:unbalanced:energy}.
    \end{enumerate}
\end{assumption}
The final condition rules out the Radon norm marginal term $E_\Meas$.

Recalling the definition of $\bar V_{\inv\theta c_2, E}^{i,j}$ from \cref{thm:unbalanced:twocost-three-point}, we abbreviate the distances between $\mu^{k+i}$ and $\opt\mu$ for $i=0,1$ with
\begin{equation}
    \label{eq:sub:metric02-12}
    \fbmetricBar{k}^{i,2}(\mu, \nu; \tritrans)
    \defeq
    \bar V_{\inv\theta c_2, E}^{i,2}(\mu, \nu; \tritrans)
    =
    \inv\theta \pi_\#^{i,2}\abs{\tritrans}(c_2)
    + E(\mu - \pi_\#^i \tritrans, \nu - \pi_\#^2 \tritrans)
\end{equation}
For the distance between $\this\mu$ and $\nexxt\mu$, we
also introduce
\begin{equation}
    \label{eq:sub:metric01}
    \begin{split}
    \fbmetricL{k}(\mu, \nu; \tritrans)
    &
    \defeq
    \bar V_{\inv\theta c_2, E}^{0,1}(\mu, \nu; \tritrans)
    - \tau
            V_{\ell c_2, L E}(\mu, \nu; \pi_\#^{0,1}\tritrans)
    \\
    &
    =
    (\inv\theta -\tau[\ell+\ellCurvature])\pi_\#^{0,1}\abs{\tritrans}(c_2)
    + (1-\tau L) E(\mu - \pi_\#^0 \tritrans, \nu - \pi_\#^1 \tritrans).
    \end{split}
\end{equation}

The following lower bounds are immediate:

\begin{lemma}\label{lemma:sub:v-lower-bound}
    Suppose \cref{ass:fb:all}\,\cref{item:fb:all:e} and \ref{ass:sub:energy}\,\cref{item:sub:energy:tau} hold.
    Then, for any $k \in \N$ and $\nu \in \Masses$, we have
    \begin{enumerate}[label=(\roman*)]
        \item\label{item:sub:v-lower-bound:opt}
        $\fbmetricBar{k}^{0,2}(\this\mu, \nu; \nexxt\tritrans) \ge 0$.
        \item\label{item:sub:v-lower-bound:iter}
        $\fbmetricL{k}(\this\mu, \nexxt\mu; \nexxt\tritrans) \ge 0$.
        \item\label{item:sub:v-lower-bound:m}
        $\fbmetricM{k}(\this\mu, \nexxt\mu; \nexxt\gamma) \ge 0$.
    \end{enumerate}
\end{lemma}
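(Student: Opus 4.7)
The plan is to handle each bound separately by rewriting the distance in its expanded form (the second line of \eqref{eq:sub:metric02-12}, \eqref{eq:sub:metric01}, and \eqref{eq:fb:metric-x-y} respectively), dropping the non-negative $E$-contribution (guaranteed by \cref{ass:fb:all}\,\cref{item:fb:all:e}, and by $\tau L \le 1$ where applicable), and checking that the remaining $c_2$-weighted terms combine with non-negative coefficients thanks to the step-length conditions in \cref{ass:sub:energy}\,\cref{item:sub:energy:tau}.

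For \cref{item:sub:v-lower-bound:opt}, I would use $\inv\theta \ge \tau\ellCurvature$, a consequence of $\tau\theta\ellCurvature \le \tau\theta[\ell+\ellRemainder+\ellCurvature] \le 1$, to bound $\inv\theta\,\pi_\#^{0,2}\abs{\nexxt\tritrans}(c_2)$ from below by $\tau\ellCurvature\,\pi_\#^{0,2}\abs{\nexxt\tritrans}(c_2)$, and then invoke \eqref{eq:sub:ocX:curvature-z} on the resulting combination $\tau[\ellCurvature\pi_\#^{0,2}\abs{\nexxt\tritrans}(c_2) - \curvature_F(\this\mu, \pi_\#^{0,2}\nexxt\tritrans)] \ge -\tau\constCurvature$; the leading factor $\tau$ can be absorbed into $\constCurvature$. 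For \cref{item:sub:v-lower-bound:iter}, compatibility $\nexxt\tritrans \in \ThreePlansCompatAlg(\nexxt\gamma)$ forces $\pi_\#^{0,1}\nexxt\tritrans = \nexxt\gamma$, so $\curvature_F(\this\mu, \pi_\#^{0,1}\nexxt\tritrans) = \curvature_F(\this\mu, \nexxt\gamma)$ and $\abs{\nexxt\gamma}(c_2) \le \pi_\#^{0,1}\abs{\nexxt\tritrans}(c_2)$; together with \eqref{eq:sub:ocX:curvature} this leaves the net non-negative coefficient $\inv\theta - \tau[\ell+\ellRemainder+\ellCurvature] \ge 0$ in front of $\abs{\nexxt\gamma}(c_2)$. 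For \cref{item:sub:v-lower-bound:m}, substituting \eqref{eq:fb:oc0:curvature} into the curvature bracket of $\fbmetricM{k}$ produces $2\tau\ellCurvature\abs{\nexxt\gamma}(c_2)$, and the remaining coefficient $2\inv\theta - \tau[\ell+\ellRemainder+2\ellCurvature]$ splits as $(\inv\theta - \tau[\ell+\ellRemainder+\ellCurvature]) + (\inv\theta - \tau\ellCurvature) \ge 0$.

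There is no genuine obstacle; the argument is elementary once the expansions are written out. The only care required is in \cref{item:sub:v-lower-bound:iter}, where the estimate $\abs{\nexxt\gamma}(c_2) \le \pi_\#^{0,1}\abs{\nexxt\tritrans}(c_2)$ goes in the useful direction only if the coefficient $\inv\theta - \tau[\ell+\ellRemainder]$ is non-negative before that replacement is made, which is assured by $\tau\theta[\ell+\ellRemainder+\ellCurvature] \le 1$ together with $\ellCurvature \ge 0$.
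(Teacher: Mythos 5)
Your overall strategy---expand each distance, drop the non-negative $E$-contribution using \cref{ass:fb:all}\,\cref{item:fb:all:e} and $\tau L\le 1$, and absorb the curvature terms into the $c_2$-weighted terms via the step-length condition---is exactly what the paper intends; it offers no proof at all, declaring the bounds immediate. Parts \cref{item:sub:v-lower-bound:opt} and \cref{item:sub:v-lower-bound:m} are correct as you argue them. (In \cref{item:sub:v-lower-bound:opt} your chain in fact yields $-\tau\constCurvature$ rather than $-\constCurvature$; this is a harmless relabelling of the constant, and the same slack is present in the paper's statement itself.)

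The one step that does not survive scrutiny is the claim in part \cref{item:sub:v-lower-bound:iter} that compatibility forces $\pi_\#^{0,1}\nexxt\tritrans=\nexxt\gamma$. By the definition \eqref{eq:fb:ThreePlansCompatAlg}, a compatible three-plan has the form $\nexxt\tritrans=\tritrans_0+p_\#\tilde\gamma$ with $p(x,z)=(x,z,z)$ and only $\pi_\#^{0,1}\tritrans_0=\nexxt\gamma$; since $\pi^{0,1}\circ p$ is the identity on $\Omega^2$, one gets $\pi_\#^{0,1}\nexxt\tritrans=\nexxt\gamma+\tilde\gamma$, and $\tilde\gamma$ is genuinely nonzero for the three-plans produced by \cref{lemma:fb:triplan-existence} that enter \cref{thm:sub:convergence:function-ergodic}. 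Consequently $\fbmetricL{k}(\this\mu,\nexxt\mu;\nexxt\tritrans)$ contains the additional contribution $(\inv\theta-\tau[\ell+\ellRemainder])\abs{\tilde\gamma}(c_2)-\tau\curvature_F(\this\mu,\tilde\gamma)$, and \eqref{eq:sub:ocX:curvature}, which speaks only of $\nexxt\gamma$, does not control $\curvature_F(\this\mu,\tilde\gamma)$; nor does \eqref{eq:sub:ocX:curvature-z}, which bounds the $(0,2)$-projection only up to the additive constant $\constCurvature$. To close this you need the curvature bound to cover the full projection $\pi_\#^{0,1}\nexxt\tritrans$, for instance through the pointwise estimate $\abs{B_{\this v}(x,y)}\le\ellCurvature c_2(x,y)$ of \cref{rem:fb:ellv} holding also on $\supp\tilde\gamma$---which is evidently how the paper means these conditions to be realised, but is not what \eqref{eq:sub:ocX:curvature} literally states, so the assumption you invoke is strictly weaker than what your argument uses.
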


The weaving compatibility condition  $\nexxt\tritrans \in \ThreePlansNext(\this\tritrans)$ allows us to rebalance the three-plan before commencing with the next step, as follows:

\begin{assumption}[Rebalancing]
    \label{ass:sub:rebalancing}
    For all $k \in \N$, for given $\nexxt\mu \in \Masses$, $ \nexxt\gamma \in \TwoPlansSpace$, and $\nexxt\tritrans \in \ThreePlansCompatAlg(\nexxt\gamma)$,
    the choice of
    $\gamma^{k+2} \in \TwoPlansSpace$ and $\tritrans^{k+2} \in \ThreePlansCompatAlg(\gamma^{k+2}) \isect \ThreePlansNext(\nexxt\tritrans)$
    is such that
    \begin{equation}
        \label{eq:fb:rebalancing}
        \fbmetricBar{k}^{1,2}(\nexxt\mu, \opt\mu; \nexxt\tritrans)
        \ge
        \fbmetricBar{k+1}^{0,2}(\nexxt\mu, \opt\mu; \tritrans^{k+2})
        \quad\text{for all}\quad
        \opt\mu \in \Masses.
    \end{equation}
\end{assumption}

The next lemma proves \cref{ass:sub:rebalancing} for our typical choices of $E$.

\begin{lemma}\label{lemma:fb:rebalancing}
    Let $E=E_\Wave$ or $E=E_\Wave$ for a self-adjoint $\Wave \in \linear(\Masses; \Predual)$.
    Then \cref{ass:sub:rebalancing} holds for any choice of $\gamma^{k+2} \in \TwoPlansSpace$ and $
        \tritrans^{k+2} \in \ThreePlansCompatAlg(\gamma^{k+2})
        \isect \ThreePlansNext(\nexxt\tritrans).
    $
\end{lemma}

\begin{proof}Write $\star=\Wave$ or $\star=\Meas$.
    Inserting \eqref{eq:sub:metric02-12} into \eqref{eq:fb:rebalancing} and expanding, we need to show that
    \begin{equation}
        \label{eq:fb:rebalancing:0}
        \begin{split}
        0
        &
        \le
        \int_{\Omega^2} \frac{1}{\theta} c_2(x, z) \d(\pi_\#^{1,2}\abs{\nexxt\tritrans}-\pi_\#^{0,2}\abs{\tritrans^{k+2}})(x, z)
        \\
        \MoveEqLeft[-1]
        +
        \frac{1}{2}\norm{\opt\mu-\nexxt\mu-(\pi_\#^2-\pi_\#^1)\nexxt\tritrans}_\star^2
        -
        \frac{1}{2}\norm{\opt\mu-\nexxt\mu-(\pi_\#^2-\pi_\#^0)\tritrans^{k+2}}_\star^2.
        \end{split}
    \end{equation}
    By the definition of $\ThreePlansNext(\nexxt\tritrans) \ni \tritrans^{k+2}$ in \eqref{eq:fb:ThreePlansNext}, we have $\pi_\#^{0,2}\tritrans^{k+2}=\pi_\#^{1,2}\nexxt\tritrans+\diag\nu$ and $\pi_\#^{0,2}\abs{\tritrans^{k+2}} \le \pi_\#^{1,2}\abs{\nexxt\tritrans}+\diag\bar\nu$ for some $\nu,\bar\nu \in \Masses$, hence also $(\pi_\#^2-\pi_\#^0)\tritrans^{k+2}=(\pi_\#^2-\pi_\#^1)\nexxt\tritrans$.
    Since the diagonal measures do not contribute to any term of \eqref{eq:fb:rebalancing:0}, this shows that the first term is non-negative, and the rest zero.
\end{proof}

\subsection{Convergence of function values}
\label{sec:sub:convergence}

We proceed with a transport-aware version of a standard descent estimate.

\begin{lemma}[Descent estimate to a reference point]
    \label{lemma:fb:fejer}
    Suppose \cref{ass:fb:all}\,\cref{item:fb:all:e,item:fb:all:g,item:fb:all:f} and \ref{ass:sub:energy}\,\ref{item:sub:energy:e} hold, and that $k \in \N$ and $(\this\mu, \nexxt\gamma) \in \Masses \times \TwoPlansSpace$ are given.
    If \cref{eq:fb:oc0} holds, then for any $\mu \in \Masses$ and $\tritrans \in \ThreePlansCompatAlg(\nexxt\gamma)$,
    \begin{equation}
        \label{eq:fb:descent:alt}
        \tau [F+G](\mu) - \tau [F+G](\nexxt\mu)
        \ge
        \fbmetricBar{k}^{1,2}(\nexxt\mu, \mu; \tritrans)
        - \fbmetricBar{k}^{0,2}(\this\mu, \mu; \tritrans)
        + \fbmetricL{k}(\this\mu, \nexxt\mu; \tritrans)
        - \nexxt{\remainder}(\mu, \tritrans).
    \end{equation}
\end{lemma}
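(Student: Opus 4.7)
The plan is to prove this descent estimate by combining three ingredients: the three-point identity of \cref{thm:unbalanced:twocost-three-point} applied to the comparison triple $(\this\mu, \nexxt\mu, \mu)$ with the three-plan $\tritrans$, the algorithmic rearrangement of \cref{lemma:fb:rearrangements}, and standard convexity/smoothness estimates for $F$ and $G$.

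First, I would scale the transport cost by $\inv\theta$ and apply \cref{thm:unbalanced:twocost-three-point} with $(\mu_0, \mu_1, \mu_2) = (\this\mu, \nexxt\mu, \mu)$ and the three-plan $\tritrans$, obtaining a lower bound for $\bar V^{0,2}_{\inv\theta c_2, E}(\this\mu, \mu; \tritrans) - \bar V^{0,1}_{\inv\theta c_2, E}(\this\mu, \nexxt\mu; \tritrans)$ in terms of the Pythagorean integral $\inv\theta\int \iprod{y-x}{z-y} \d\abs{\tritrans}$, a subdifferential-related dual product, and $\bar V^{1,2}_{\inv\theta c_2, E}(\nexxt\mu, \mu; \tritrans)$.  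Note that \cref{ass:sub:energy}\,\cref{item:sub:energy:e} supplies precisely the generalised Bregman inequality \eqref{eq:unbalanced:energy} for $E$ that this theorem demands.

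Second, I would invoke \cref{lemma:fb:rearrangements} with this $\tritrans$ and comparison measure $\mu$ to rewrite the Pythagorean integral together with the subdifferential-$\nexxt\omega$ dual product as an expression involving the curvatures $\curvature_F(\this\mu, \pi_\#^{0,2}\tritrans)$, $\curvature_F(\this\mu, \pi_\#^{0,1}\tritrans)$, $\curvature_F(\nexxt\mu, \pi_\#^{1,2}\tritrans)$, the marginal dual product $-\tau\dualprod{\this{\breve v} + \nexxt w}{\mu - \nexxt\mu}$, the penalty $\tau\ellRemainder\pi_\#^{0,1}\abs{\tritrans}(c_2)$, and the remainder $\nexxt\remainder(\tritrans, \mu)$.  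Substituting this identity into the lower bound from the first step and using the definitions \eqref{eq:sub:metric02-12}--\eqref{eq:sub:metric01} of $\fbmetricBar{k}^{i,j}$ and $\fbmetricL{k}$ to absorb the curvatures, one obtains (up to terms I will cancel in the final step) an inequality of the form
\[
\fbmetricBar{k}^{0,2}(\this\mu, \mu; \tritrans) - \fbmetricBar{k}^{1,2}(\nexxt\mu, \mu; \tritrans) - \fbmetricL{k}(\this\mu, \nexxt\mu; \tritrans) + \nexxt\remainder(\tritrans, \mu) \ge -\tau\dualprod{\this{\breve v} + \nexxt w}{\mu - \nexxt\mu} - \tau V_{\ell c_2, LE}(\this\mu, \nexxt\mu; \pi_\#^{0,1}\tritrans).
\]

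Third, I would bound the dual product $\tau\dualprod{\this{\breve v} + \nexxt w}{\mu - \nexxt\mu}$ from above by combining the convexity of $F$ (which gives $F(\mu) \ge F(\this{\breve\mu}) + \dualprod{\this{\breve v}}{\mu - \this{\breve\mu}}$), the descent inequality $F(\nexxt\mu) \le F(\this{\breve\mu}) + \dualprod{\this{\breve v}}{\nexxt\mu - \this{\breve\mu}} + V_{\ell c_2, LE}(\this\mu, \nexxt\mu; \pi_\#^{0,1}\tritrans)$ (from the smoothness assumption \cref{ass:fb:all}\,\cref{item:fb:all:f} applied with $\gamma = \pi_\#^{0,1}\tritrans$), and the subgradient inequality $G(\mu) \ge G(\nexxt\mu) + \dualprod{\nexxt w}{\mu - \nexxt\mu}$. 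Subtracting and rearranging then delivers the claim.

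The main obstacle I expect is the careful reconciliation of the two different subdifferentials and smoothness basepoints that appear: the algorithmic $\nexxt\omega$ and $\this{\breve v} = F'(\this{\breve\mu})$ are tied to the two-plan $\nexxt\gamma$, whereas the three-point identity and the smoothness bound in $\fbmetricL{k}$ are expressed with respect to $\pi_\#^{0,1}\tritrans$.  The compatibility condition $\tritrans \in \ThreePlansCompatAlg(\nexxt\gamma)$, which restricts $\tritrans$ to the form $\tritrans_0 + p_\#\tilde\gamma$ with $p(x,z)=(x,z,z)$, is exactly what enables this reconciliation: on the diagonal correction $p_\#\tilde\gamma$ one has $y=z$, so the kinetic integrand $\iprod{y-x}{z-y}$ vanishes and the residual subdifferential/gradient discrepancies can be matched against $\nexxt{\tilde\epsilon}$ and the $\dualprod{\this v - \this{\breve v}}{(\pi_\#^2-\pi_\#^1)\tritrans}$ piece of $\nexxt\remainder(\tritrans, \mu)$.
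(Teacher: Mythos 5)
Your proposal follows essentially the same route as the paper's own proof: first \cref{thm:unbalanced:twocost-three-point} applied to $(\this\mu,\nexxt\mu,\mu)$ with the plan $\tritrans$, then the rearrangement identity of \cref{lemma:fb:rearrangements} to absorb the kinetic integral and the $\nexxt\omega$ dual product into curvatures and the remainder, and finally the combination of convexity of $F$, the $(L,\ell)$-smoothness bound, and the subgradient inequality for $G$ (the paper's \eqref{eq:fb:descent:fg-est}, with $B_F(\this{\breve\mu},\mu)\ge 0$ discarded). The argument is correct and matches the paper step for step.
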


\begin{proof}Using \cref{lemma:sub:integrals}, \cref{eq:fb:oc0:transport,eq:fb:oc0:marginal,eq:fb:r-def}, similarly to \eqref{eq:fb:rearrangements0}, we get
    \begin{equation}
        \label{eq:fb:rearrangements}
        \begin{split}
        \int_{\Omega^3}&\frac{1}{\theta} \iprod{y-x}{z-y} \d\abs{\tritrans}(x, y, z)
        =
        - \int_{\Omega^3}\iprod{\grad \this v(x)}{z-y} \nexxt\gamma(x, y) \d\abs{\tritrans}(x, y, z)
        \\
        &
        =
        -\tau \int_{\Omega^3} \iprod{\grad \this v(x)}{z-y} \d\tritrans(x, y, z)
        \\
        &
        =
        \dualprod{\nexxt\omega}{(\pi_\#^2-\pi_\#^1)\tritrans}
        - \dualprod{\nexxt{\tilde\epsilon}}{\mu- \nexxt\mu}
        - \tau\ellCurvature\pi_\#^{0,1}\abs{\tritrans}(c_2)
        - \nexxt\remainder(\mu, \tritrans)
        \\
        &
        =
        - \dualprod{\nexxt\omega}{\mu- \nexxt\mu-(\pi_\#^2-\pi_\#^1)\tritrans}
        - \tau\dualprod{\this{\breve v}+\nexxt{w}}{\mu - \nexxt\mu}
        - \tau\ellCurvature\pi_\#^{0,1}\abs{\tritrans}(c_2)
        - \nexxt\remainder(\mu, \tritrans).
        \end{split}
    \end{equation}
    \Cref{thm:unbalanced:twocost-three-point} establishes
    \begin{multline*}
        \bar V_{\inv\theta c_2, E}^{0,2}(\this\mu, \mu; \tritrans)
        - \bar V^{0,1}_{\inv\theta c_2, E}(\this\mu, \nexxt\mu; \tritrans)
        \\
        \ge
        \frac{1}{\theta}\int_{\Omega^3} \iprod{y-x}{z-y} \d\abs{\tritrans}(x, y, z)
        +\dualprod{\nexxt\omega}{\mu- \nexxt\mu-(\pi_\#^2-\pi_\#^1)\tritrans}
        + \bar V_{\inv\theta c_2, E}^{1,2}(\nexxt\mu, \mu; \tritrans).
    \end{multline*}
    Combining this with \cref{eq:fb:rearrangements}, we get
    \begin{equation}
        \label{eq:fb:descent:1:alt}
        \begin{split}
        0
        &
        \ge
        \bar V_{\inv\theta c_2, E}^{1,2}(\nexxt\mu, \mu; \tritrans)
        - \bar V_{\inv\theta c_2, E}^{0,2}(\this\mu, \mu; \tritrans)
        +\bar V^{0,1}_{[\inv\theta - \tau\ellCurvature]c_2, E}(\this\mu, \nexxt\mu; \tritrans)
        \\
        \MoveEqLeft[-1]
        - \tau\dualprod{\this{\breve v}+\nexxt{w}}{\mu - \nexxt\mu}
        - \nexxt\remainder(\mu, \tritrans).
        \end{split}
    \end{equation}
    Using the definitions \cref{eq:sub:metric01,eq:sub:metric02-12}, we obtain
    \begin{equation}
        \label{eq:fb:descent:2:alt}
        \begin{split}
        0
        &
        \ge
        \fbmetricBar{k}^{1,2}(\nexxt\mu, \mu; \tritrans)
        - \fbmetricBar{k}^{0,2}(\this\mu, \mu; \tritrans)
        + \fbmetricBar{k}^{0,1}(\this\mu, \nexxt\mu; \tritrans)
        - \nexxt\remainder(\mu, \tritrans).
        \\
        \MoveEqLeft[-1]
        - \tau\dualprod{\this{\breve v}+\nexxt{w}}{\mu - \nexxt\mu}
        + V_{\ell c_2, LE}(\this\mu,\nexxt\mu; \nexxt\gamma)
        \end{split}
    \end{equation}
    Analogously to \eqref{eq:fb:monotonicity:fg-est}, we prove
    \begin{equation}
        \label{eq:fb:descent:fg-est}
        [F+G](\mu) - [F+G](\nexxt\mu)
        \ge
        \dualprod{\this{\breve v} + \nexxt w}{\mu-\nexxt\mu}
        + B_F(\this{\breve \mu},\mu)
        - V_{\ell c_2, LE}(\this\mu,\nexxt\mu; \nexxt\gamma).
    \end{equation}
    Since  $B_F(\this{\breve \mu},\mu) \ge 0$ by the convexity\footnote{%
        This is the only place where we require the convexity.
        With some effort, the assumption could be made local; compare \cite{tuomov2024tracking}.
    } of $F$, the claim now follows by combining \cref{eq:fb:descent:fg-est} multiplied by $\tau$ with \cref{eq:fb:descent:2:alt}.
\end{proof}

We are now ready to state our main convergence theorems \cref{alg:fb:rough}, based on the satisfaction of \cref{eq:fb:oc0} and the remainder condition \eqref{eq:sub:remainder} or \eqref{eq:sub:remainder-tight}.

\begin{theorem}[Ergodic function value convergence]
    \label{thm:sub:convergence:function-ergodic}
    Suppose \cref{ass:fb:all,ass:sub:energy,ass:sub:rebalancing} hold, and let $\opt\mu \in \Masses$.
    For an initial $\mu^0 \in \Masses$, generate $\{(\nexxt\mu, \nexxt\gamma)\}_{k \in \N}$ through the satisfaction of \cref{eq:fb:oc0,eq:sub:remainder}.
    Pick $N \in \N$ and $\tritrans^1 \in \ThreePlansCompatAlg(\gamma^1)$.
    Then for $\tilde\mu^N \defeq \frac{1}{N}\sum_{k=0}^{N-1}\mu^{k+1}$, we have
    \begin{equation}
        \label{eq:fb:convergence:function-ergodic}
        [F+G](\tilde\mu^N)
        - [F+G](\opt\mu)
        \le
        \frac{1}{\tau N} \fbmetricBar{0}^{0,2}(\mu^0, \opt\mu;  \tritrans^1)
        + \frac{\constCurvature}{\tau N}
        + \frac{C_{\opt\mu}}{\tau N} \sum_{k=0}^{N-1} \nexxt\epsilon
    \end{equation}
    In particular, $[F+G](\tilde\mu^N) \to \inf [F+G]$ at the rate $O(1/N)$.
\end{theorem}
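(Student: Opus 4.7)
The plan is to apply the descent estimate of \cref{lemma:fb:fejer} to the comparison point $\opt\mu$ at each iteration, telescope through the three-plan rebalancing (\cref{ass:sub:rebalancing}), and then average via Jensen's inequality, using convexity of $F+G$ (\cref{ass:sub:energy}\,\cref{item:sub:energy:f} plus \cref{ass:fb:all}\,\cref{item:fb:all:g}).

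Concretely, I would first apply \cref{lemma:fb:fejer} with $\mu = \opt\mu$ and $\tritrans = \nexxt\tritrans \in \ThreePlansCompatAlg(\nexxt\gamma)\isect\ThreePlansNext(\this\tritrans)$, whose existence is guaranteed by \cref{lemma:fb:triplan-existence}. After discarding the nonnegative term $\fbmetricL{k}(\this\mu, \nexxt\mu; \nexxt\tritrans) \ge 0$ from \cref{lemma:sub:v-lower-bound}\,\cref{item:sub:v-lower-bound:iter} and using the ergodic remainder condition \eqref{eq:sub:ocX:remainder} to estimate $\nexxt\remainder(\nexxt\tritrans, \opt\mu) \le C_{\opt\mu}\nexxt\epsilon$, this gives
\begin{equation*}
    \tau[F+G](\mu^{k+1}) - \tau[F+G](\opt\mu)
    \le
    \fbmetricBar{k}^{0,2}(\this\mu, \opt\mu; \nexxt\tritrans)
    - \fbmetricBar{k}^{1,2}(\nexxt\mu, \opt\mu; \nexxt\tritrans)
    + C_{\opt\mu}\nexxt\epsilon.
\end{equation*}

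Next I would invoke \cref{ass:sub:rebalancing}, which transforms $\fbmetricBar{k}^{1,2}(\nexxt\mu, \opt\mu; \nexxt\tritrans)$ into $\fbmetricBar{k+1}^{0,2}(\nexxt\mu, \opt\mu; \tritrans^{k+2})$ so that consecutive right-hand sides form a telescoping difference. Summing over $k = 0, \ldots, N-1$ yields
\begin{equation*}
    \tau \sum_{k=0}^{N-1} \bigl([F+G](\mu^{k+1}) - [F+G](\opt\mu)\bigr)
    \le
    \fbmetricBar{0}^{0,2}(\mu^0, \opt\mu; \tritrans^1)
    - \fbmetricBar{N-1}^{1,2}(\mu^N, \opt\mu; \tritrans^N)
    + C_{\opt\mu}\sum_{k=0}^{N-1}\nexxt\epsilon.
\end{equation*}
To control the boundary term I apply \cref{ass:sub:rebalancing} once more to pick $\tritrans^{N+1}$ with $\fbmetricBar{N-1}^{1,2}(\mu^N, \opt\mu; \tritrans^N) \ge \fbmetricBar{N}^{0,2}(\mu^N, \opt\mu; \tritrans^{N+1})$, and then invoke \cref{lemma:sub:v-lower-bound}\,\cref{item:sub:v-lower-bound:opt} to bound the latter below by $-\constCurvature$. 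Dividing by $\tau N$ and using Jensen's inequality $[F+G](\tilde\mu^N) \le \frac{1}{N}\sum_{k=0}^{N-1}[F+G](\mu^{k+1})$ yields \eqref{eq:fb:convergence:function-ergodic}.

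The $O(1/N)$ rate for the first two terms is then immediate, while the tolerance term $\frac{C_{\opt\mu}}{\tau N}\sum_{k=0}^{N-1}\nexxt\epsilon$ tends to zero by \cref{ass:fb:all}\,\cref{item:fb:all:tolerances}. The main subtlety I expect is the bookkeeping around the three-plan $\nexxt\tritrans$: it must simultaneously satisfy the compatibility \eqref{eq:fb:ThreePlansCompatAlg} (so that \cref{lemma:fb:rearrangements} and hence \cref{lemma:fb:fejer} apply), and the weaving condition $\nexxt\tritrans \in \ThreePlansNext(\this\tritrans)$ (so that the rebalancing of \cref{ass:sub:rebalancing} can chain consecutive terms). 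The existence of such weaving-compatible three-plans is exactly what \cref{lemma:fb:triplan-existence} provides, so no new construction is needed; one just needs to apply these lemmas consistently in the inductive chain and confirm that the lower bound on the endpoint term survives the extra rebalancing step.
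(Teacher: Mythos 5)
Your proposal is correct and follows essentially the same route as the paper's proof: apply \cref{lemma:fb:fejer} at $\opt\mu$ with the weaving-compatible three-plan from \cref{lemma:fb:triplan-existence}, drop $\fbmetricL{k}\ge 0$ via \cref{lemma:sub:v-lower-bound}, telescope using \cref{ass:sub:rebalancing}, bound the endpoint by $-\constCurvature$, and finish with Jensen's inequality. The only cosmetic difference is that the paper folds the rebalancing into the per-step estimate \eqref{eq:fb:convergence:onestep} before summing, whereas you apply it once more after telescoping — these are equivalent.
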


\begin{proof}We have assumed to be given some $\tritrans^1 \in \ThreePlansCompatAlg(\gamma^1)$; for example, $\tritrans^1=\tritrans^1_{\reverse}$, as constructed in \cref{lemma:fb:fejer}.
    For all $k \ge 1$, we let $\nexxt\tritrans \in \ThreePlansCompatAlg(\nexxt\gamma) \isect \ThreePlansNext(\this\tritrans)$ be determined by \eqref{eq:sub:remainder} and the rebalancing \cref{ass:sub:rebalancing}.

    Pick any $k \in \{0,\ldots,N-1\}$.
    We apply \cref{lemma:fb:fejer} with $\mu=\opt\mu$ and $\tritrans = \nexxt\tritrans$ to obtain \cref{eq:fb:descent:alt}.
    Using \cref{ass:sub:rebalancing} and \cref{lemma:sub:v-lower-bound}\,\cref{item:sub:v-lower-bound:iter} there, we obtain
    \begin{equation}
        \label{eq:fb:convergence:onestep}
        \tau[F+G](\opt\mu) - \tau[F+G](\nexxt\mu)
        \ge
        \fbmetricBar{k+1}^{0,2}(\nexxt\mu, \opt\mu; \tritrans^{k+2})
        - \fbmetricBar{k}^{0,2}(\this\mu, \opt\mu; \nexxt\tritrans)
        - \nexxt{\remainder}(\opt\mu, \nexxt\tritrans).
    \end{equation}
    Using \eqref{eq:sub:remainder} and summing over $k=0,\ldots,N-1$ in \eqref{eq:fb:convergence:onestep}, we thus obtain
    \[
        \fbmetricBar{N}^{0,2}(\mu^N, \opt\mu; \tritrans^{N+1})
        + \sum_{k=0}^{N-1} \tau [F+G](\nexxt\mu)
        \le
        \fbmetricBar{0}^{0,2}(\mu^0, \opt\mu; \tritrans^1)
        + \tau N [F+G](\opt\mu)
        + \sum_{k=0}^{N-1} C_{\opt\mu} \nexxt\epsilon.
    \]
    An application of \cref{lemma:sub:v-lower-bound}\,\cref{item:sub:v-lower-bound:opt} and Jensen's inequality then establishes \eqref{eq:fb:convergence:function-ergodic}. \Cref{ass:fb:all}\,\cref{item:fb:all:tolerances} now establishes the convergence rate claim.
\end{proof}

\begin{corollary}[Function value convergence]
    \label{cor:sub:convergence:function}
    Suppose \cref{ass:fb:all,ass:sub:energy,ass:sub:rebalancing} hold, and let $\opt\mu \in \Masses$.
    For an initial $\mu^0 \in \Masses$, generate $\{(\nexxt\mu, \nexxt\gamma)\}_{k \in \N}$ through the satisfaction of \cref{eq:fb:oc0,eq:sub:remainder-tight}.
    Then for any $N \in \N$ and $\tritrans^1 \in \ThreePlansCompatAlg(\gamma^1)$, we have
    \[
        [F+G](\mu^N)
        - [F+G](\opt\mu)
        \le
        \frac{1}{N\tau} \fbmetricBar{0}^{0,2}(\mu^0, \opt\mu; \tritrans^1)
        + \frac{\constCurvature}{\tau N}
        + \frac{C_{\opt\mu}}{\tau N}\sum_{k=0}^{N-1}
            \nexxt\epsilon.
    \]
    In particular, $[F+G](\mu^N) \to \inf [F+G]$ at the rate $O(1/N)$.
\end{corollary}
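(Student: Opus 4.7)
The proof should parallel that of \cref{thm:sub:convergence:function-ergodic}, but exploit the stronger Value $O(1/N)$ hypotheses to upgrade the ergodic-average bound to a bound on the last iterate $\mu^N$. The crucial new ingredient is the value quasi-monotonicity of \cref{lemma:fb:monotonicity}, which is now available because the conditions \eqref{eq:fb:oc0:curvature} and \cref{ass:fb:subdiff}\,\cref{item:fb:subdiff:tau} (i.e.\ $\tau L<1$ and $\theta\tau[\ell+\ellRemainder+2\ellCurvature]<2$) guarantee via \cref{lemma:sub:v-lower-bound}\,\cref{item:sub:v-lower-bound:m} that $\fbmetricM{k}(\this\mu,\nexxt\mu;\nexxt\gamma)\ge 0$; \cref{lemma:fb:monotonicity} therefore reduces to the one-sided descent bound $\tau[F+G](\nexxt\mu)\le \tau[F+G](\this\mu)+\nexxt{\check\remainder}$.

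The plan is to combine this quasi-monotonicity with the Fejér-type descent derived exactly as in the ergodic case. Concretely, first apply \cref{lemma:fb:fejer} with $\mu=\opt\mu$ and $\tritrans=\nexxt\tritrans\in\ThreePlansCompatAlg(\nexxt\gamma)\isect\ThreePlansNext(\this\tritrans)$ (existence by \cref{lemma:fb:triplan-existence}), use \cref{ass:sub:rebalancing} and \cref{lemma:sub:v-lower-bound}\,\cref{item:sub:v-lower-bound:iter} to discard $\fbmetricL{k}$, and obtain the one-step estimate
\[
    \tau[F+G](\nexxt\mu)-\tau[F+G](\opt\mu)
    \le
    \fbmetricBar{k}^{0,2}(\this\mu,\opt\mu;\nexxt\tritrans)
    -\fbmetricBar{k+1}^{0,2}(\nexxt\mu,\opt\mu;\tritrans^{k+2})
    +\nexxt{\remainder}(\nexxt\tritrans,\opt\mu).
\]

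Next, I introduce the Nesterov-type weighted potential
\[
    \Phi_k \defeq k\tau\bigl([F+G](\this\mu)-[F+G](\opt\mu)\bigr)+\fbmetricBar{k}^{0,2}(\this\mu,\opt\mu;\tritrans^{k+1})+\constCurvature.
\]
Writing $(k+1)[F+G](\nexxt\mu)-k[F+G](\this\mu)=k([F+G](\nexxt\mu)-[F+G](\this\mu))+[F+G](\nexxt\mu)$, the quasi-monotonicity (multiplied by the weight $k$) controls the first term, while the Fejér descent controls the second, together with the telescoping difference $\fbmetricBar{k+1}^{0,2}-\fbmetricBar{k}^{0,2}$. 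The net result is
\[
    \Phi_{k+1}-\Phi_k \le k\,\check\remainder^{k+1}+\nexxt{\remainder}(\nexxt\tritrans,\opt\mu),
\]
and the tight remainder condition \eqref{eq:sub:ocX:remainder-tight} bounds the right-hand side by $C_{\opt\mu}\nexxt\epsilon$. Telescoping over $k=0,\ldots,N-1$ and using $\fbmetricBar{N}^{0,2}+\constCurvature\ge 0$ from \cref{lemma:sub:v-lower-bound}\,\cref{item:sub:v-lower-bound:opt} gives $N\tau([F+G](\mu^N)-[F+G](\opt\mu))\le \fbmetricBar{0}^{0,2}(\mu^0,\opt\mu;\tritrans^1)+\constCurvature+C_{\opt\mu}\sum_{k=0}^{N-1}\nexxt\epsilon$, which is the stated bound after division by $N\tau$. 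The $O(1/N)$ convergence then follows from \cref{ass:fb:all}\,\cref{item:fb:all:tolerances}.

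The main obstacle is arranging the weighting correctly: the combination of descent plus $k\times$(quasi-monotonicity) naturally produces the factor $k\,\check\remainder^{k+1}$, which must be reconciled with the factor $(k+1)\,\check\remainder^{k+1}$ appearing in \eqref{eq:sub:ocX:remainder-tight}. In the configurations in which the algorithm is actually run (and in particular under the schemes to be discussed in \cref{sec:sub:controls}), $\check\remainder^{k+1}$ has a fixed sign, so the estimate $k\,\check\remainder^{k+1}+\nexxt{\remainder}\le(k+1)\,\check\remainder^{k+1}+\nexxt{\remainder}\le C_{\opt\mu}\nexxt\epsilon$ holds; otherwise the leftover term is absorbed into the constant $C_{\opt\mu}$. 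Everything else is a routine verification mirroring the ergodic proof.
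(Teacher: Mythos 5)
Your argument is correct and is essentially the paper's own proof in a different bookkeeping: the weighted potential $\Phi_k$ telescopes to exactly the quantity the paper obtains by summing the one-step estimate \eqref{eq:fb:convergence:onestep} and repeatedly inserting the quasi-monotonicity \eqref{eq:fb:monotonicity}, with the same ingredients (\cref{lemma:fb:fejer}, \cref{lemma:fb:monotonicity}, \cref{lemma:sub:v-lower-bound}, and \eqref{eq:sub:ocX:remainder-tight}). The $k$ versus $(k{+}1)$ weight on $\check\remainder^{k+1}$ that you flag is present in the paper's own double sum as well, so your reconciliation via \eqref{eq:sub:ocX:remainder-tight} matches what the paper implicitly does.
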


\begin{proof}We follow the proof of \cref{thm:sub:convergence:function-ergodic} until \eqref{eq:fb:convergence:onestep}.
    Note that \cref{ass:sub:energy}\,\cref{item:sub:energy:f,item:sub:energy:tau} ensure \cref{ass:fb:subdiff}\,\cref{item:fb:subdiff:curvature,item:fb:subdiff:tau} because $\ellF=0$.
    By \cref{lemma:fb:monotonicity,lemma:fb:v01-lower-bound}, we then have
    \[
        \tau [F+G](\nexxt\mu) \le \tau [F+G](\this\mu) + \nexxt{\check\remainder}.
    \]
    We may repeatedly apply this inequality in \eqref{eq:fb:convergence:onestep} and sum over $k=0,\ldots,N-1$ to obtain
    \begin{gather}
        \label{eq:fb:convergence:function:0}
        \fbmetricBar{N}^{0,2}(\mu^N, \opt\mu; \tritrans^{N+1})
        + b_N
        + N\tau [F+G](\mu^N)
        \le
        \fbmetricBar{0}^{0,2}(\mu^0, \opt\mu; \tritrans^1)
        + N\tau [F+G](\opt\mu)
        + a_N
    \shortintertext{for}
        \nonumber
        a_N
        \defeq
        \sum_{k=0}^{N-1} \left(
            \sum_{j=k+1}^{N-1} \check\remainder^{j+1}
            + \nexxt{\remainder}(\opt\mu, \nexxt\tritrans)
        \right)
        =
        \sum_{k=0}^{N-1} \left(
            (k+1)\check\remainder^{k+1}
            + \nexxt{\remainder}(\opt\mu, \nexxt\tritrans)
        \right)
    \shortintertext{and}
        \nonumber
        b_N
        \defeq
        \sum_{k=0}^{N-1}%
            \sum_{j=k+1}^{N-1} \fbmetricM{j}(\mu^j, \mu^{j+1}; \gamma^{j+1}).
    \end{gather}
    By \cref{lemma:fb:v01-lower-bound} and \ref{lemma:sub:v-lower-bound}\,\cref{item:sub:v-lower-bound:m}, we have $b_N \ge 0$.
    Dividing \eqref{eq:fb:convergence:function:0} by $N\tau$ and using \cref{eq:sub:remainder-tight,lemma:sub:v-lower-bound}\,\cref{item:sub:v-lower-bound:opt}, the claim thus follows.
\end{proof}

The following corollary will be useful for inductively verifying various assumptions.

\begin{corollary}
    \label{cor:sub:bounded}
    Suppose \cref{ass:fb:all,ass:sub:energy,ass:sub:rebalancing} hold, and let $\opt\mu \in \Masses$.
    Pick $N \in \N$, and for initial $\mu^0 \in \Masses$ and $\tritrans^1 \in \ThreePlansCompatAlg(\gamma^1)$, generate $\{(\nexxt\mu, \nexxt\gamma)\}_{k=0}^{N-1}$ through the satisfaction of \cref{eq:fb:oc0} and either \eqref{eq:sub:remainder} or \eqref{eq:sub:remainder-tight} with \cref{ass:fb:subdiff}\,\ref{item:fb:subdiff:tau}.
    If $\inf F \ge i_F > -\infty$ and $G \ge \phi(\norm{\mu})$ for a coercive $\phi: [0, \infty) \to [0, \infty)$, then $\sup_{k=0,\ldots,N-1} \norm{\mu^k} \le m$ for a constant $m$ independent of $N$.
\end{corollary}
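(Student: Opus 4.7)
The plan is to follow the template of \cref{cor:fb:bounded}: extract from the function value convergence result a uniform-in-$K$ upper bound on $[F+G](\mu^K)$, then invoke $F \ge i_F$ and the coercivity of $G \ge \phi(\norm{\freevar})$ to conclude $\norm{\mu^K} \le m_\mu$.

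The hypotheses of \cref{cor:sub:convergence:function} and \cref{thm:sub:convergence:function-ergodic} depend only on the first $K$ iterations, so each applies with $N$ replaced by any $K \in \{1,\ldots,N\}$. In the ``Value $O(1/N)$'' case, direct application of \cref{cor:sub:convergence:function} gives
\[
    [F+G](\mu^K) \le [F+G](\opt\mu)
    + \frac{\fbmetricBar{0}^{0,2}(\mu^0, \opt\mu; \tritrans^1) + \constCurvature}{K\tau}
    + \frac{C_{\opt\mu}}{\tau}\cdot\frac{1}{K}\sum_{k=0}^{K-1}\nexxt\epsilon.
\]
Since $1/K \le 1$ for $K \ge 1$ and \cref{ass:fb:all}\,\cref{item:fb:all:tolerances} ensures that the Cesaro means $\frac{1}{K}\sum_{k=0}^{K-1}\nexxt\epsilon$ are uniformly bounded (any non-negative sequence with vanishing Cesaro mean has bounded Cesaro means), the right-hand side is bounded by some constant $C$ independent of $K$ and $N$. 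For the ``Ergodic $O(1/N)$'' case, the analogous extraction applied to the pre-Jensen summed descent inequality appearing in the proof of \cref{thm:sub:convergence:function-ergodic} yields
\[
    \sum_{k=0}^{K-1}\tau[F+G](\mu^{k+1})
    \le \tau K[F+G](\opt\mu)
    + \fbmetricBar{0}^{0,2}(\mu^0, \opt\mu; \tritrans^1)
    + \constCurvature
    + C_{\opt\mu}\sum_{k=0}^{K-1}\nexxt\epsilon,
\]
which by the same boundedness argument gives a uniform bound on the Cesaro average $\frac{1}{K}\sum_{k=1}^{K}[F+G](\mu^k)$.

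Finally, $F \ge i_F$ and $G \ge \phi(\norm{\freevar})$ give $\phi(\norm{\mu^K}) \le [F+G](\mu^K) - i_F$, so in the value case $\phi(\norm{\mu^K}) \le C - i_F$ and coercivity of $\phi$ yields $\norm{\mu^K} \le \sup \phi^{-1}([0, C-i_F]) < \infty$; setting $m_\mu \defeq \max\{\norm{\mu^0}, \sup \phi^{-1}([0, C-i_F])\}$ completes the claim. In the ergodic case the same conclusion is extracted by applying the averaged bound at every intermediate $K$, since the bounding constants do not degrade with $K$.

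The main technical point is the ergodic case, where the bound is naturally on an averaged quantity; however, since \cref{ass:fb:all}\,\cref{item:fb:all:tolerances} guarantees the boundedness of the Cesaro means uniformly over $K$, applying the argument at each horizon $K \in \{1,\ldots,N\}$ transfers the control to the full sequence of iterates, yielding a constant $m_\mu$ independent of $N$.
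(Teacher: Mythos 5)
Your ``Value $O(1/N)$'' argument is sound and is a legitimate (slightly different) route: you apply the already-proved rate at every horizon $K\le N$ and absorb the $1/K$ factors, whereas the paper re-telescopes the one-step inequality \eqref{eq:fb:convergence:onestep}. The only blemish there is that \cref{cor:sub:convergence:function} formally assumes \cref{ass:fb:subdiff}\,\cref{item:fb:subdiff:tau}, which \cref{cor:sub:bounded} does not; this is repairable, since the needed nonnegativity $\fbmetricM{k}\ge 0$ already follows from \cref{lemma:sub:v-lower-bound}\,\cref{item:sub:v-lower-bound:m} under \cref{ass:sub:energy}\,\cref{item:sub:energy:tau}, but you should say so rather than invoke the corollary wholesale.

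The ergodic case, however, has a genuine gap. A uniform bound on the running averages $\frac{1}{K}\sum_{k=1}^{K}[F+G](\mu^k)\le C$, together with the lower bound $[F+G]\ge i_F$, does \emph{not} bound the individual values $[F+G](\mu^K)$: differencing consecutive partial sums only yields $[F+G](\mu^K)\le K(C-i_F)+i_F$, which grows linearly in $K$. Concretely, a nonnegative sequence with $a_k=\sqrt{k}$ when $k$ is a perfect square and $a_k=0$ otherwise has bounded Ces\`aro means but is unbounded, so ``applying the averaged bound at every intermediate $K$'' cannot transfer control to the iterates. What the paper does instead is insert $[F+G](\opt\mu)\le[F+G](\this\mu)$ into \eqref{eq:fb:convergence:onestep} to obtain the per-step Lyapunov descent
\[
    \tau[F+G](\nexxt\mu)+\fbmetricBar{k+1}^{0,2}(\nexxt\mu,\opt\mu;\tritrans^{k+2})
    \le
    \tau[F+G](\this\mu)+\fbmetricBar{k}^{0,2}(\this\mu,\opt\mu;\nexxt\tritrans)
    +\nexxt{\remainder}(\nexxt\tritrans,\opt\mu),
\]
and then telescopes; combined with $\fbmetricBar{N}^{0,2}\ge-\constCurvature$ and $F+G\ge i_F+\phi(\norm{\freevar})$ this bounds each $\phi(\norm{\mu^K})$ directly rather than on average. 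You need this (or an equivalent per-iterate descent) for the ergodic branch; the averaged inequality alone does not suffice.
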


\begin{proof}We show the case \eqref{eq:sub:remainder}.
    The case \eqref{eq:sub:remainder-tight} follows
    similarly from
    \eqref{eq:fb:convergence:function:0}.

    Using $[F+G](\this\mu) \ge [F+G](\opt\mu)$ in \eqref{eq:fb:convergence:onestep} in the proof of \cref{thm:sub:convergence:function-ergodic} establishes
    \[
        \tau[F+G](\nexxt\mu)
        +
        \fbmetric{k+1}(\nexxt\mu, \opt\mu; \pi_\#^{0,2}\tritrans^{k+2})
        \le
        \tau[F+G](\this\mu)
        + \fbmetric{k}(\this\mu, \opt\mu; \pi_\#^{0,2}\nexxt\tritrans)
        + \nexxt{\remainder}(\opt\mu, \nexxt\tritrans).
    \]
    Summing this over $k=0,\ldots,N-1$, using  \cref{ass:fb:all}\,\cref{item:fb:all:tolerances}, \cref{lemma:sub:v-lower-bound}\,\cref{item:sub:v-lower-bound:opt}, and the bounds on $F$ and $G$, we obtain
    \[
        \tau i_F - \constCurvature
        + \tau \phi(\norm{\mu^N}_\Meas)
        \le
        \tau[F+G](\mu^0)
        + \fbmetric{0}(\mu^0, \opt\mu; \pi_\#^{0,2}\tritrans^1)
        + \sum_{k=0}^{N-1} C\nexxt\epsilon.
    \]
    Minding \cref{ass:fb:all}\,\cref{item:fb:all:tolerances}, we thus establish $\norm{\mu^N} \le m$ for a constant $m$ independent of $N$.
\end{proof}

\section{Controls on the remainder}
\label{sec:controls}

To complete \cref{alg:fb:rough}, we now develop explicit ways to enforce the remainder conditions
of \cref{thm:fb:subdiff-conv,thm:sub:convergence:function-ergodic,cor:sub:convergence:function} through simple bounds and aposteriori checks.
The approach we present here is just one of many possibilities.

\subsection{General scheme}
\label{sec:controls:general}

Recall from \eqref{eq:fb:r-def} that
\begin{equation}
    \label{eq:sub:controls:r-def}
    \begin{split}
    \nexxt{\remainder}(\mu, \tritrans)
    &
    \defeq
    - \dualprod{\nexxt{\tilde\epsilon}}{\mu- \nexxt\mu}
    - \tau\ellCurvature\pi_\#^{0,1}\abs{\tritrans}(c_2)
    \\
    \MoveEqLeft[-1]
    +\int_{\Omega^3}\tau\iprod{\grad \this v(x)}{z-y}
    +\nexxt\omega(z)-\nexxt\omega(y)
    \d\tritrans(x, y, z).
    \end{split}
\end{equation}
From \eqref{rem:fb:r-check-eq} we also recall that
\[
    \nexxt{\check\remainder}=\nexxt{\remainder}(\this \mu, \nexxt\tritrans_{\reverse}).
\]
We need to bound $\nexxt{\remainder}(\nexxt\mu, \nexxt\tritrans)$ by a factor of $\nexxt\epsilon$ for \cref{eq:sub:remainder,eq:sub:remainder-tight} (ergodic function value convergence, and function value covergence, respectively) to hold.
Likewise, we need to bound $\nexxt{\check\remainder}$ by a factor of $\nexxt\epsilon$ for \eqref{eq:fb:remainder} to hold (subdifferential convergence), and by a factor of $\nexxt\epsilon/(k+1)$ for \eqref{eq:sub:remainder-tight} (also required for function value convergence).
In the next lemma and the examples that follow it, we represent \emph{one practical possibility} for this bounding.
To use the lemma to prove \eqref{eq:fb:remainder}, we take $\mu=\this\mu$ and $\tritrans=\nexxt\tritrans_{\reverse}$, in which case $\nexxt{\remainder}(\mu, \tritrans)=\nexxt{\check\remainder}$.
To prove \eqref{eq:sub:remainder}, we take $\mu=\opt\mu$ and $\tritrans=\nexxt\tritrans$.
To prove \eqref{eq:sub:remainder-tight}, we perform both applications.

\begin{lemma}\label{lemma:fb:residual-bound:radon}
    Let $\mu \in \Masses$ and $\tritrans \in \ThreePlansCompatAlg(\nexxt\gamma)$.
    Assume:
    \begin{enumerate}[label=(\roman*)]
        \item\label{item:fb:residual-bound:marginal-strength}
        We have $
            \dualprod{\nexxt{\tilde\epsilon}}{\nexxt\mu-\mu}
            \le
            2 m \nexxt\epsilon
        $
        for some $m \ge 0$.
        \item\label{item:fb:residual-bound:control}
        For some $\Ctrans>0$, we impose
        \[
            \int_{\Omega^3}
                \sign\nexxt\gamma(x,y)[
                \tau \iprod{\grad \this v(x)}{z-y}
                + \nexxt\omega(z) - \nexxt\omega(y)
                ]
                - \frac{\tau\ellCurvature}{2}\norm{x-y}^2
            \d\abs{\tritrans}(x, y, z)
            \le \Ctrans\nexxt\epsilon.
        \]
    \end{enumerate}
    Then
    $
        \nexxt{\remainder}(\mu, \tritrans)
        \le
        (2m + \Ctrans)\nexxt\epsilon.
    $
\end{lemma}

\begin{proof}Using \cref{item:fb:residual-bound:marginal-strength} to estimate the first term of \eqref{eq:sub:controls:r-def}, and then \cref{lemma:sub:integrals} to transform the integration measure, we establish
    \[
        \begin{split}
        &\nexxt{\remainder}(\mu, \tritrans)
        \le
        2m\nexxt\epsilon
        +
        \int_{\Omega^3}
            \sign\nexxt\gamma(x,y)\left[
            \tau\iprod{\grad \this v(x)}{z-y}
            +\nexxt\omega(z) - \nexxt\omega(y)
            \right]
        \d\abs{\tritrans}(x, y, z)
        \\
        \MoveEqLeft[-1]
        - \tau\int_{\Omega^2} \frac{\ellCurvature}{2}\norm{x-y}^2 \d\pi_\#^{0,1}\abs{\tritrans}(x, y)
        \\
        & =
        2m\nexxt\epsilon
        +
        \int_{\Omega^3}
            \sign \nexxt\gamma(x, y)\left[
            \tau \iprod{\grad \this v(x)}{z-y}
            + \nexxt\omega(z)
            - \nexxt\omega(y)
            \right]
            -
            \frac{\ellCurvature}{2}\norm{x-y}^2
        \d\abs{\tritrans}(x, y, z).
        \end{split}
    \]
    The claim now follows by applying \cref{item:fb:residual-bound:control}.
\end{proof}

The conditions of the lemma are satisfied as follows:
\begin{enumerate}\item
    The condition \cref{item:fb:residual-bound:marginal-strength} strengthens the marginal condition \eqref{eq:fb:oc0:marginal}, i.e.,
    $
        -\nexxt\epsilon
        \le
        \nexxt{\tilde\epsilon}%
        \le
        \nexxt\epsilon.
    $
    We can ensure it when we solve for $\nexxt\mu$ in \cref{alg:fb:rough}.
    In fact, we can \emph{inductively} bound $\norm{\this\mu}$ using \cref{cor:fb:bounded} or \ref{cor:sub:bounded}, depending on the mode of convergence sought.
    When $\mu=\opt\mu$ is the unknown solution for \cref{eq:sub:remainder,eq:sub:remainder-tight}, it is by definition bounded.
    Therefore, using
    $
        -\nexxt\epsilon
        \le
        \nexxt{\tilde\epsilon}
        \le
        \nexxt\epsilon,
    $
    we see that \cref{item:fb:residual-bound:marginal-strength} is automatically satisfied for some bound $m > 0$ on $\norm{\mu}$ and $\norm{\nexxt\mu}$.

    \item The condition \cref{item:fb:residual-bound:control} can be satisfied by \emph{a posteriori}, after solving for $\nexxt\mu$, further reducing the mass of $\nexxt\gamma$, if necessary.
    This is done on \cref{step:fb:rough:reduce} of \cref{alg:fb:rough}.
    For the reduction, there are various alternative strategies: uniform reduction, soft-thresholding, removing the smallest or largest weights entirely, in order, etc.

    Practically, to avoid an explosion in the number spikes in $\nexxt\mu$, we do not want to have $\beta_i \ne 0$ and $\beta_i \ne \alpha_i$ for many indices $i$.
    We recall from \cref{alg:fb:rough} that the weights $\beta_i,\alpha_i$ are such that
    \begin{equation*}
        \this\mu = \sum_{i=1}^n \alpha_i\delta_{x_i}
        \quad\text{and}\quad
        \nexxt\gamma = \sum_{i=1}^n \beta_i\delta_{(x_i, y_i)}.
    \end{equation*}
    Thus, our implementation \cite{tuomov-pointsource-codes} orders the spikes of $\this\mu$ by age, and, in order, reduces $\beta_i$ corresponding to the newest spikes, if the contribution to the integral in \cref{item:fb:residual-bound:control} is positive, until the condition is satisfied.
    The next remarks and examples indicate how the integral can be evaluated and estimated in specific cases.
\end{enumerate}

\subsection{Specific cases}
\label{sec:controls:specific}

\begin{remark}[Subdifferential convergence]
    \label{rem:control:subdiff}
    For $\tritrans=\nexxt\tritrans_{\reverse}$, i.e., for proving \eqref{eq:fb:remainder} for \cref{thm:fb:subdiff-conv} (and part of \eqref{eq:sub:remainder-tight} for \cref{cor:sub:convergence:function}), the condition in \cref{item:fb:residual-bound:control} simplifies to
    \begin{rawalg}\abovedisplayskip=0pt
    \[
        \int_{\Omega^2}
            \sign\nexxt\gamma(x,y)[
            \tau \iprod{\grad \this v(x)}{x-y}
            - \nexxt\omega(y) + \nexxt\omega(x)
            ]
            - \frac{\tau\ellCurvature}{2}\norm{x-y}^2
        \d\abs{\nexxt\gamma}(x, y)
        \le \Ctrans\nexxt\epsilon.
    \]
    \end{rawalg}
\end{remark}

\begin{remark}[Function value convergence]
    \label{rem:control:value}
    To prove \cref{eq:sub:remainder} for \cref{thm:sub:convergence:function-ergodic} (and part of \eqref{eq:sub:remainder-tight} for \cref{cor:sub:convergence:function}), we take the supremum over $z \in \Omega$ inside the integral in \cref{item:fb:residual-bound:control}.
    As the integrand then becomes independent of $z$, we may continue with \cref{lemma:sub:integrals} to transform the unknown integration measure $\abs{\tritrans}$ into the known $\abs{\nexxt\gamma}$.
    We, thus, see \cref{item:fb:residual-bound:control} to hold if
    \begin{rawalg}\abovedisplayskip=0pt
    \[
        \int_{\Omega^2}
            \sup_{z \in \Omega}
            \sign\nexxt\gamma(x,y)
            [
            \tau \iprod{\grad \this v(x)}{z-y}
            - \nexxt\omega(y) + \nexxt\omega(z)
            ]
            - \frac{\tau\ellCurvature}{2}\norm{x-y}^2
        \d\abs{\nexxt\gamma}(x, y)
        \le \Ctrans\nexxt\epsilon.
    \]
    \end{rawalg}
\end{remark}

The possibly unconstructed function $\nexxt\omega$, we treat as follows:

\begin{example}[Particle-to-wave marginal term]
    \label{ex:fb:residual-bound:wave}
    For $E=E_\Wave$, the function $\nexxt\omega=\Wave(\nexxt\mu-\this{\breve\mu})$ is explicitly defined, so the integrals in \cref{rem:control:subdiff,rem:control:value} can be evaluated.
\end{example}

\begin{example}[Radon-squared marginal term]
    \label{ex:fb:residual-bound:radon}
    For $E=E_\Meas$, we cannot explicitly form $\nexxt\omega \in \norm{\nexxt\mu-\this{\breve\mu}}\subdiff\norm{\freevar}_\Meas(\nexxt\mu-\this{\breve\mu})$, but can bound
    $\nexxt\omega(z) \in \norm{\nexxt\gamma-\this{\breve\mu}}[-1,1]$ for all $z \in \Omega$.
    When $z=x$ or $z=y$, we can improve this estimate, as either the upper or lower bound is reached if $\nexxt\gamma$ differs from $\this{\breve\mu}$ at $z$.
    Recall that both measures are finite sums of Dirac measures.
    Indeed, by the construction of $\this{\breve\mu}$, and the initialisation and updates of $\nexxt\gamma$ in \cref{alg:fb:rough}, both involve the support of $\this\mu$, and their transported locations determined by \eqref{eq:fb:oc0:transport}.
\end{example}

\section{Extensions}
\label{sec:extensions}

We now sketch the extension of the work in \cref{sec:fb,sec:sub} to primal-dual methods for the problem
\begin{multline}
    \label{eq:extensions:problem0}
    \min_{\mu \in \Masses,\, z \in Z}~ F(\mu, z) + G(\mu) + R(z) + H(K(\mu, z))
    \\
    =
    \min_{\mu \in \Masses,\, z \in Z}\sup_{y \in Y}~ F(\mu, z) + G(\mu) + R(z) + \dualprod{K(\mu,z)}{y} - H^*(y),
\end{multline}
where $Z$ and $Y$ are Hilbert spaces, $F: \Masses \times Z \to \R$ is Fréchet differentiable; $G: \Masses \to \extR$, $R: Z \to \extR$, $H: Y \to \extR$ are convex, proper, and lower semicontinuous; and $K=(K_\mu, K_z) \in \linear(\Masses \times Z; Y)$.
By taking $K=0$ and $H=0$, this model can also treat the extension of forward-backward splitting to the product space $\Masses \times Z$.

We first formulate the algorithm in \cref{sec:extensions:algorithm}.
Then, in \cref{sec:extensions:convergence.fb}, we sketch the convergence of subdifferentials for the special case of forward-backward splitting.
Afterwards, in \cref{sec:extensions:convergence}, through an extension of \cref{lemma:fb:fejer}, we sketch the convergence of primal-dual gaps in the general case.
In \cref{sec:extensions:step-lengths}, for a specific case, we provide a more explicit expression of the step length conditions in the latter result.

\subsection{Algorithm}
\label{sec:extensions:algorithm}

\begin{algorithm}
    \caption{Sliding primal-dual proximal splitting in a product space}
    \label{alg:extensions:spdps}
    \begin{algorithmic}[1]
        \Require Setting of \cref{sec:extensions} with $G=\alpha\norm{\freevar}_\Meas + \delta_{\ge 0}$ and either $E=E_\Meas$ or $E=E_\Wave$ for a self-adjoint and positive semi-definite $\Wave \in \linear(\Masses; \DiffPredual)$.
        \Ensure \cref{ass:extensions:fb:subdiff,ass:extensions:fb:subdiff} or, if $E=E_\Wave$, alternatively, \cref{ass:extensions:value}.
        \State Follow \cref{alg:fb:rough} with the following changes:
        \State
        On \cref{step:fb:rough:v,step:fb:rough:vbreve}, set\label{step:extensions:spdps:v}
        $\this v= F^{(\mu)}(\this\mu, \this z) + K_\mu^*\this y$
        and
        $\this{\breve v}= F^{(\mu)}(\this{\breve\mu}, \brevez^{k+1}) + K_\mu^*\this y$,
        respectively.
        \State
        Before \cref{step:fb:rough:prune}, update\label{step:extensions:spdps:aux}
        \[
            \left\{
                \begin{array}{l}
                    \nexxt z \defeq \prox_{\sigma_p R}(\this z - \sigma_p \grad_z F(\this{\breve\mu}, \this z) - \sigma_p K_z^*\this y),
                    \\
                    \nexxt y \defeq \prox_{\sigma_d H^*}(\this y + \sigma_d K(2\nexxt \mu-\this \mu, 2\nexxt z-\this z))
                \end{array}
            \right.
        \]
    \end{algorithmic}

\end{algorithm}

We sketch the primal-dual method in \cref{alg:extensions:spdps}.
To analyse the method, we follow the approach of \cite[Section 4]{tuomov2024tracking} to represent it as a generalised forward-backward method.
We first write the problem \eqref{eq:extensions:problem0} in terms of the optimality conditions
\begin{equation}
    \label{eq:extensions:problem}
    0 \in \extF(u) + \extG(u) + \extXi u,
\end{equation}
where, for
\[
    u=(\mu, z, y) = (\mu, q) \in U \defeq \Masses \times Q
   \quad\text{with}\quad
   Q \defeq Z \times Y,
\]
we set
\begin{equation}
    \label{eq:extensions:extended-functions}
    \extG(u) = G(\mu) + R(z) + H^*(u), \quad
    \extF(u) = F(\mu, z),
    \quad\text{and}\quad
    \extXi = \begin{pmatrix}
        0 &  K^*  \\
        -K & 0 \\
    \end{pmatrix}.
\end{equation}
We also write for brevity
\[
    \GQ(z, y) \defeq R(z) + H^*(y).
\]

For step lengths $\sigma_p,\sigma_d>0$ that satisfy $\sigma_p\sigma_d \norm{K}^2 \le 1$, we set
\begin{equation}
    \label{eq:extensions:uwave}
    \UWave
    =
    \tau\begin{pmatrix}
        0 & 0 & -K_\mu^* \\
        0 & \inv\sigma_p \Id & -K_z^* \\
        -K_\mu & -K_z & \inv\sigma_d \Id
    \end{pmatrix}
    \quad\text{so that}\quad
    \tau\Xi + \UWave
    =
    \tau\begin{pmatrix}
        0 & 0 & 0 \\
        0 & \inv\sigma_p \Id & 0 \\
        -2K_\mu & -2K_z & \inv\sigma_d \Id
    \end{pmatrix}.
\end{equation}
Then $\UWave \in \linear(U; U_*)$ is positive semi-definite and self-adjoint, while $\Xi \in \linear(U; U_*)$ is skew-symmetric.
Let $U_* = \Predual \times Q$, recalling that $Q$ is a Hilbert space.
Write $\PPredual \defeq \begin{psmallmatrix} \Id &  0 & 0 \end{psmallmatrix}\in \linear(U_*; \Predual)$ and $P_{Q} \defeq \begin{psmallmatrix} 0 &  \Id & 0 \\ 0 & 0 & \Id \end{psmallmatrix} \in \linear(U_*; Q)$ for the projection operators from $U_*$ to $\Predual$ and $Q$.

\begin{rawalg}
    \begin{subequations}
    \label{eq:extensions:alg}
    We can now write \cref{alg:extensions:spdps} in an implicit expanded form.
    Let
    \begin{equation}
        \label{eq:extensions:u}
        \thisu \defeq (\this\mu, \this q) = (\this\mu, \this z, \this y)
        \quad\text{and}\quad
        \this{\breve u} \defeq (\this{\breve\mu}, \this q) = (\this{\breve\mu}, \this z, \this y).
    \end{equation}
    To update $\nexxt\mu$ and $\nexxt\gamma$, we solve \eqref{eq:fb:oc0} and one of the remainder condition \eqref{eq:fb:remainder}, \eqref{eq:sub:remainder}, or \eqref{eq:sub:remainder-tight}, with the redefinitions
    \begin{equation}
        \label{eq:extensions:v}
        \this v  \defeq  \extF^{(\mu)}(\thisu) + P_{\Meas_*}\extXi\thisy
        \quad\text{and}\quad
        \this{\breve v} \defeq  \extF^{(\mu)}(\this{\breve u}) + P_{\Meas_*}\extXi\thisy.
    \end{equation}
    This is \cref{step:extensions:spdps:v} of \cref{alg:extensions:spdps}.
    To update $\nexxt q$, we solve
    \begin{equation}
        \label{eq:extensions:non-measure-oc}
        0 \in \tau[\extF^{(q)}(\this{\breve u}) + \subdiff \GQ(\nexxt q) + P_{Q}\extXi\nextu] +  P_{Q}\UWave(\nextu-\thisu).
    \end{equation}
    This is \cref{step:extensions:spdps:aux} of \cref{alg:extensions:spdps}.
    \end{subequations}
\end{rawalg}

\subsection{Subdifferential convergence of a forward-backward method}
\label{sec:extensions:convergence.fb}

Suppose $H=0$ and $K=0$, so that \cref{alg:extensions:spdps} becomes a forward-backward method in the product space $\Masses \times Z$.
The iterates $\thisy \equiv 0$ are then redundant.
W can extend \cref{thm:fb:subdiff-conv}, which allows $E=E_\Meas$, to this setting.
We start by adapting \cref{ass:fb:all,ass:fb:subdiff}:

\begin{assumption}\label{ass:extensions:fb:subdiff}
    Besides the basic assumptions regarding $F$ and $G$ after  \eqref{eq:extensions:problem0}, and taking $H=0$ and $K=0$, we have:
    \begin{enumerate}[label=(\roman*)]
        \item\label{item:extensions:fb:all:e}
        \textbf{Convex energy:}
        $E: \Masses \times \Masses \to [0, \infty]$ is convex in the second parameter and $E(\nu,\nu)=0$ for all $\nu \in \Masses$.
        \item\label{item:extensions:fb:all:f}
        \textbf{Smooth data term:}
        $F: \Masses \times Z \to \R$ is Fréchet differentiable with $F^{(\mu)}(\mu, z) \in \DiffPredual$ for all $(\mu, z)$, and satifies for some $\ell,L,L_z \ge 0$ for all all $\mu,\nu \in \Masses$, $\gamma \in \TwoPlansSpace$, and $z, \tilde z \in Z$ the smoothness property
        \[
            B_F(\mu + (\pi_\#^1-\pi_\#^0)\gamma, \nu)
            \le
            V_{\ell c_2, L E}(\mu, \nu; \gamma)
            +
            \frac{L_z}{2}\norm{z-\tilde z}_{Z}^2.
        \]

        \item\label{item:extensions:fb:all:tolerances}
        \textbf{Vanishing tolerances:}
        The tolerances $\{\nexxt\epsilon\}_{k \in \N} \subset [0, \infty)$ satisfy
        $
            \lim_{N \to \infty} \frac{1}{N}\sum_{k=0}^{N-1} \nexxt\epsilon = 0.
        $

        \item\label{item:extensions:fb:subdiff:curvature}
        \textbf{Curvature lower bound:}
        For all $k \in \N$, we have $\ellF \abs{\nexxt\gamma}(c_2) \ge -B_{\bar F}(\this{\breve u},\thisu)$ for some $\ellF \ge 0$
        (e.g., $F$ is convex).

        \item\label{item:extensions:fb:subdiff:tau}
        \textbf{Step length bounds:}
        The step lengths $\tau,\theta>0$ and the parameters $\ell, \ellF, L$, and $L_z$ satisfy $\tau L < 1$, $\sigma_p L_z < 2$, and $\theta \tau[\ell+\ellCurvature+\ellF] < 2$.

        \item\label{item:extensions:fb:subdiff:e-f}
        \textbf{Marginal continuity:}
        $F'$ is continuous with respect to $E$ in the sense that,
        for any sequence $\{(\this\mu, \nexxt\gamma)\}_{k \in \N} \subset \Masses \times \TwoPlansSpace$,
        the convergences $E(\this\mu - \pi_\#^0\nexxt\gamma, \nexxt\mu - \pi_\#^1\nexxt\gamma) \to 0$ and $\nextz - \thisz \to 0$ imply
        $F'(\this\mu + (\pi_\#^1-\pi_\#^0)\nexxt\gamma, \thisz) - F'(\nexxt\mu, \nexxt z) \to 0$.

        \item\label{item:extensions:fb:subdiff:e-fenchel}
        \textbf{Inverse continuity of conjugate energy:}
        Let $\marginalEnergy{k}$ be as defined in \eqref{eq:fb:marginalenergy}.
        Then, for any sequence $\{\this\omega\}_{k \in \N} \subset \Predual$, the convergence $\marginalEnergy{k}^*(\this\omega) \to 0$ implies $\norm{\this\omega}_{\Predual} \to 0$.

        \item\label{item:extensions:fb:subdiff:f}
        \textbf{Objective lower bound:}
        $\inf [\extF+\extG] > -\infty$.
    \end{enumerate}
\end{assumption}

The next result establishes the convergence of subdiffereials for the product-space forward-backward method:

\begin{theorem}[Subdifferential convergence]
    With $H=0$ and $K=0$, suppose \cref{ass:extensions:fb:subdiff} holds, and that $\{(\this\mu, \this z, \nexxt\gamma)\}_{k \in \N}$ are generated by \cref{alg:extensions:spdps}.
    Then
    \[
        \inf_{\nexxt w \in \subdiff G(\nexxt\mu), \nexxt p \in \subdiff R(\nexxt z)} \norm{F'(\nexxt\mu, \nexxt z)+(\nexxt w, \nexxt p)} \to 0.
    \]
\end{theorem}

\begin{proof}The proof follows the outline of \cref{thm:fb:subdiff-conv}.
    Note from \eqref{eq:extensions:u} that now, as the $y$ variable is fixed to zero, we have
    $\thisu \defeq (\this\mu, \this z, 0)$ and $\this{\breve u} \defeq (\this{\breve\mu}, \this z, 0)$.
    Let $\nexxt w \in \subdiff G(\nexxt\mu)$ and $\nexxt p \in \subdiff R(\nexxt z)$.
    Then, due to $H=0$, we have $(\nexxt w, \nexxt p, 0)  \in \subdiff \extG(\nexxt u)$.
    By \eqref{eq:extensions:v}, since now $\extXi = 0$, we have $\this{\breve v} = F^{(\mu)}(\this{\breve u})$.
    By the subdifferentiability of $G$, and the definition of $\extF$ and the Bregman divergence, we, thus, have
    \begin{equation}
        \label{eq:extensions:fb:0}
        \begin{split}
        [\extF+\extG](\thisu) - [\extF+\extG](\nextu)
        &
        \ge
        \dualprod{\this{\breve v} + \nexxt w}{\this\mu-\nexxt\mu}
        + \iprod{\grad_z F(\this{\breve u}) + \nexxt p}{\this{z}-\nexxt{z}}
        \\
        \MoveEqLeft[-1]
        + B_{\extF}(\this{\breve u},\this u)
        - B_{\extF}(\this{\breve u},\nexxt u).
        \end{split}
    \end{equation}
    From \eqref{eq:extensions:non-measure-oc}, for some choice of $\nexxt p \in  \subdiff R(\nexxt z)$, we have
    \begin{equation}
        \label{eq:extensions:fb:z-cond}
        0 = \grad_z F(\this{\breve u}) + \nexxt p + \inv\sigma_p (\nextz - \thisz).
    \end{equation}
    (Observe that $\UWave$ in \eqref{eq:extensions:uwave} is multiplied by $\tau$, same as the first part of \eqref{eq:extensions:non-measure-oc}. Therefore, $\tau$ disappears here entirely.)
    By \cref{ass:extensions:fb:subdiff}\,\cref{item:extensions:fb:all:f}, we also have
    \begin{equation}
        \label{eq:extensions:fb:1}
        B_{\extF}(\this{\breve u},\nextu)
        \le
        V_{\ell c_2, LE}(\this\mu,\nexxt\mu; \nexxt\gamma)
        + \frac{L_z}{2}\norm{\nextz-\thisz}_{Z}^2.
    \end{equation}
    Using \cref{eq:extensions:fb:z-cond,eq:extensions:fb:1} in \cref{eq:extensions:fb:0}, we obtain
    \begin{equation}
        \label{eq:extensions:fb:2}
        \begin{split}
        [\extF+\extG](\thisu) - [\extF+\extG](\nextu)
        &
        \ge
        \dualprod{\this{\breve v} + \nexxt w}{\this\mu-\nexxt\mu}
        - V_{\ell c_2, LE}(\this\mu,\nexxt\mu; \nexxt\gamma)
        \\
        \MoveEqLeft[-1]
        +\left(\frac{1}{\sigma_p}-\frac{L_z}{2}\right)\norm{\nextz-\thisz}_Z^2
        + B_{\extF}(\this{\breve u},\this u).
        \end{split}
    \end{equation}
    We now observe that \eqref{eq:fb:monotonicity:0} in the proof of \cref{lemma:fb:monotonicity} continues to hold for $\nexxt{\check\remainder}$ as defined in \eqref{eq:fb:remainder} for the modified definitions of $\this{\breve v}$ and $\this v$.
    Multiplying \eqref{eq:extensions:fb:2} by $\tau$, and using \cref{eq:fb:monotonicity:0} and \cref{ass:extensions:fb:subdiff}\,\ref{item:extensions:fb:subdiff:curvature}, we, thus, obtain
    \[
        \tau [\extF+\extG](\nextu)
        + \marginalEnergy{k}^*(\nexxt\omega)
        + \fbmetricM{k}(\thisu, \nextu; \nexxt\gamma)
        \le
        \tau [\extF+\extG](\thisu)
        + \nexxt{\check\remainder},
    \]
    where, for $u=(\mu, z, 0)$ and $\tilde u=(\nu, \tilde z, 0)$, we define
    \[
        \fbmetricM{k}(u, \tilde u; \gamma)
        \defeq
        V_{(2\inv\theta -\tau[\ell+\ellCurvature+\ellF])c_2, (1-\tau L)E}(\mu, \nu; \gamma)
        + \tau\left(\frac{1}{\sigma_p}-\frac{L_z}{2}\right)\norm{\tilde z - z}_{Z}^2.
    \]
    By \cref{ass:extensions:fb:subdiff}\,\cref{item:extensions:fb:subdiff:tau}, we have $\fbmetricM{k} \ge 0$.
    Now we sum this over $k$ to obtain for some $C>0$, as the proof of \cref{thm:fb:subdiff-conv} that
    \[
        \sum_{k=0}^\infty\left(
            \marginalEnergy{k}^*(\nexxt\omega)
            + \fbmetricM{k}(\thisu, \nextu; \nexxt\gamma)
        \right)
        \le C.
    \]
    Due to \eqref{eq:extensions:fb:z-cond} and \cref{ass:extensions:fb:subdiff}\,\cref{item:extensions:fb:subdiff:tau}, this gives $\grad_z F(\this{\breve u}) + \nexxt p \to 0$.
    The convergence of the $\mu$-subdifferential follows as in the proof of  \cref{thm:fb:subdiff-conv}.
\end{proof}

\subsection{Gap convergence of the full primal-dual method}
\label{sec:extensions:convergence}

We now sketch extensions of \cref{lemma:fb:fejer,thm:extensions:convergence:function-ergodic} for the full primal-dual method.
For this we define the \term{Lagrangian gap functional}
\[
    \gap(u; \bar u) \defeq [\extF + \extG](x) - [\extF + \extG](\bar u) - \dualprod{\extXi u}{\bar u}_{U^*,U}
    \quad (u, \bar u \in U).
\]
For $\optu$ a solution to \eqref{eq:extensions:problem}, we have $\gap(u; \optu) \ge 0$ \cite[Lemma 11.1]{clasonvalkonen2020nonsmooth}\footnote{The lemma is written in Hilbert spaces, but the proof remains unchanged in general normed spaces.}.

The results here do not apply to the marginal term $E=E_{\Meas}$, as they require \eqref{eq:unbalanced:energy}.
For simplicity, we also require that $F$ be convex. Then $\ellF=0$. Nonconvex $F$ with sufficient local growth, could be treated using a priori--a posteriori locality arguments following \cite{tuomov-nlpdhgm-redo,tuomov-firstorder}.

In the following assumption, we adapt \cref{ass:fb:all,ass:sub:energy,ass:sub:rebalancing}.
Parts \cref{item:extensions:all:f,item:extensions:all:tau} ensure bounds analogous to \cref{lemma:sub:v-lower-bound} on $\fbmetricBar{k}^{i,2}$ and $\fbmetricL{k}$.

\begin{assumption}\label{ass:extensions:value}
    Besides the basic assumptions regarding $F, G, H$, and $K$ after  \eqref{eq:extensions:problem0}, we assume that
    \begin{enumerate}[label=(\roman*)]
        \item\label{item:extensions:all:e}
        \textbf{Sub-Pythagorean energy:}
        $E: \Masses \times \Masses \to [0, \infty]$ satisfies \eqref{eq:unbalanced:energy} with $E(\nu,\nu)=0$ for all $\nu \in \Masses$.
        \item\label{item:extensions:all:f}
        \textbf{Convex, smooth data term:}
        $F: \Masses \times Z \to \R$ is convex and pre-differentiable, $\PPredual F'(u) \in \DiffPredual$ for all $u \in U$. For some $\ell, L \ge 0$, for all $\mu,\nu \in \Masses$; $q,p \in Q$, for all $\gamma \in \TwoPlansSpace$ and the extended function $\extF$ from \eqref{eq:extensions:extended-functions}, we have
        \[
            V_{\ell c_2, L E}(\mu, \nu; \gamma)
            + \frac{1}{2\tau}\norm{(\mu,q)-(\nu,p)}_{\UWave}^2
            \ge
            B_{\extF}((\mu + (\pi_\#^1-\pi_\#^0)\gamma, \breveq), (\nu, p)).
        \]

        \item\label{item:extensions:all:tau}
        \textbf{Step length bounds:}
        The step length parameters $\tau,\theta>0$ and the factors $\ell, \ellCurvature$, and $L$ satisfy $\tau L \le 1$ and $\theta \tau [\ell+\ellCurvature] \le 1$.

        \item\label{item:extensions:all:tolerances}
        \textbf{Vanishing tolerances:}
        The tolerances $\{\nexxt\epsilon\}_{k \in \N} \subset [0, \infty)$ satisfy
        $
            \lim_{N \to \infty} \frac{1}{N}\sum_{k=0}^{N-1} \nexxt\epsilon = 0.
        $
        \item\label{item:extensions:all:rebalancing}
        \textbf{Rebalancing:}
        For all $k \in \N$, for given $\nexxt u \in \Masses$, $\nexxt\gamma \in \TwoPlansSpace$, and $\nexxt\tritrans \in \ThreePlansCompatAlg(\nexxt\gamma)$,
        the choice of
        $\gamma^{k+2} \in \TwoPlansSpace$ and $\tritrans^{k+2} \in \ThreePlansCompatAlg(\gamma^{k+2}) \isect \ThreePlansNext(\nexxt\tritrans)$
        satisfies
        $
            \fbmetricBar{k+1}^{1,2}(\nexxt u, \optu; \nexxt\tritrans)
            \ge
            \fbmetricBar{k+2}^{0,2}(\nexxt u, \optu; \tritrans^{k+2})
        $
        for all $\opt u \in U$.
    \end{enumerate}
\end{assumption}

We will return to step length parameter choices in \cref{lemma:extensions:spdps:step-lengths}.
The next result extends the first part of \cref{lemma:fb:fejer}.
There, for $u=(\mu, q)$ and $\tilde u=(\nu, \tilde q)$, writing $\breve u \defeq (\mu + (\pi_\#^1-\pi_\#^0)\gamma, q)$, we extend the distance definitions \cref{eq:sub:metric02-12,eq:sub:metric01} as
\begin{subequations}
\label{eq:extensions:metrics}
\begin{align}
    \fbmetricBar{k}^{i,2}(u, \tilde u; \tritrans)
    &
    \defeq
    \bar V_{\inv\theta c_2, E}^{i,2}(\mu, \nu; \tritrans)
    + \frac{1}{2}\norm{u - \tilde u}_{\UWave}^2
    \quad\text{for } i=0,1,
    \quad\text{ and}
    \\
    \fbmetricL{k}(u, \tilde u; \tritrans)
    &
    \defeq
    \bar V_{(\inv\theta -\tau\ell)c_2, (1-\tau L)E}^{0,1}(\mu, \nu; \tritrans)
        + \frac{1}{2}\norm{u - \tilde u}_{\UWave}^2.
\end{align}
\end{subequations}
It follows from \cref{ass:extensions:value} and the definition of $\UWave$ in \eqref{eq:extensions:uwave} that these functions are non-negative.

\begin{lemma}\label{lemma:extensions:descent}
    Suppose \cref{ass:extensions:value}\,\cref{item:extensions:all:e,item:extensions:all:f%
    } hold, and that $k \in \N$ and $(\this\mu, \this q, \nexxt \gamma) \in \Masses \times Q \times \TwoPlansSpace$ are given.
    If \cref{eq:fb:oc0,eq:extensions:alg} hold, then for any $u=(\mu, q) \in \Masses \times Q$, and any $\tritrans \in \ThreePlansCompatAlg(\nexxt\gamma)$,
    \begin{equation}
        \label{eq:extensions:descent}
        0 \ge
        \tau \gap(\nextu; u)
        +
        \fbmetricBar{k}^{1,2}(\nextu, u; \tritrans)
        - \fbmetricBar{k}^{0,2}(\thisu, u; \tritrans)
        + \fbmetricL{k}(\thisu, \nextu; \tritrans)
        - \nexxt{\remainder}(\mu, \tritrans).
    \end{equation}
\end{lemma}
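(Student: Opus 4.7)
The plan is to follow the same three-ingredient structure as the proof of \cref{lemma:fb:fejer}, but with a primal-dual three-point identity for the $\UWave$-norm absorbing the non-measure variable $q$, and with careful bookkeeping to let the skew-symmetric term $\extXi$ reassemble into the Lagrangian gap. Since $\this v, \this{\breve v}$ from \eqref{eq:extensions:v} still satisfy the transport step \eqref{eq:fb:oc0:transport}, I can apply \cref{thm:unbalanced:twocost-three-point} and then \cref{lemma:fb:rearrangements} to the measure component exactly as before; the only conceptual change is that the ``curvature'' in the rearrangement is now governed by $B_{\PPredual[\extF'(\thisu)+\extXi\thisu]}$, which by the linearity of Bregman divergences in the generator splits into a $\curvature_{\extF}$ piece and a $\PPredual\extXi$ piece absorbed into the effective curvature. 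The output of this step is a bound of the form $\bar V^{0,2} - \bar V^{0,1} \ge \bar V^{1,2} + \tau[\text{curvature terms}] - \tau\dualprod{\this{\breve v} + \nexxt w}{\mu - \nexxt\mu} - \tau\ellRemainder\pi_\#^{0,1}\abs{\tritrans}(c_2) - \nexxt\remainder(\tritrans, \mu)$.

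For the $q$-component, I will use \eqref{eq:extensions:non-measure-oc} to extract $w_q \in \subdiff \GQ(\nexxt q)$ satisfying $\tau[P_{Q_*}\extF'(\this{\breve u}) + w_q + P_{Q_*}\extXi \nextu] = - P_{Q_*}\UWave(\nextu - \thisu)$, apply convexity of $\GQ$ to get $\tau[\GQ(q) - \GQ(\nextq)] \ge \tau\dualprod{w_q}{q - \nextq}$, and then invoke the standard three-point identity for the self-adjoint positive-semidefinite preconditioner, $\dualprod{\UWave(\nextu - \thisu)}{u - \nextu} = \tfrac{1}{2}\norm{\thisu - u}^2_{\UWave} - \tfrac{1}{2}\norm{\nextu - u}^2_{\UWave} - \tfrac{1}{2}\norm{\thisu - \nextu}^2_{\UWave}$. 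The compatibility condition \eqref{eq:extensions:xi-wave-meas-compat} is crucial here: it ensures that the $\UWave$-pairing on the measure component vanishes so that the $\UWave$-norm decompositions lift cleanly from $Q$ to the full product space, consistent with the definitions \eqref{eq:extensions:metrics}.

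For the forward terms, I will combine convexity of $\extF$ with \cref{ass:extensions:all}\,\cref{item:extensions:all:f} in the manner of \eqref{eq:fb:descent:fg-est}, obtaining
\[
    [\extF](u) - [\extF](\nextu) \ge \dualprod{\extF'(\this{\breve u})}{u - \nextu} + B_{\extF}(\this{\breve u}, u) - V_{\ell c_2, LE}(\thismu,\nextmu;\nexxt\gamma) - \tfrac{1}{2\tau}\norm{\thisu - \nextu}^2_{\UWave},
\]
dropping the non-negative Bregman term. Assembly then proceeds by adding together the measure-side inequality (multiplied by $\inv\tau$ as needed), the $q$-side convex inequality, the convex $\GMeas$ inequality via the $\nexxt w$ from \eqref{eq:sub:marginalX:lower}, and the $\extF$ descent bound, so that the pairings $\dualprod{\PPredual\extF'(\this{\breve u})}{\mu - \nextmu}$ and $\dualprod{P_{Q_*}\extF'(\this{\breve u})}{q - \nextq}$ fuse into $\dualprod{\extF'(\this{\breve u})}{u - \nextu}$ which cancels against the convexity lower bound on $\extF$. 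The $\UWave$-norm differences from the three-point identity match those appearing in $\fbmetricBar{k}^{i,2}$ and $\fbmetricL{k}$ according to \eqref{eq:extensions:metrics}.

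The main obstacle is the accounting for $\extXi$: its contributions appear through $\this v, \this{\breve v}$ (on the measure side), through $P_{Q_*}\extXi \nextu$ in \eqref{eq:extensions:non-measure-oc} (on the $q$ side), and they must recombine by skew-symmetry into the single gap term $-\tau\dualprod{\extXi \nextu}{u}$. I expect this to work because, after using \eqref{eq:extensions:xi-wave-meas-compat} to rewrite the $\tau\extXi$ effect on the measure part as being carried entirely by $\this{\breve v}$, the cross pairings align with those on the $q$-side to form $\tau\dualprod{\extXi\nextu}{u - \nextu}$; the $\dualprod{\extXi \nextu}{\nextu}$ part vanishes by skew-symmetry $\extXi_* = -\extXi$, leaving exactly the $-\tau\dualprod{\extXi\nextu}{u}$ required by the definition of $\gap(\nextu; u)$.
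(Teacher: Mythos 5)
Your proposal follows the paper's proof in all essentials: the gap decomposition via convexity of $\extG$, the Bregman three-point identity for $\extF$, and skew-symmetry of $\extXi$ (giving $\dualprod{\extXi\nextu}{\nextu}=0$); the use of \eqref{eq:extensions:v}, \eqref{eq:extensions:non-measure-oc}, and \eqref{eq:extensions:xi-wave-meas-compat} to convert the forward pairings into $\tau\this{\breve v}$ plus a $\UWave$-term; the Pythagoras identity for the self-adjoint $\UWave$; and the measure-side estimate via \cref{thm:unbalanced:twocost-three-point} combined with \cref{lemma:fb:rearrangements}, exactly as in \eqref{eq:fb:descent:1:alt}.

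One step deviates. You bound $B_{\extF}(\this{\breve u},\nextu)$ by $V_{\ell c_2, LE}(\this\mu,\nexxt\mu;\nexxt\gamma)+\tfrac{1}{2\tau}\norm{\thisu-\nextu}_{\UWave}^2$ using \cref{ass:extensions:all}\,\cref{item:extensions:all:f}, in analogy with \eqref{eq:fb:descent:fg-est}. But unlike \eqref{eq:sub:metric01}, the product-space definition \eqref{eq:extensions:metrics} of $\fbmetricL{k}$ retains the term $-\tau B_{\extF}(\breve u,\tilde u)$ verbatim; the paper therefore carries this Bregman term through unchanged and invokes the smoothness assumption only later, to show $\fbmetricL{k}\ge 0$. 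Substituting the upper bound inside the proof makes the $\UWave$-norm contributions cancel and produces a strictly smaller right-hand side, so it does not literally establish \eqref{eq:extensions:descent} as stated — although the weakened inequality would still suffice for \cref{thm:extensions:convergence:function-ergodic}. The fix is simply to not apply the smoothness bound at that point and let the definition of $\fbmetricL{k}$ absorb $B_{\extF}(\this{\breve u},\nextu)$ directly.
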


\begin{proof}[Sketch of proof]
    Let $\nexxt p \in \subdiff \GQ(\nexxt q)$ be the element for which \eqref{eq:extensions:non-measure-oc} is satisfied.
    Then $(\nexxt w, \nexxt p) \in \subdiff \extG(\nextu)$.
    Since $\dualprod{\extXi\nextu}{\nextu}=0$, minding \cref{ass:extensions:value}\,\cref{item:extensions:all:f}, using the subdifferentiability of $G$ and the Bregman three-point identity of \cref{lemma:bregman-three-point} on $\extF$, we estimate
    \begin{equation}
        \label{eq:extensions:descent:0}
        \begin{split}
        0
        &
        \ge
        \gap(\nextu; u)
        + \dualprod{P_{Q} \extF'(\this{\breve u}) + \nexxt p + P_{Q}\extXi\nextu}{q-\nexxt q}_{Q,Q}
        \\
        \MoveEqLeft[-1]
        + \dualprod{\PPredual F'(\this{\breve u}) + \nexxt w +  \PPredual\extXi\nextu}{\mu-\nexxt\mu}_{\Predual,\Masses}
        + B_{\extF}(\this{\breve u},u)
        - B_{\extF}(\this{\breve u},\nextu).
        \end{split}
    \end{equation}
    From the definitions in and after \eqref{eq:extensions:uwave}, observe that
    $
        \PPredual[\tau\extXi+\UWave]=0.
    $
    and
    $
        \PPredual\UWave (\PPredual)^*=0.
    $
    Hence, also using \cref{eq:extensions:v}, we obtain
    \begin{gather}
        \label{eq:extensions:descent:2}
        \begin{split}
        \tau \PPredual[F'(\this{\breve u}) + \extXi\nextu]
        =
        \PPredual[
            \tau F'(\this{\breve u}) + \tau \extXi\this{\breve u}
            - \UWave(\nextu-\this{\breve u})
        ]
        &
        = \tau \this{\breve v} - \PPredual\UWave(\nextu-\this{\breve u})
        \\
        &
        = \tau \this{\breve v} - \PPredual\UWave(\nextu-\thisu).
        \end{split}
    \intertext{Likewise, by \eqref{eq:extensions:non-measure-oc},}
        \label{eq:extensions:descent:3}
        \tau[P_{Q} \extF'(\this{\breve u}) + \nexxt p + P_{Q}\extXi\nextu]
        =
        - P_{Q}\UWave(\nextu-\thisu).
    \end{gather}
    Multiplying \cref{eq:extensions:descent:0} by $\tau$ and using \cref{eq:extensions:descent:2,eq:extensions:descent:3}, as well as that $B_{\extF}(\this{\breve u},u) \ge 0$ by the convexity of $F$, we now get
    \begin{equation}
        \label{eq:extensions:descent:5}
        0
        \ge
        \tau \gap(\nextu; u)
        + \tau\dualprod{\this{\breve v} + \nexxt w}{\mu-\nexxt\mu}_{\Predual,\Masses}
        - \dualprod{\UWave(\nextu - \thisu)}{u-\nextu}_{X_*,X}
        - B_{\tau\extF}(\this{\breve u},\nextu).
    \end{equation}
    Using $\tritrans \in \ThreePlansCompatAlg(\nexxt\gamma)$, \eqref{eq:fb:oc0:transport} and \cref{ass:extensions:value}\,\cref{item:extensions:all:e,item:extensions:all:f} with \cref{thm:unbalanced:twocost-three-point}, we prove, exactly as \eqref{eq:fb:descent:1:alt} in the proof of \cref{lemma:fb:fejer}, that
    \[
        \begin{split}
        0
        &
        \ge
        \bar V_{\inv\theta c_2, E}^{1,2}(\nexxt\mu, \mu; \tritrans)
        - \bar V_{\inv\theta c_2, E}^{0,2}(\this\mu, \mu; \tritrans)
        +\bar V^{0,1}_{[\inv\theta -\tau\ellCurvature]c_2, E}(\this\mu, \nexxt\mu; \tritrans)
        \\
        \MoveEqLeft[-1]
        - \tau\dualprod{\this{\breve v}+\nexxt{w}}{\mu - \nexxt\mu}
        - \nexxt\remainder(\mu, \tritrans).
        \end{split}
    \]
    Using in \eqref{eq:extensions:descent:5} this inequality together with the Pythagoras' identity for the $\UWave$-semi-norm, and the definitions \eqref{eq:extensions:metrics}, we obtain the claim.
\end{proof}

Now, similarly to \cref{thm:sub:convergence:function-ergodic}, we obtain an ergodic convergence result.

\begin{theorem}[Ergodic gap convergence]
    \label{thm:extensions:convergence:function-ergodic}
    Suppose \cref{ass:extensions:value} holds.
    Generate $\{\nextu = (\nexxt\mu, \nexxt\gamma)\}_{k \in \N}$ through the satisfaction of \cref{eq:fb:oc0,eq:sub:remainder,eq:extensions:alg} for an initial $u^0 = (\mu^0, q^0) \in \Masses \times Q$.
    Pick $N \in \N$ and $\tritrans^1 \in \ThreePlansCompatAlg(\gamma^1)$.
    Then for $\tilde u^N \defeq \frac{1}{N}\sum_{k=0}^{N-1}u^{k+1}$ and any $\optu=(\opt\mu,\opt q)$, we have
    \begin{equation}
        \label{eq:extensions:convergence:function-ergodic}
        \gap(\tilde u^N; \optu)
        \le
        \frac{1}{\tau N} \fbmetricBar{0}^{0,2}(u^0, \optu; \tritrans^1)
        + \frac{\constCurvature}{N}
        + \frac{C_{\opt\mu}}{\tau N} \sum_{k=0}^{N-1} \nexxt\epsilon.
    \end{equation}
    In particular, if $\optx$ solves \eqref{eq:extensions:problem}, then $\gap(\tilde u^N; \optu) \to 0$ at the rate $O(1/N)$.
\end{theorem}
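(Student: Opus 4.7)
The plan is to mimic the proof of \cref{thm:sub:convergence:function-ergodic}, substituting \cref{lemma:extensions:descent} for \cref{lemma:fb:fejer} and measuring descent via the Lagrangian gap functional $\gap(\freevar; \optu)$ in place of the primal value $[F+G]$. The new distances \eqref{eq:extensions:metrics} simply augment the previous ones by the $\UWave$-quadratic term, which is non-negative by \cref{ass:extensions:all}\,\cref{item:extensions:all:xi}, so the analogues of \cref{lemma:sub:v-lower-bound}\,\cref{item:sub:v-lower-bound:opt,item:sub:v-lower-bound:iter} still give $\fbmetricL{k}(\thisu, \nextu; \nexxt\tritrans) \ge 0$ and $\fbmetricBar{k}^{0,2}(\thisu, \optu; \nexxt\tritrans) \ge -\constCurvature$, using \cref{ass:extensions:all}\,\cref{item:extensions:all:tau} together with the curvature bounds \eqref{eq:sub:ocX:curvature} and \eqref{eq:sub:ocX:curvature-z}.

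First I would invoke \cref{lemma:fb:triplan-existence} inductively to produce, for each $k \ge 1$, some $\nexxt\tritrans \in \ThreePlansCompatAlg(\nexxt\gamma) \isect \ThreePlansNext(\this\tritrans)$ (with $\tritrans^1$ given). Then \cref{lemma:extensions:descent} applied with $u=\optu$ and $\tritrans=\nexxt\tritrans$ yields
\[
    \tau\gap(\nextu; \optu)
    \ge
    \fbmetricBar{k}^{1,2}(\nextu, \optu; \nexxt\tritrans)
    - \fbmetricBar{k}^{0,2}(\thisu, \optu; \nexxt\tritrans)
    + \fbmetricL{k}(\thisu, \nextu; \nexxt\tritrans)
    - \nexxt\remainder(\nexxt\tritrans, \optu).
\]
The rebalancing in \cref{ass:extensions:all}\,\cref{item:extensions:all:rebalancing} upgrades the first term to $\fbmetricBar{k+1}^{0,2}(\nextu, \optu; \tritrans^{k+2})$, and dropping the non-negative $\fbmetricL{k}$ term leaves a telescoping inequality. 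Summing over $k=0,\ldots,N-1$ and bounding $\nexxt\remainder(\nexxt\tritrans, \optu) \le C_{\optu}\nexxt\epsilon$ via the ergodic remainder condition \eqref{eq:sub:ocX:remainder}, I would obtain
\[
    \tau\sum_{k=0}^{N-1} \gap(u^{k+1}; \optu)
    \le
    \fbmetricBar{0}^{0,2}(u^0, \optu; \tritrans^1)
    - \fbmetricBar{N}^{0,2}(u^N, \optu; \tritrans^{N+1})
    + C_{\optu}\sum_{k=0}^{N-1} \nexxt\epsilon.
\]
Using $\fbmetricBar{N}^{0,2}(u^N, \optu; \tritrans^{N+1}) \ge -\constCurvature$ and dividing by $\tau N$ would give \eqref{eq:extensions:convergence:function-ergodic} after an application of Jensen's inequality, which reduces $\frac{1}{N}\sum_k \gap(u^{k+1};\optu) \ge \gap(\tilde u^N; \optu)$ to the convexity of $u \mapsto \gap(u;\optu)$. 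This convexity is immediate: $\extF + \extG$ is convex by \cref{ass:extensions:all}\,\cref{item:extensions:all:g,item:extensions:all:f}, while $u \mapsto \dualprod{\extXi u}{\optu}$ is linear. Finally, if $\optu$ solves \eqref{eq:extensions:problem} then $\gap(\freevar;\optu) \ge 0$, so combining with \cref{ass:extensions:all}\,\cref{item:extensions:all:tolerances} yields the $O(1/N)$ rate.

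The main obstacle is purely bookkeeping: verifying that the Pythagoras'-style identity for $\UWave$ invoked inside \cref{lemma:extensions:descent} threads correctly through the proof without interfering with the transport bookkeeping, and checking that the sign convention on $\nexxt\remainder$ with $u=\optu$ matches that in \eqref{eq:sub:ocX:remainder}. Both are resolved by the already-established \cref{lemma:extensions:descent} and by the compatibility \eqref{eq:extensions:xi-wave-meas-compat} that makes $\UWave$ act trivially on the measure component, so no genuinely new estimates are required beyond those already assembled in \cref{sec:sub:weaving}.
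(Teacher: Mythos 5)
Your proposal is correct and follows essentially the same route as the paper's (sketched) proof: apply \cref{lemma:extensions:descent} with $u=\optu$, bound the distance terms as in \cref{lemma:sub:v-lower-bound}, use the rebalancing assumption and \eqref{eq:sub:ocX:remainder}, telescope, and conclude via Jensen and $\gap(\freevar;\optu)\ge 0$. You usefully make explicit the convexity of $u \mapsto \gap(u;\optu)$ needed for the Jensen step, which the paper leaves implicit.
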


\begin{proof}[Sketch of proof]
    The proof is analogous to \cref{thm:sub:convergence:function-ergodic}.
    We apply \cref{lemma:extensions:descent}, bounding the distance terms similarly to \cref{lemma:sub:v-lower-bound} with \cref{ass:extensions:value}\,\cref{item:extensions:all:f,item:extensions:all:tau}, \cref{item:fb:subdiff:curvature}.
    Therefore, using \eqref{eq:sub:remainder}, the rebalancing \cref{ass:extensions:value}\,\cref{item:extensions:all:rebalancing}, and telescoping, we obtain \eqref{eq:extensions:convergence:function-ergodic}.
    For the final claim, we use \eqref{eq:extensions:convergence:function-ergodic} and the fact that $\gap(\freevar; \optu) \ge 0$ for $\optu$ solving \eqref{eq:extensions:problem}.
\end{proof}

\subsection{Primal-dual step lengths}
\label{sec:extensions:step-lengths}

We now verify \cref{ass:extensions:value} based on simpler step length conditions in the case $E=E_\Wave$ for a self-adjoint $\Wave \in \linear(\Masses; \DiffPredual)$, and $K_\mu=0$, as follows:
\begin{itemize}\item[\cref{item:extensions:all:e}] of \cref{ass:extensions:value} holds as $E_\Wave$ is a Bregman divergence; see the discussion after \eqref{eq:extensions:problem0}.
    \item[\cref{item:extensions:all:f}] we prove in \cref{lemma:extensions:spdps:step-lengths} below, subject to $\sigma_p,\sigma_d>0$ satisfying $\sigma_p L_z + \sigma_p\sigma_d\norm{K_z}^2 <1$.
    \item[\cref{item:extensions:all:tau}] is easy to satisfy through the choice of the primal step length $\tau$, the transport step length $\theta$, given a choice of $\ellCurvature>0$, as as well as $L,\ell>0$ from \cref{item:extensions:all:f}.
    \item[\cref{item:extensions:all:tolerances}] is satisfied through appropriate choice of the tolerances, e.g., $\epsilon^k=1/k^\eta$ for some $\eta>0$.
    \item[\cref{item:extensions:all:rebalancing}] holds similarly to \cref{lemma:fb:rebalancing}.
\end{itemize}
Aside from \cref{item:extensions:all:f}, these arguments also hold when $K_\mu \ne 0$.
The extension of the next lemma for \cref{item:extensions:all:f} to $K_\mu$ is also possible, but for simplicity of presentation, we consider only this case, as it is sufficient for our numerical experiments.
The condition \eqref{eq:extensions:spdps:step-lengths:cond} there is a simplified smoothness condition, which can be verified for $F(\mu, z)=\frac{1}{2}\norm{A\mu+z-b}^2$, similarly to \cref{ex:fb:assumptions-quadratic}.
As a result, we obtain a simple control on the step length parameters $\sigma_p$ and $\sigma_d$.

\begin{lemma}\label{lemma:extensions:spdps:step-lengths}
    Assume the setup of \cref{alg:extensions:spdps} with $\extF: U \to \R$ convex and pre-differentiable with $\PPredual F'(u) \in \DiffPredual$ for all $u \in U$.
    For simplicity, take $K_\mu=0$.
    Suppose for some $\ell, L, L_z \ge 0$ that
    \begin{equation}
        \label{eq:extensions:spdps:step-lengths:cond}
        B_{F}((\mu + (\pi_\#^1-\pi_\#^0)\gamma, z), (\nu, w))
        \le
        V_{\ell c_2, L E}(\mu, \nu; \gamma)
        +
        \frac{L_z}{2}\norm{z-w}_Z^2
    \end{equation}
    for all $\mu, \nu \in \Masses$; $\gamma \in \TwoPlansSpace$; and $(z, w) \in Z$.
    Then  \cref{ass:extensions:value}\,\cref{item:extensions:all:f} holds if the step length parameters $\sigma_p,\sigma_d>0$ satisfy $\sigma_p L_z + \sigma_p\sigma_d\norm{K_z}^2 <1$.
\end{lemma}

\begin{proof}Recall the definition of $\UWave$ in \eqref{eq:extensions:uwave}.
    Let $\beta \defeq \sigma_p\sigma_d\norm{K_z}^2/(1 - \sigma_p L_z)$.
    Then $\beta \in (0, 1)$.
    By several applications of Young's inequality (compare \cite[Lemma 9.12]{clasonvalkonen2020nonsmooth}), for any $x=(\mu,z,y)$,  we have
    \[
        \begin{split}
        \frac{1}{\tau}\norm{u}_{\UWave}^2
        &
        \ge
        \inv\sigma_p\norm{z}_Z^2
        - \frac{1-\beta}{\sigma_d} \norm{y}_Y^2
        + \frac{1}{\sigma_d}\norm{y}_Y^2
        - \frac{\sigma_d}{\beta} \norm{K_z z}_Y^2
        - \frac{\beta}{\sigma_d} \norm{y}_Y^2
        \\
        &
        =
        \inv\sigma_p \norm{z}^2
        - \frac{\sigma_d}{\beta}\norm{K_z z}_Y^2
        \ge L_z \norm{z}_Z^2.
        \end{split}
    \]
    Combining this with \eqref{eq:extensions:spdps:step-lengths:cond}, we obtain \cref{ass:extensions:value}\,\cref{item:extensions:all:f}.
\end{proof}

\section{Numerical experience}
\label{sec:numerical}

We now numerically treat the problem \eqref{eq:intro:problem}, following the simple convolution sensor setup of \cite{tuomov-pointsource}, as we describe in \cref{sec:numerical:experiments}, after briefly describing algorithm parametrisation in \cref{sec:numerical:details}.
We finish with a report and discussion on the performance in \cref{sec:numerical:results}.

Moreover, to demonstrate auxiliary parametrisation and the sliding PDPS of \cref{alg:extensions:spdps}, we consider the “biased” problem with TV-regularisation,
\begin{equation}
    \label{eq:numerical:problem:biased}
    \min_{\mu \in \Masses,\, z \in Z}~
        \frac{1}{2}\norm{A\mu+z-b}^2 + \alpha\norm{\mu} + \delta_{\ge 0}(\mu)
        + \lambda \norm{\grad_h z}_{2,1},
\end{equation}
where $\grad_h$ describe a discretised gradient operator from 1D-signals $Z=\R^n$ to $\R^n$ or from 2D images $Z=\R^{n_1n_2}$ to vector fields in $\linear(\R^{n_1n_2}; \R^2)$, and $\norm{\freevar}_{2,1}$ is a sum over 2-norms over each component.
The operator $A$ is as for \eqref{eq:intro:problem}.
The idea is that we observe on a sensor grid (a camera) the image $b$ that has the spikes $\mu$ superimposed over an unknown background image $z$.

In \cite{dangvalkonen2026leak}, we consider $A$ arising as the non-linear solution operator of a convection--diffusion model.

\subsection{Algorithm implementation and parametrisation}
\label{sec:numerical:details}

For the basic problem \eqref{eq:intro:problem} we implemented \cref{alg:fb:rough} (\sFBName), the algorithms from \cite{tuomov-pointsource} (\FBName~and \PDPSName) as well as the “fully corrective” conditional gradient method of \cite[Algorithm 2]{walter2019linear} (\FWfName).
Moreover, we include results for variants of the forward-backward methods with a Radon-norm-squared proximal term (\radonFBName~and \sFBradonName).
Our Rust implementations are available on Zenodo \cite{tuomov-pointsource-codes}.
The implementation also includes the inertial \FISTAName~and the “relaxed” conditional gradient method of \cite[Algorithm 5.1]{brediespikkarainen2013inverse}, but as these were not, correspondingly, better than \FBName~and \FWfName~in the experiments of \cite{tuomov-pointsource}, we do not include the results here.
For the “biased” problem \eqref{eq:numerical:problem:biased}, we implemented the sliding PDPS of \cref{alg:extensions:spdps} (\sPDPSName), as well as its non-sliding variant (\fPDPSName).
We also implemented their variants with a Radon-norm-squared proximal term (\sPDPSradonName~and \fPDPSradonName), although our theory does not show them to be convergent.

Several details of the implementation, such as the branch-and-bound method for the insertion \cref{alg:fb:insert-and-remove}, are documented in \cite{tuomov-pointsource}.
Conditional gradient methods crucially depend on merging heuristics to keep the number of spikes computationally manageable. These are also documented in \cite{tuomov-pointsource}.
Contrary to the experiments in \cite{tuomov-pointsource}, on some of the experiments, we now needed to also enable merging heuristics for \PDPSName~to avoid accumulating a high number of spikes once the method starts to be very near a solution.
The situation is the same with the new (non-sliding) \radonFBName~and \fPDPSradonName, with which we use the same objective function decrease based merging as with \FWfName.
This is possible with the weak convergence guarantees of \cref{thm:fb:subdiff-conv}, but not with \cref{thm:sub:convergence:function-ergodic,cor:sub:convergence:function}.
We do not use or need merging heuristics with the other variants of our methods, except to clean up after the final step.

Most methods use semismooth Newton (SSN) for the finite-dimensional subproblems of \cref{alg:fb:insert-and-remove}, occasionally falling back to forward-backward splitting if the Newton system is too ill-conditioned.
For the Radon norm proximal term the subproblem on \cref{step:fb:insert-and-remove-radon:sub} of \cref{alg:fb:insert-and-remove-radon} can be solved exactly in linear time.
Such an algorithm can be developed along similar lines as sorting algorithms for simplex projection (see, e.g., \cite{angerhausen2022stochastic}) or the constrained positive $\ell_1$ regularisation algorithm of \cite[Lemma D.2]{suonpera2024general}.
We provide details in the internal documentation of our implementation \cite{tuomov-pointsource-codes}.

We always take the initial iterate $\mu^0=0$. For \eqref{eq:numerical:problem:biased}, also initial $z^0=0$ and the dual iterate $y^0=0$.
Based on trial and error, we take the tolerance sequence $\epsilon_k = 0.5 \tau \alpha/(1+0.2k)^{1.4}$, where $k$ is the iteration number.
This choice balances between fast initial convergence and not slowing down later iterations too much via excessive accuracy requirements.
Moreover, as a bootstrap heuristic, on the first 10 iterations, we insert in \cref{alg:fb:insert-and-remove} at most one point irrespective of the tolerance $\epsilon_k$.
This does not affect convergence, as the convergence theory can be applied starting from any fixed iteration number.
We indicate in \cref{tab:numerical:algorithm-params,tab:numerical:algorithm-params:biased} the step length and other parameters that differ between each algorithm. For the step length parameters we indicate value relative to the maximal value, e.g.,
for \FBName~we take $\tau = \tau_0 / L$ where $\tau \in (0, 1)$ is indicated in the table, and $L$ satisfies $A_*A \le L \Wave$.
We take $\ellCurvature$ as three times the estimate of \cref{lemma:unbalanced:lip-diff-value-sensorA}.

\subsection{Experiments}
\label{sec:numerical:experiments}

\def\datapath{results_final_redo/}%

\def\getTolerance#1#2{$
    \JSONParseValue{#1}{#2.Power.initial}
    \tau\alpha/(
        1 + \JSONParseValue{#1}{#2.Power.factor}k
    )^{\JSONParseValue{#1}{#2.Power.exponent}}
$}

\def\getMerging#1#2{%
    \def\enabled{\JSONParseExpandableValue{#1}{#2.enabled}}%
    \def\interp{\JSONParseExpandableValue{#1}{#2.interp}}%
    \def\truevalue{\detokenize{true}}
    \ifthenelse{\equal{\enabled}{\truevalue}}{%
        \ifthenelse{\equal{\interp}{\truevalue}}{%
            i:$\JSONParseValue{#1}{#2.radius}$%
        }{%
            m:$\JSONParseValue{#1}{#2.radius}$%
        }%
    }{%
        no%
    }%
}

\def\OneDFastDirName{pointsource1d_fast}
\def\TwoDFastDirName{pointsource2d_fast}
\def\OneDFastBiasedDirName{pointsource1d_tv_fast}
\def\TwoDFastBiasedDirName{pointsource2d_tv_fast}
\def\OneDFastName{1D “fast”}
\def\TwoDFastName{2D “fast”}
\def\OneDFastBiasedName{1D “biased”}
\def\TwoDFastBiasedName{2D “biased”}

\def\loadsettings#1#2{%
    \edef\filename{\datapath\csname#1DirName\endcsname/\csname#2Filename\endcsname_config.json}
    \expandafter\JSONParseFromFile\csname CFG#1X#2\endcsname{\filename}%
}

\def\FBsettings#1#2{%
    \csname#2Name\endcsname
    & \expandafter\JSONParseValue\csname CFG#1X#2\endcsname{FB[0].τ0}
    &
    &
    & \expandafter\getMerging\csname CFG#1X#2\endcsname{FB[0].insertion.merging}
    & \expandafter\JSONParseValue\csname CFG#1X#2\endcsname{FB[0].insertion.inner.method}
    &
}
\def\sFBsettings#1#2{%
    \csname#2Name\endcsname
    & \expandafter\JSONParseValue\csname CFG#1X#2\endcsname{SlidingFB[0].τ0}
    & \expandafter\JSONParseValue\csname CFG#1X#2\endcsname{SlidingFB[0].transport.θ0}
    &
    & \expandafter\getMerging\csname CFG#1X#2\endcsname{SlidingFB[0].insertion.merging}
    & \expandafter\JSONParseValue\csname CFG#1X#2\endcsname{SlidingFB[0].insertion.inner.method}
    & \expandafter\JSONParseValue\csname CFG#1X#2\endcsname{SlidingFB[0].transport.tolerance_mult}
}
\def\PDPSsettings#1#2{%
    \csname#2Name\endcsname
    & \expandafter\JSONParseValue\csname CFG#1X#2\endcsname{PDPS[0].τ0}
    &
    & \expandafter\JSONParseValue\csname CFG#1X#2\endcsname{PDPS[0].σ0}
    & \expandafter\getMerging\csname CFG#1X#2\endcsname{PDPS[0].generic.merging}
    & \expandafter\JSONParseValue\csname CFG#1X#2\endcsname{PDPS[0].generic.inner.method}
    &
}
\def\FWsettings#1#2{%
    \csname#2Name\endcsname
    &
    &
    &
    & \expandafter\getMerging\csname CFG#1X#2\endcsname{FW.merging}
    & \expandafter\JSONParseValue\csname CFG#1X#2\endcsname{FW.inner.method}
    &
}
\def\sPDPSsettings#1#2{%
    \csname#2Name\endcsname
    & \expandafter\JSONParseValue\csname CFG#1X#2\endcsname{SlidingPDPS[0].τ0}
    & \expandafter\JSONParseValue\csname CFG#1X#2\endcsname{SlidingPDPS[0].transport.θ0}
    & \expandafter\JSONParseValue\csname CFG#1X#2\endcsname{SlidingPDPS[0].σp0}
    & \expandafter\JSONParseValue\csname CFG#1X#2\endcsname{SlidingPDPS[0].σd0}
    & \expandafter\getMerging\csname CFG#1X#2\endcsname{SlidingPDPS[0].insertion.merging}
    & \expandafter\JSONParseValue\csname CFG#1X#2\endcsname{SlidingPDPS[0].insertion.inner.method}
    & \expandafter\JSONParseValue\csname CFG#1X#2\endcsname{SlidingPDPS[0].transport.tolerance_mult}
}
\def\fPDPSsettings#1#2{%
    \csname#2Name\endcsname
    & \expandafter\JSONParseValue\csname CFG#1X#2\endcsname{ForwardPDPS[0].τ0}
    & \expandafter\JSONParseValue\csname CFG#1X#2\endcsname{ForwardPDPS[0].transport.θ0}
    & \expandafter\JSONParseValue\csname CFG#1X#2\endcsname{ForwardPDPS[0].σp0}
    & \expandafter\JSONParseValue\csname CFG#1X#2\endcsname{ForwardPDPS[0].σd0}
    & \expandafter\getMerging\csname CFG#1X#2\endcsname{ForwardPDPS[0].insertion.merging}
    & \expandafter\JSONParseValue\csname CFG#1X#2\endcsname{ForwardPDPS[0].insertion.inner.method}
    & \expandafter\JSONParseValue\csname CFG#1X#2\endcsname{ForwardPDPS[0].transport.tolerance_mult}
}

\begin{table}[t]
    \caption{Algorithm parametrisations for the basic experiments.
        Empty fields are not applicable to the algorithm in question.
        For merging, “i:$\rho$” indicates that weighted interpolation is used to form a new spike to replace the merged spikes, with $\rho$ the candidate search radius, while “m:$\rho$” indicates that merging moves mass between the merged spikes.
        The transport tolerance multiplier $\Ctrans$ is explained in \cref{lemma:fb:residual-bound:radon}.
    }
    \label{tab:numerical:algorithm-params}
    \tikzifexternalizing{}{%
        \centering
        \small
        \foreach \experiment in {OneDFast, TwoDFast} {
            \foreach \alg in {FB, PDPS, FWf, sFB, radonFB, sFBradon} {
                \loadsettings{\experiment}{\alg}
            }
            \subcaptionbox{\csname\experiment Name\endcsname~ experiment}{
                \begin{tabular}{lrrrrrrr}
                    \toprule
                    Method
                    & $\tau_0$
                    & $\theta_0$
                    & $\sigma_0$
                    & merging
                    & inner method
                    & $\Ctrans$
                    \\
                    \midrule
                    \FBsettings{\experiment}{FB}
                    \\
                    \PDPSsettings{\experiment}{PDPS}
                    \\
                    \FWsettings{\experiment}{FWf}
                    \\
                    \sFBsettings{\experiment}{sFB}
                    \\
                    \FBsettings{\experiment}{radonFB}
                    \\
                    \sFBsettings{\experiment}{sFBradon}
                    \\
                    \bottomrule
                \end{tabular}%
            }\\
            \medskip
        }
    }
\end{table}

\begin{table}[t]
    \caption{Algorithm parametrisations for the “biased” experiments.
        For merging, “m:$\rho$” indicates that merging moves mass between the merged spikes,
        with $\rho$ the candidate search radius.
        The transport tolerance multiplier $\Ctrans$ operates similarly to \cref{lemma:fb:residual-bound:radon}.
    }
    \label{tab:numerical:algorithm-params:biased}
    \tikzifexternalizing{}{%
        \centering
        \small
        \foreach \experiment in {OneDFastBiased, TwoDFastBiased} {
            \foreach \alg in {sPDPS, fPDPS, sPDPSradon, fPDPSradon} {
                \loadsettings{\experiment}{\alg}
            }
            \medskip
            \subcaptionbox{\csname\experiment Name\endcsname~ experiment}{
                \begin{tabular}{lrrrrrrrr}
                    \toprule
                    Method
                    & $\tau_0$
                    & $\theta_0$
                    & $\sigma_{p,0}$
                    & $\sigma_{d,0}$
                    & merging
                    & inner method
                    & $\Ctrans$
                    \\
                    \midrule
                    \sPDPSsettings{\experiment}{sPDPS}
                    \\
                    \fPDPSsettings{\experiment}{fPDPS}
                    \\
                    \sPDPSsettings{\experiment}{sPDPSradon}
                    \\
                    \fPDPSsettings{\experiment}{fPDPSradon}
                    \\
                    \bottomrule
                \end{tabular}%
            }\\
            \medskip
        }
    }
\end{table}

We use the squared data term $F(\mu)=\frac{1}{2}\norm{A\mu-b}^2$ in both $\Omega=[0, 1]$ and $\Omega=[0, 1]^2$.
For the forward operator, following the construction in \cite[Theorem 3.3]{tuomov-pointsource}, we take $A \in \linear(\Masses; \R^n)$ defined by $[A\mu]_i = \mu(\theta_i * \psi)$, ($i=1,\ldots,n$), where each sensor $i$ on a uniform grid with centres $z_i$ has the field-of-view $\theta_i(x)=\chi_{[-r,r]^n}(x-z_i)$.
In $[0, 1]$ we use 100, and in $[0, 1]^2$ we use $16 \times 16$ equally spaced sensors with $r$ the sensor spacing times 0.4.\footnote{%
    The small number of sensors along each axis in 2D is for visualisation purposes: quadrupling sensor count to a grid of $32 \times 32$, only doubles CPU time, indicating good scaling behaviour of our implementation.
}
For the physical spread $\psi$, we consider
the “fast” (compactly supported, differentiable, third-order polynomial) spread of \cite[Example 3.4]{tuomov-pointsource}.
We do not consider the “cut Gaussian” spread, as this is not differentiable.\footnote{Our implementation \cite{tuomov-pointsource-codes} includes a dynamic estimation workaround to the lack of Lipschitz gradient estimates of the dual variables, but the performance of the sliding methods is unpredictable due to the unsatisfied assumption.}
We take the kernel $\rho$ for the particle-to-wave operator $\Wave=\rho*$ from the same example.
The standard deviations and other parameters of the spread are as in the numerical experiments of \cite{tuomov-pointsource}.
To generate the synthetic measurement data $b$, we apply $A$ to a ground-truth measure $\hat\mu$ with four spikes of distinct magnitudes.
For the “biased” problem \eqref{eq:numerical:problem:biased} we additionally add to this the ground-truth bias $\hat z$ formed by summing the weighted indicator functions of two intervals (1D) or Euclidean balls (2D). The details can be found in our implementation \cite{tuomov-pointsource-codes}.
Then we add to each sensor reading independent Gaussian noise.
The noise level (standard deviation and resulting SSNR) and corresponding regularisation parameters, found by trial-and-error, are listed in \cref{tab:numerical:experiment-params} for each experiment.

\def\loadexperiment#1{%
    \edef\filename{\datapath\csname#1DirName\endcsname/experiment.json}%
    \edef\stats{\datapath\csname#1DirName\endcsname/stats.json}%
    \expandafter\JSONParseFromFile\csname EXP#1\endcsname{\filename}%
    \expandafter\JSONParseFromFile\csname STATS#1\endcsname{\stats}%
}
\def\getssnr#1#2{%
    \def\round##1{\num[group-minimum-digits=2,round-mode=places]{##1}}%
    \round{\JSONParseExpandableValue{#1}{#2}}
}
\def\BasicExperiment#1{%
    \csname#1Name\endcsname
    & \expandafter\JSONParseValue\csname EXP#1\endcsname{noise_distr.std_dev}
    & \expandafter\getssnr\csname STATS#1\endcsname{ssnr}
    & \expandafter\JSONParseValue\csname EXP#1\endcsname{regularisation.NonnegRadon}
    &
}
\def\BiasedExperiment#1{%
    \csname#1Name\endcsname
    & \expandafter\JSONParseValue\csname EXP#1\endcsname{base.noise_distr.std_dev}
    & \expandafter\getssnr\csname STATS#1\endcsname{ssnr}
    & \expandafter\JSONParseValue\csname EXP#1\endcsname{base.regularisation.NonnegRadon}
    & \expandafter\JSONParseValue\csname EXP#1\endcsname{λ}
}

\begin{table}[t]
    \caption{Experiment noise levels and regularisation parameters.
        Empty fields are not applicable to the experiment in question.}
    \label{tab:numerical:experiment-params}
    \tikzifexternalizing{}{%
        \foreach \experiment in {OneDFast, TwoDFast, OneDFastBiased, TwoDFastBiased} {
            \loadexperiment{\experiment}
        }
        \centering
        \small
        \begin{tabular}{lrrrr}
            \toprule
            Experiment & std.dev.~ & SNR (dB) & $\alpha$ & $\lambda$
            \\
            \midrule
            \BasicExperiment{OneDFast}
            \\
            \BasicExperiment{TwoDFast}
            \\
            \BiasedExperiment{OneDFastBiased}
            \\
            \BiasedExperiment{TwoDFastBiased}
            \\
            \bottomrule
        \end{tabular}
    }
\end{table}

\def\alglist{{FWf, FB, PDPS, radonFB, sFBradon, sFB}}

\begin{figure}[t!]
    \plotsONEd{pointsource1d_fast}
    \caption{%
        Reconstructions and performance on 1D problem with “fast” spread.
        \textbf{Top:} reconstruction and original data.
        The measurement data magnitude scale is on the right, spike magnitude on the left.
        \textbf{Middle:} Function value in terms of iteration count (left) and CPU time (right).
        \textbf{Bottom:} Spike evolution, inner iteration count (left), and kernels (right).
        The thick lines indicate the spike count, and the thinner and dimmer lines the inner iteration count.
    }
    \label{fig:1dproblem_fast}
\end{figure}

\begin{figure}[t!]
    \plotsTWOd{pointsource2d_fast}
    \caption{%
        Reconstructions and performance on 2D problem with “fast” spread.
        \textbf{Top:} reconstruction and original data.
        The pixel intensities indicate the measurement data, while the area of the top surface of the boxes on top of each pixel is proportional the noise level. Their colour additionally indicates the sign of the noise.
        \textbf{Middle:} Function value in terms of iteration count and CPU time.
        \textbf{Bottom:} Spike and inner iteration count evolution, and kernels.
    }
    \label{fig:2dproblem_fast}
\end{figure}

\def\alglist{{fPDPSradon, sPDPSradon, fPDPS, sPDPS}}

\begin{figure}[t!]
    \plotsONEdTV{pointsource1d_tv_fast}
    \caption{%
        Reconstructions and performance on 1D problem with “fast” spread and TV-regularised bias.
        \textbf{Top:} reconstruction and original data.
        The measurement data magnitude scale is on the right, spike magnitude on the left.
        \textbf{Middle:} Function value in terms of iteration count (left) and CPU time (right).
        \textbf{Bottom:} Spike evolution, inner iteration count (left), and kernels (right).
        The thick lines indicate the spike count, and the thinner and dimmer lines the inner iteration count.
    }
    \label{fig:1dproblem_tv_fast}
\end{figure}

\begin{figure}[t!]
    \plotsTWOdTV{pointsource2d_tv_fast}
    \caption{%
        Reconstructions and performance on 2D problem with “fast” spread and TV-regularised bias.
        \textbf{Top:} reconstruction and original data. The upper layer displays the data, noise level, and spike reconstructions, while the lower layer displays the bias and its reconstruction by \sPDPSName.
        The pixel intensities indicate the measurement data, while the area of the top surface of the boxes on top of each pixel is proportional the noise level. Their colour additionally indicates the sign of the noise.
        \textbf{Middle:} Function value in terms of iteration count and CPU time.
        \textbf{Bottom:} Spike and inner iteration count evolution, and kernels.
    }
    \label{fig:2dproblem_tv_fast}
\end{figure}

\subsection{Results}
\label{sec:numerical:results}

We ran the experiments on a 2020 MacBook Air M1 with 16GB of memory.
We take advantage of the 4 high performance CPU cores of the 8-core machine by using 4 parallel computational threads to calculate $A^*z$ and for the branch-and-bound optimisation.
We report the performance of all the algorithms applicable to each experiment in the corresponding \cref{fig:1dproblem_fast,fig:2dproblem_fast,fig:1dproblem_tv_fast,fig:2dproblem_tv_fast}.
Each of the figures depicts the spread $\psi$, kernel $\rho$, and sensor $\theta$ involved in $A$ and $\Wave$. They also depict the noisy and noise-free data, the ground-truth measure $\hat\mu$, and the algorithmic reconstructions. For \eqref{eq:numerical:problem:biased} also the ground-truth bias $\hat z$ and its reconstruction by sPDPS is included.
The reconstructions or optimal solutions to the problems \cref{eq:fb:problem,eq:numerical:problem:biased} cannot be expected to equal $\hat\mu$ and $\hat z$ due to noise and ill-conditioning of the inverse problem $A\mu+z=b$.

Each of the figures plots against both the iteration count and the spent CPU time, the relative error of the function value,
\[
    e^k = \frac{v(x^k)-v(x_{\min})}{v(x^0)-v(x_{\min})},
\]
where $v$ is the objective of \eqref{eq:intro:problem} and $x^k=\mu^k$ (or \eqref{eq:numerical:problem:biased} and $x^k=(\mu^k, z^k)$), and $v(x_{\min})$ is the minimum objective value observed over all algorithms. (Since the relative error is zero for such an iterate, it will not be shown in the logarithmic plots.)
The plots are logarithmic on both axes, and we sample the reported values logarithmically only on iterations $1,2,\ldots,10,20,\ldots,100,200,\ldots$.
We limit the number of iterations to 4000. The CPU time is a sum over the time spent by each computational thread, so several times the clock time requirement.

The figures also indicate the spike count evolution, and the number of iterations needed to solve the finite-dimensional subproblems. The subproblem iteration counts are averages over the corresponding period.

\subsection{Comparison}

Studying \cref{fig:1dproblem_fast,fig:2dproblem_fast}, we notice the sliding $\Wave$-(semi)norm proximal term {\sFBName} as well as the Radon-norm proximal term {\sFBradonName} to exhibit significant performance advantages over the comparison methods. The $\Wave$-(semi)norm usually provides a slightly faster algorithm, as is to be expected from both being more tailored to the problem, as well as from the worse convergence guarantees that we are able to obtain.
Likewise, for the biased problem, \cref{fig:1dproblem_tv_fast,fig:2dproblem_tv_fast} indicate the sliding {\sPDPSName} to overperform the other methods variants, with the Radon-norm proximal term exhibiting noticeably slower convergence than the $\Wave$-norm.
After all, we have not been able to show the convergence of a primal-dual method based on this marginal energy.
We were only able to show (\cref{sec:extensions:convergence.fb}) that a forward-backwrd method works with auxiliary variables, when combined with the Radon norm marginal energy.
In \cite{dangvalkonen2026leak} we evalute the performance of such a method on a convection-diffusion problem.

\subsection{Conclusion}

Overall, the numerical results confirm our intuition that proximal terms based on metrisation of the weak-$*$ topology---in particular, conventional optimal transport distances, but also particle-to-wave or MMD norms--improve the performance of optimisation methods in measure spaces.
Lipschitz properties of the data term $F$, can, however, be more easily verified for proximal terms based on the Radon norm, as well as for conditional gradient methods. An advantage of our approach, compared to the latter, is its flexibility, allowing easily to treat more complex problems through primal-dual methods, product spaces, and nonconvex objectives.

\input{pointsource.xbbl}

\appendix

\section{Auxiliary lemmas}

We recall a fundamental three-point identity of Bregman divergences.

\begin{lemma}\label{lemma:bregman-three-point}
    Let $J: \Meas(\Omega) \to \extR$ be convex, proper, and weakly-$*$ lower semicontinuous.
    Take $\nu_0,\nu_1,\nu_2 \in \Meas(\Omega)$, as well as $\omega_0 \in \subdiff J(\nu_0)$ and $\omega_1 \in \subdiff J(\nu_1)$. Then
    \[
        B_J^{\omega_0}(\nu_0,\nu_2)-B_J^{\omega_0}(\nu_0,\nu_1)
        =
        B_J^{\omega_1}(\nu_1,\nu_2)+\dualprod{\omega_1-\omega_0}{\nu_2-\nu_1}.
    \]
\end{lemma}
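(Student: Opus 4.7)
The plan is to verify the identity by direct algebraic expansion of both sides using the definition of the Bregman divergence, namely
\[
    B_J^{\omega}(\nu,\tilde\nu) = J(\tilde\nu) - J(\nu) - \dualprod{\omega}{\tilde\nu - \nu}
\]
as given in \eqref{eq:unbalanced:ep}. No further machinery is needed; the identity is essentially a bookkeeping exercise, and the hypotheses on $J$ enter only to the extent that they guarantee $\omega_0 \in \subdiff J(\nu_0)$ and $\omega_1 \in \subdiff J(\nu_1)$ are legitimate so that the two Bregman divergences $B_J^{\omega_0}(\nu_0,\freevar)$ and $B_J^{\omega_1}(\nu_1,\freevar)$ are finite in the sense of \eqref{eq:unbalanced:ep}.

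First I would expand the left-hand side. Writing out both $B_J^{\omega_0}(\nu_0,\nu_2)$ and $B_J^{\omega_0}(\nu_0,\nu_1)$ from the definition and subtracting, the terms $-J(\nu_0)$ and $\dualprod{\omega_0}{\nu_0}$ cancel, so that
\[
    B_J^{\omega_0}(\nu_0,\nu_2) - B_J^{\omega_0}(\nu_0,\nu_1) = J(\nu_2) - J(\nu_1) - \dualprod{\omega_0}{\nu_2-\nu_1}.
\]

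Then I would expand the right-hand side in the same way. By the definition of $B_J^{\omega_1}(\nu_1,\nu_2)$ and bilinearity of the dual pairing,
\[
    B_J^{\omega_1}(\nu_1,\nu_2) + \dualprod{\omega_1-\omega_0}{\nu_2-\nu_1}
    = J(\nu_2) - J(\nu_1) - \dualprod{\omega_1}{\nu_2-\nu_1} + \dualprod{\omega_1-\omega_0}{\nu_2-\nu_1}.
\]
The $\omega_1$ contributions cancel, yielding $J(\nu_2) - J(\nu_1) - \dualprod{\omega_0}{\nu_2-\nu_1}$, which matches the left-hand side and establishes the claim.

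There is no real obstacle in this argument: the identity is purely algebraic, with the convexity, properness, and lower semicontinuity of $J$ serving only to make the definition of $B_J^{\omega_i}$ meaningful. Consequently the proof can be presented in a few lines as a direct computation, which is how I would write it up for the paper.
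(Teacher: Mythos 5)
Your proposal is correct and matches the paper's own proof, which is likewise a direct expansion from the definition \eqref{eq:unbalanced:ep}: the paper rewrites the left-hand side as $J(\nu_2)-J(\nu_1)-\dualprod{\omega_0}{\nu_2-\nu_1}$ and then adds and subtracts $\dualprod{\omega_1}{\nu_2-\nu_1}$ to regroup into the right-hand side, which is the same computation you perform by expanding both sides. No further comment is needed.
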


\begin{proof}We have
    \[
        \begin{split}
        B_J^{\omega_0}(\nu_0,\nu_2)-B_J^{\omega_0}(\nu_0,\nu_1)
        &
        =J(\nu_2) - J(\nu_1) - \dualprod{\omega_0}{\nu_2-\nu_1}
        \\
        &
        =J(\nu_2)-J(\nu_1) - \dualprod{\omega_1}{\nu_2-\nu_1}
        +\dualprod{\omega_1-\omega_0}{\nu_2-\nu_1}
        \\
        &
        =B_J^{\omega_1}(\nu_1,\nu_2)+\dualprod{\omega_1-\omega_0}{\nu_2-\nu_1}.
        \qedhere
        \end{split}
    \]
\end{proof}

\end{document}